\definecolor{grn}{rgb}{0,0.6,0}
\definecolor{mrn}{rgb}{0.3,0,0}
\definecolor{blue}{rgb}{0,0,0.7}
\definecolor{Mygray}{rgb}{0.75,0.75,0.75}
\definecolor{auburn}{rgb}{0.43, 0.21, 0.1}
\definecolor{britishracinggreen}{rgb}{0.0, 0.26, 0.15}
\definecolor{taupe}{rgb}{0.28, 0.24, 0.2}
\newtheorem{theorem}{Theorem}[section]
\newtheorem{propn}[theorem]{Proposition}
\newtheorem{cor}[theorem]{Corollary}
\newtheorem{lemma}[theorem]{Lemma}
\newtheorem{rmk}[theorem]{Remark}
\newcommand{\Z}{\mathbb{Z}}
\newcommand{\Q}{\mathbb{Q}}
\newcommand{\Mod}[1]{\ (\mathrm{mod}\ #1)}
\newcommand{\p}{\mathfrak{p}}
\newcommand{\q}{\mathfrak{q}}
\newcommand{\mr}{\mathfrak{r}}
\newcommand{\ma}{\mathfrak{a}}
\newcommand{\mb}{\mathfrak{b}}
\newcommand{\mG}{\mathfrak{G}}
\newcommand{\mK}{\mathfrak{K}}
\newcommand{\tL}{\tilde{L}}
\newcommand{\tG}{\tilde{G}}
\begin{document}
\baselineskip=14.5pt
\title[$\Z_2$-extension of some biquadratic number fields]{Stability of 2-class groups in the $\Z_2$-extension of certain real biquadratic fields }

\author{H Laxmi and Anupam Saikia}
\address[H Laxmi and Anupam Saikia]{Department of Mathematics, Indian Institute of Technology Guwahati, Guwahati - 781039, Assam, India}

\email[H Laxmi]{hlaxmi@iitg.ac.in}

\email[Anupam Saikia]{a.saikia@iitg.ac.in}
\renewcommand{\thefootnote}{}

\footnote{2020 \emph{Mathematics Subject Classification}: Primary 11R29, Secondary 11R11, 11R23.}

\footnote{\emph{Key words and phrases}: 2-class groups, Iwasawa invariants, structure of Iwasawa module, Greenberg's conjecture.}

\footnote{\emph{We confirm that all the data are included in the article.}}

\renewcommand{\thefootnote}{\arabic{footnote}}
\setcounter{footnote}{0}

\begin{abstract}
Greenberg's conjecture on the stability of $\ell$-class groups in the cyclotomic $\mathbb{Z}_{\ell}$-extension of a real field has been proven for various infinite families of real quadratic fields for the prime $\ell=2$. In this work, we consider an infinite family of real biquadratic fields $K$. With some extensive use of elementary group theoretic and class field theoretic arguments, we investigate the $2$-class groups of the $n$-th layers $K_n$ of the cyclotomic $\Z_2$-extension of $K$ and verify Greenberg's conjecture. We also relate capitulation of ideal classes of certain sub-extensions of $K_n$ to the relative sizes of the $2$-class groups.  
\end{abstract}

\maketitle
\section{Introduction}

\noindent The class group of a number field measures how far its ring of integers from being a unique factorization domain. In order to examine class groups, it is fairly common to break them as the product of their $\ell$-Sylow subgroups, also known as $\ell$-class groups, where $\ell$ is a prime, and study them individually. If $F/K$ is a Galois extension of number fields, then the class group $\mathcal{C}l_F$ of $F$ is equipped with a Galois module structure. Iwasawa examined the growth of $\ell$-class groups of number fields in an infinite tower of Galois extensions extension by viewing them as Galois modules. Given a prime number $\ell$, an infinite Galois extension $K_{\infty}$ of $K$ is said to be a $\Z_{\ell}$-extension if $\Gamma := {\rm{Gal}}(K_{\infty}/K)$ is topologically isomorphic to $\Z_{\ell}$, the group of $\ell$-adic integers. For any $n \geq 0$, there exists a unique intermediate field $K_n$, also known as the {\it $n$-th layer}, with $\Gamma_n: = {\rm{Gal}}(K_n/K)$ isomorphic to $\Z/{\ell}^n\Z$ (cf. \cite[Proposition 13.1]{washington_book}). With respect to the projection maps from $\Gamma_n$ to $\Gamma_m$ for $n \geq m$, the Iwasawa algebra is defined as $\Lambda:= \displaystyle\lim_{\substack{\longleftarrow \\ n}}\Z_{\ell}[\Gamma_n]$, and is isomorphic to the power series ring over $\Z_{\ell}$. The norm map between the class groups of $K_n$ and $K_m$ for $n \geq m$ can be restricted to the $\ell$-class groups, denoted by $A(K_n)$ and $A(K_m)$, respectively. The ring $\Z_{\ell}[\Gamma_n]$ acts on $A(K_n)$ by the extension of the action of $\Gamma_n$ on $A(K_n)$. Thus, $\Lambda$ acts canonically on the inverse limit $X(K_{\infty}):=  \displaystyle\lim_{\substack{\longleftarrow \\ n}}A(K_n)$ which is known as the Iwasawa module. This inverse system is formed with respect to the norm $N_{n,m}: A(K_n)\rightarrow A(K_m)$ for $n\geq m$. Though it is clear that information about $A(K_n)$ provides information on $X(K_{\infty})$, Iwasawa theory suggests that one can study $A(K_n)$'s via $X(K_{\infty})$. This was substantiated by Iwasawa's class number formula \cite{iwasawa}, which states that there exist constants $\mu(K_{\infty}/K), \lambda(K_{\infty}/K)$, and $\nu(K_{\infty}/K)$ such that for sufficiently large $n$, if $\ell^{e(n)}$ is the largest power of $\ell$ that divides $\#A(K_n)$, then $$e(n) = \mu(K_{\infty}/K)\ell^n + \lambda(K_{\infty}/K)n + \nu(K_{\infty}/K).$$  
The constants $\mu(K_{\infty}/K), \lambda(K_{\infty}/K)$, and $\nu(K_{\infty}/K)$ are known as the Iwasawa invariants. One of the frequently explored $\Z_{\ell}$-extensions is the cyclotomic $\Z_{\ell}$-extension. Let $\zeta_{m}$ be an $m$-th root of unity. For an odd prime $\ell$, the $n$-th layer $\Q_n$ for $\Q$ in such an extension is the unique subfield of $\Q(\zeta_{l^{n+1}})$ of degree $\ell^n$ over $\Q$. For $\ell = 2$, $\Q_n$ is the maximal real subfield of $\Q(\zeta_{2^{n+2}})$. In case of cyclotomic $\Z_{\ell}$-extension of an arbitrary number field $K$, $K_n$ is the compositum of $K$ and $\Q_n$. Due to Brumer's proof of Leopoldt's conjecture for abelian extensions of $\Q$ in \cite{brumer}, any real number field abelian over $\Q$ has a unique $\Z_{\ell}$-extension, namely, the cyclotomic $\Z_{\ell}$-extension.

In \cite{greenberg}, Greenberg conjectured that if $K$ is a totally real number field, then the invariants $\mu(K_{\infty}/K)$ and $\lambda(K_{\infty}/K)$ corresponding to the cyclotomic $\Z_{\ell}$-extension of $K$ must be equal to zero. Ferrero and Washington (\cite{ferrero-washington}) proved that $\mu(K_{\infty}/K)$ vanishes for the cyclotomic $\Z_{\ell}$-extension of a number field $K$ abelian over $\Q$. Greenberg's conjecture is still open, with partial progress made by considering particular values of $\ell$ and specific families of number fields. Some of the works in this direction can be seen in \cite{fukuda}, \cite{fukuda-komatsu}, \cite{gras2}, \cite{kumakawa2}, \cite{L-S3}, \cite{L-S_arx}, \cite{mizusawa2}, \cite{mizusawa3}, \cite{mizusawa4}, \cite{mouhib}, \cite{mouhib-mova}, \cite{mouhib-mova2}, \cite{nishino}, \cite{ozaki-taya}.

By virtue of results from class field theory, the $\ell$-class group of $K$ is isomorphic to the Galois group (over $K$) of the maximal unramified abelian $\ell$-extension of $K$, also known as the $\ell$-Hilbert class field of $K$. If $L(K)$ is the $\ell$-Hilbert class field and $\tL(K)$ is the maximal unramified $\ell$-extension of $K$, then ${\rm{Gal}}(L(K)/K)$ is the maximal abelian quotient of ${\rm{Gal}}(\tL(K)/K)$. Observing the same for the $n$-th layers $K_n$ and passing on to the inverse limit, it can be observed that $X(K_{\infty})$ is the maximal abelian quotient of ${\rm{Gal}}(\tL(K_{\infty})/K_{\infty}) = \displaystyle\lim_{\substack{\longleftarrow \\ n}}{\rm{Gal}}(\tL(K_n)/K_n)$. This highlights the connection between Iwasawa modules and class field towers. In recent years problems based on Greenberg's conjecture have been solved using techniques involving class field towers. Interested readers may refer to \cite{chems24}, \cite{mizusawa2}, \cite{mizusawa4}, \cite{mizusawa5}.   

When $\ell = 2$, for the cyclotomic $\Z_2$-extension of a number field $K$, the base layer is given by $K_0 = K$, $K_1 = K(\sqrt{2})$, and if $K_n = K(\sqrt{a_n})$, then $K_{n+1} = K(\sqrt{2 + a_n})$. Though Greenberg's conjecture is not completely settled for real quadratic fields, lately even multiquadratic number fields with discriminant having small number of prime factors are being investigated. In this work, we focus on the cyclotomic $\Z_2$-extension of an infinite family of real biquadratic fields given by $K = \Q(\sqrt{p}, \sqrt{r})$ where the constituent primes satisfy Condition 1, namely 
\begin{equation} \label{Cond}
    p \equiv 9 \Mod{16}, \ r \equiv 3 \Mod{4},\ \left( \dfrac{p}{r} \right) = -1, \ \left(\dfrac{2}{p}\right)_{4} = -1.
\end{equation}
Our aim is to validate Greenberg's conjecture for this family of fields by looking into rank and order stability. By stability, we mean the eventual termination of growth of the order and rank of $\ell$-class groups. It is already known that the Iwasawa invariants $\mu$ and $\lambda$ are equal to 0 for the $\Z_2$-extensions of the subfields $\Q(\sqrt{p})$, $\Q(\sqrt{r})$, and $\Q(\sqrt{pr})$ (cf. \cite{mouhib-mova}, \cite{ozaki-taya}, \cite{fukuda-komatsu}, \cite{nishino}). In this article, we record how the 2-class groups of the subfields $\Q_n(\sqrt{r}), \Q_n(\sqrt{p})$, and $\Q_n(\sqrt{pr})$ of $K_n$ influence the structure of its 2-class groups. To achieve this, we require genus theory and results on class field towers that terminate. We shall resort to some theorems on capitulation in ramified and unramified extensions, and also thoroughly study the action of Galois groups of certain extensions on class groups. We shall also utilize the family of fields $K$ under our consideration to know more about the ray class field modulo primes involving $2, p$, and $r$ of the $n$-th layers $\Q_n(\sqrt{r})$ and $\Q_n(\sqrt{p})$ of the fields $\Q(\sqrt{r})$ and $\Q(\sqrt{p})$, respectively. We now state our results:

\begin{theorem}\label{Thm1}
Let $K = \Q(\sqrt{p}, \sqrt{r})$ with $p \equiv 9 \Mod{16}, \ r \equiv 3 \Mod 4,\ \left( \dfrac{p}{r} \right) = -1$, and $\left(\dfrac{2}{p}\right)_{4} = -1$. Suppose 
$K_{\infty}$ is the $\Z_2$-extension of $K$ with $n$-th layers $K_n$. Then, the corresponding Iwasawa module $X(K_{\infty}) = \displaystyle\lim_{\substack{\longleftarrow \\ n}}A(K_n)$ is finite and cyclic, with $\#A(\Q_n(\sqrt{p})) \leq \#A(K_n) \leq 2\cdot\#A(\Q_n(\sqrt{p}))$ for all $n \geq 0$. In particular, the corresponding Iwasawa invariant $\lambda(K_{\infty}/K)$ vanishes for $K$.
\end{theorem}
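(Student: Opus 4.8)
Write $k=\Q_n(\sqrt p)$, $k'=\Q_n(\sqrt r)$ and $k''=\Q_n(\sqrt{pr})$ for the three quadratic subfields of the biquadratic extension $K_n/\Q_n$. The plan is to control $A(K_n)$ through these subfields, using that two of them contribute almost nothing. Since $2$ is totally ramified in $\Q_\infty/\Q$ and $\Q$ has odd class number, the standard description of a $\Z_2$-extension in which a single prime is totally ramified gives $X(\Q_\infty)_{\Gamma}\cong A(\Q)=0$ for $\Gamma=\mathrm{Gal}(\Q_\infty/\Q)$, hence $X(\Q_\infty)=0$ by Nakayama and $A(\Q_n)=1$ for all $n$. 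Because $r\equiv 3\Mod 4$ and therefore $pr\equiv 3\Mod 4$, the prime $2$ ramifies in both $\Q(\sqrt r)$ and $\Q(\sqrt{pr})$; so in each of the towers $\Q_\infty(\sqrt r)/\Q(\sqrt r)$ and $\Q_\infty(\sqrt{pr})/\Q(\sqrt{pr})$ exactly one prime, the dyadic one, ramifies and it is totally ramified, and the same argument ties $A(\Q_n(\sqrt r))$ and $A(\Q_n(\sqrt{pr}))$ to $A(\Q(\sqrt r))$ and $A(\Q(\sqrt{pr}))$ for every $n$. Genus theory for $\Q(\sqrt r)$ (discriminant $4r$, with no unit of norm $-1$ since $r\equiv 3\Mod 4$) gives $A(\Q(\sqrt r))=1$, hence $A(\Q_n(\sqrt r))=1$ for all $n$; genus theory for $\Q(\sqrt{pr})$ together with $(p/r)=-1$, combined with the cited stability result for $\Q(\sqrt{pr})$, determines $A(\Q_n(\sqrt{pr}))$ (it should come out cyclic of order $2$ for every $n$).

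For the lower bound, $K_n/k$ is a quadratic extension ramifying at the primes of $k=\Q_n(\sqrt p)$ above $r$ (as $\sqrt r\notin\Q_n(\sqrt p)$, and $r$ is odd and ramified in $\Q(\sqrt r)/\Q$), so $k$ has no nontrivial unramified subextension inside $K_n$ and the norm $N_{K_n/k}\colon A(K_n)\to A(k)$ is surjective; therefore $\#A(K_n)=\#A(\Q_n(\sqrt p))\cdot\#\ker N_{K_n/k}\ge\#A(\Q_n(\sqrt p))$. For the upper bound I would apply Kuroda's class number formula for the $2$-parts in the biquadratic extension $K_n/\Q_n$, which in view of $\#A(\Q_n)=1$ reads
\[
\#A(K_n)=\tfrac14\,q_n\,\#A(\Q_n(\sqrt p))\cdot\#A(\Q_n(\sqrt r))\cdot\#A(\Q_n(\sqrt{pr})),\qquad q_n=[E_{K_n}:E_kE_{k'}E_{k''}],
\]
where $q_n$ is a power of $2$. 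Granting $\#A(\Q_n(\sqrt r))=1$ and $\#A(\Q_n(\sqrt{pr}))=2$, the desired bound $\#A(K_n)\le 2\#A(\Q_n(\sqrt p))$ is precisely the inequality $q_n\le 4$, and then the lower bound forces $q_n\in\{2,4\}$. So the whole estimate reduces to a uniform (in $n$) computation of the unit index $q_n$, and this is where Condition~\eqref{Cond}, especially $p\equiv 9\Mod{16}$ and $\left(\tfrac{2}{p}\right)_4=-1$, must be exploited: one tracks the fundamental units of $\Q_n$, $k$, $k'$ and $k''$ and decides which of their products become squares in $K_n$. I expect this unit-index computation, carried out uniformly in $n$, to be the main obstacle.

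It remains to show $A(K_n)$, and hence $X(K_\infty)=\displaystyle\lim_{\substack{\longleftarrow\\ n}}A(K_n)$, is cyclic, for which it suffices that the $2$-rank of $A(K_n)$ be at most $1$. I would obtain this from genus theory applied to $K_n/k$: the ambiguous class number formula for $K_n/k$, together with an explicit description of the action of $\mathrm{Gal}(K_n/k)$ on $A(K_n)$ (the same analysis needed for $q_n$), should show $\ker N_{K_n/k}$ lies in the Frattini subgroup $2A(K_n)$, so that $A(K_n)/2A(K_n)$ is isomorphic to $A(\Q_n(\sqrt p))/2A(\Q_n(\sqrt p))$; since $A(\Q_n(\sqrt p))$ is cyclic under the hypotheses (a fact about its own $\Z_2$-extension recorded earlier), the $2$-rank of $A(K_n)$ is at most $1$. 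Then all $A(K_n)$ are cyclic, so $X(K_\infty)$ is procyclic; it is finite because $\lambda$ and $\mu$ vanish, which follows from the bound $\#A(K_n)\le 2\#A(\Q_n(\sqrt p))$ together with the cited vanishing of the Iwasawa invariants for $\Q(\sqrt p)$, so that $\#A(\Q_n(\sqrt p))$, hence $\#A(K_n)$, is bounded. A finite procyclic group is cyclic, and $\lambda(K_\infty/K)=0$ is part of this conclusion.
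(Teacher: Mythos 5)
Your overall strategy (control $A(K_n)$ through the three quadratic subfields of $K_n/\Q_n$) is in the spirit of the paper, but as written it has an error and two essential gaps. First, your determination of $A(\Q_n(\sqrt{pr}))$ is wrong: the ``single totally ramified prime'' argument only freezes the tower when the base 2-class group is trivial (so that Nakayama kills $X$), which applies to $\Q$ and $\Q(\sqrt{r})$ but not to $\Q(\sqrt{pr})$, whose 2-class group is nontrivial. In fact, by Fukuda--Komatsu and Nishino (Theorem \ref{f-k} in the paper), $A(k_n)\cong(2,2)$ for all $n\geq 1$, of order $4$, not cyclic of order $2$ as you claim; so the numerical input to your Kuroda computation is incorrect. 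Second, even granting correct inputs, your upper bound reduces everything to a uniform-in-$n$ bound on the unit index $q_n=[E_{K_n}:E_kE_{k'}E_{k''}]$, which you do not prove and yourself flag as ``the main obstacle''; moreover the constant $\tfrac14$ you quote is Kuroda's formula over $\Q$, whereas over $\Q_n$ the relative formula carries a different power of $2$ depending on ramification and on the units of $\Q_n$ (whose rank grows like $2^n$), so this is precisely where the difficulty lies and it is left open. Third, your cyclicity argument rests on the unproved assertion $\ker N_{K_n/\Q_n(\sqrt p)}\subseteq 2A(K_n)$, again deferred to ``the same analysis needed for $q_n$.'' So neither the inequality $\#A(K_n)\leq 2\,\#A(\Q_n(\sqrt p))$ nor the cyclicity is actually established.

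For comparison, the paper avoids unit-index computations entirely. The bounds come from the Galois action: since $A(\Q_n(\sqrt r))=1$, the generator $\sigma$ of ${\rm{Gal}}(K_n/\Q_n(\sqrt r))$ acts as inversion on $A(K_n)$ (Proposition \ref{rmk to genus}); the genus formula for the \emph{unramified} extension $K_n/k_n$ with $\#A(k_n)=4$ bounds the $\langle\sigma\tau\rangle$-fixed part by $2$, giving $\#A(K_n)=2\cdot\#A(K_n)^{\tau+1}$, and a Chebotarev/norm-kernel argument gives $\#A(K_n)^{\tau+1}\leq\#A(\Q_n(\sqrt p))$, while surjectivity of the norm in the ramified extension $K_n/\Q_n(\sqrt p)$ gives the lower bound (Lemma \ref{bound of A(K_n)}). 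Cyclicity is proved at level $2$ by identifying the three unramified quadratic extensions of $k_2$ inside $\tL(k_2)$, invoking the Gorenstein--Kisilevsky classification of 2-groups $G$ with $G/G'\cong(2,2)$, and observing that $\mathfrak{K}_1=\Q(\sqrt{pr},\sqrt{p_1})$ and $\mathfrak{K}_2=\Q(\sqrt{pr},\sqrt{p_2})$ are conjugate fields with isomorphic class groups, which forces $A(K_2)$ to be the cyclic one; Fukuda's stability theorem then propagates cyclicity to all $n\geq 1$. If you want to salvage your route, you would need the relative Kuroda formula over $\Q_n$ with the correct 2-power constant and a genuine uniform bound on $q_n$, which is a substantially harder path than the paper's.
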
 
\noindent As the Iwasawa invariants $\mu$ and $\lambda$ of $\Q(\sqrt{p})$ corresponding to its $\Z_2$-extension are equal to 0, there exists $n_0 \geq 0$ such that $\#A(\Q_n( \sqrt{p})) = \#A(\Q_{n_0}( \sqrt{p}))$ for all $n \geq n_0$. With this $n_0$, from Theorem \ref{Thm1}, we conclude:

\begin{cor}\label{Cor1}
Let $n_0 \geq 0$ with $\#A(\Q_n( \sqrt{p})) = \#A(\Q_{n_0}( \sqrt{p}))$ for all $n \geq n_0$. Then, $\#X(\Q_{\infty}(\sqrt{p}))\leq \#X(K_{\infty}) \leq 2^{n_0 + 1}\cdot\#X(\Q_{\infty}(\sqrt{p}))$, where $X(\Q_{\infty}(\sqrt{p}))$ is the Iwasawa module corresponding to the $\Z_2$-extension of $\Q(\sqrt{p})$. 
\end{cor}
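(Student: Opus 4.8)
The plan is to deduce the Corollary from Theorem~\ref{Thm1} together with a standard fact about inverse limits of $2$-class groups in a $\Z_2$-extension, which I would isolate as a short preliminary observation and then apply both to $\Q(\sqrt p)$ and to $K$. Let $F$ be a number field in which $2$ is unramified (for instance $F=\Q(\sqrt p)$, since $p\equiv 1\Mod 8$, or $F=K$), and let $F_\infty/F$ be its cyclotomic $\Z_2$-extension. Since $2$ is totally ramified in $\Q_\infty/\Q$ while $F\cap\Q_\infty=\Q$, the extension $F_{n+1}/F_n$ is ramified at a prime above $2$ for every $n\geq 0$; as $[F_{n+1}:F_n]=2$ is prime, the $2$-Hilbert class field $L(F_n)$ of $F_n$ satisfies $L(F_n)\cap F_{n+1}=F_n$, and hence the norm $N_{n+1,n}\colon A(F_{n+1})\to A(F_n)$, realized by restriction $\mathrm{Gal}(L(F_{n+1})/F_{n+1})\to\mathrm{Gal}(L(F_n)F_{n+1}/F_{n+1})\cong A(F_n)$, is surjective. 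Consequently $\#A(F_n)$ is non-decreasing in $n$, the projection $X(F_\infty)\to A(F_n)$ is surjective for every $n$, and, whenever $X(F_\infty)$ is finite, $\#A(F_n)$ is bounded and eventually constant, the transition maps are eventually bijective, and $\#X(F_\infty)=\lim_{n}\#A(F_n)$.

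Applying this to $F=\Q(\sqrt p)$: the hypotheses on $p$ force $\mu=\lambda=0$ for its $\Z_2$-extension, so $X(\Q_\infty(\sqrt p))$ is finite; since $\#A(\Q_n(\sqrt p))$ is non-decreasing and equals $\#A(\Q_{n_0}(\sqrt p))$ for all $n\geq n_0$, its stable value is $\#A(\Q_{n_0}(\sqrt p))$, so $\#X(\Q_\infty(\sqrt p))=\#A(\Q_{n_0}(\sqrt p))$.

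Applying the observation to $F=K$: by Theorem~\ref{Thm1}, $X(K_\infty)$ is finite, so $\#X(K_\infty)=\lim_n\#A(K_n)=\#A(K_m)$ for all sufficiently large $m$; fix such an $m$ with $m\geq n_0$. The sandwich bound of Theorem~\ref{Thm1} at level $m$ gives $\#A(\Q_m(\sqrt p))\leq\#A(K_m)\leq 2\cdot\#A(\Q_m(\sqrt p))$, and $\#A(\Q_m(\sqrt p))=\#A(\Q_{n_0}(\sqrt p))=\#X(\Q_\infty(\sqrt p))$ by the previous paragraph. Hence
\[
\#X(\Q_\infty(\sqrt p)) \leq \#X(K_\infty) \leq 2\cdot\#X(\Q_\infty(\sqrt p)) \leq 2^{\,n_0+1}\cdot\#X(\Q_\infty(\sqrt p)),
\]
the last step being trivial since $n_0\geq 0$; this is precisely the claimed estimate.

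There is no real obstacle here: essentially all of the work sits in Theorem~\ref{Thm1}, and the only points needing care are the surjectivity of the norm maps (equivalently, that the primes above $2$ remain ramified at every layer) and the identity $\#X(F_\infty)=\lim_n\#A(F_n)$, both routine once packaged as the preliminary observation and applied uniformly to $\Q(\sqrt p)$ and $K$. I would also note that the argument actually yields the sharper bound $\#X(K_\infty)\leq 2\cdot\#X(\Q_\infty(\sqrt p))$, the stated $2^{\,n_0+1}$ being merely a convenient weakening.
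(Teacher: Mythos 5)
Your argument is correct in substance and follows essentially the same route as the paper's proof: combine the two-sided bound of Theorem~\ref{Thm1} (Lemma~\ref{bound of A(K_n)}) with stabilization of the $2$-class numbers along the tower (Theorem~\ref{fukuda's result}, norm surjectivity) and identify $\#X$ with the stable value of $\#A$; your remark that this actually yields $\#X(K_\infty)\leq 2\cdot\#X(\Q_\infty(\sqrt{p}))$, of which the stated $2^{n_0+1}$ is only a weakening, is also accurate. One slip needs fixing: $2$ is \emph{not} unramified in $K$ --- since $r\equiv 3\Mod 4$, the prime $2$ ramifies in $\Q(\sqrt{r})\subset K$ --- so your preliminary observation, stated under the hypothesis that $2$ is unramified in $F$, does not literally apply to $F=K$; and without that hypothesis the inference from ``$2$ totally ramified in $\Q_\infty/\Q$ and $F\cap\Q_\infty=\Q$'' to ``$F_{n+1}/F_n$ ramified above $2$ at every step'' can fail (e.g.\ $F=\Q(\sqrt{34})$ has $F_1=\Q(\sqrt{2},\sqrt{17})$ unramified over $F$ above $2$). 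The conclusion you need for $K$ nevertheless holds: the two primes of $K$ above $2$ ramify in $K_1=K(\sqrt{2})$ (see Lemma~\ref{rank A(K1)}), hence are totally ramified in $K_\infty/K$, which supplies the norm surjectivity and the identity $\#X(K_\infty)=\lim_n\#A(K_n)$ your limit argument uses; alternatively, the standard fact that ramified primes are totally ramified in $F_\infty/F_{n_e}$ for some finite $n_e$ already suffices, since for $K$ you only invoke these properties at sufficiently high layers.
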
 

\noindent Let $S$ be a finite set of prime integers coprime to 2. For any number field $\mathfrak{f}$, suppose $A_S(\mathfrak{f})$ denotes the 2-part of the ray class group of $\mathfrak{f}$ modulo the product of all the prime ideals of $\mathfrak{f}$ that divide $p$ in $S$. Let $(\Q_{\infty})_S$ be the maximal pro-2-extension of $\Q_{\infty}$ unramified outside $S$. In an attempt to classify the sets $S$ for which ${\rm{Gal}}((\Q_{\infty})_S/\Q_{\infty})$ is prometacyclic, Mizusawa (cf. \cite[Theorem 3.1]{mizusawa4}) obtained a result that helps to evaluate the 2-rank of $A_S(\mathfrak{f})$ via certain quadratic extensions $F/\mathfrak{f}$. As an outcome of that result and Theorem \ref{Thm1}, we obtain the following corollary.

\begin{cor}\label{Cor2}
Suppose $p \equiv 9 \Mod{16}, \ r \equiv 3 \Mod{4}, \ \left( \dfrac{p}{r} \right) = -1$, and $\left(\dfrac{2}{p}\right)_{4} = -1$. Then we have ${\rm{rank}}_2A_{\{p\}}(\Q_n(\sqrt{r})) = 2$.   
\end{cor}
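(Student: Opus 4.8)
The plan is to invoke Mizusawa's Theorem~3.1 \cite{mizusawa4} with $\mathfrak f=\Q_n(\sqrt r)$, $S=\{p\}$, and the auxiliary quadratic extension taken to be $F=K_n=\Q_n(\sqrt r,\sqrt p)$ --- exactly the field studied in Theorem~\ref{Thm1}. The first task is to see that $F=K_n$ is an eligible choice: since $p\equiv 1\pmod 4$, the conductor of $\Q(\sqrt p)/\Q$ is the odd integer $p$, so $K_n/\Q_n(\sqrt r)$ is unramified outside the primes of $\Q_n(\sqrt r)$ above $p$ and is tamely ramified there (as $p$ is odd), and each such prime genuinely ramifies in $K_n$ because $p$ is unramified in $\Q_n(\sqrt r)/\Q$ but ramified in $\Q(\sqrt p)/\Q$. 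Hence $K_n$ lies in the ray class field of $\Q_n(\sqrt r)$ modulo $\prod_{\mathfrak P\mid p}\mathfrak P$ (together with the archimedean places), whose Galois group over $\Q_n(\sqrt r)$ has $2$-Sylow subgroup $A_{\{p\}}(\Q_n(\sqrt r))$.

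Next I would feed in the local data Mizusawa's criterion requires, all of it read off from Condition~1. From $p\equiv 1\pmod 8$ the prime $p$ splits in $\Q_1=\Q(\sqrt 2)$; from $p\equiv 9\pmod{16}$ the Frobenius of $p$ in $\Gamma=\mathrm{Gal}(\Q_\infty/\Q)\cong\Z_2$ topologically generates a subgroup of index $2$, so $p$ has exactly two primes in $\Q_n$ for each $n\ge 1$; and from $\left(\tfrac pr\right)=-1$ --- equivalently $\left(\tfrac rp\right)=-1$, since $p\equiv 1\pmod 4$ --- the prime $p$ is inert in $\Q(\sqrt r)/\Q$. From these one reads off the number of primes of $\Q_n(\sqrt r)$ above $p$ at each layer. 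One also needs the behaviour of $-1$ and of the fundamental unit $\epsilon_r$ of $\Q(\sqrt r)$ at those primes: as $r\equiv 3\pmod 4$ is prime one has $N(\epsilon_r)=+1$, whence $\epsilon_r^{\,p+1}=1$ in every residue field $\mathcal O_{\Q_n(\sqrt r)}/\mathfrak P$ with $\mathfrak P\mid p$, and $p\equiv 9\pmod{16}$ gives $v_2(p+1)=1$, so the $2$-primary order of $\epsilon_r\bmod\mathfrak P$ is at most $2$ and in particular $\epsilon_r$ is a square modulo every prime above $p$. This is precisely the data that pins the ``genus''/local correction term in Mizusawa's formula.

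The last step is to apply Theorem~\ref{Thm1}: $X(K_\infty)$ is finite and cyclic. Since a prime above $2$ is totally ramified in $K_\infty/K_n$, the norm maps $A(K_m)\to A(K_n)$ are surjective for $m\ge n$, so the projection $X(K_\infty)\twoheadrightarrow A(K_n)$ is onto; hence $A(K_n)$ is cyclic, i.e. $\mathrm{rank}_2 A(K_n)\le 1$, for every $n\ge 0$. Feeding $\mathrm{rank}_2 A(K_n)\le 1$ together with the correction term $1$ into Mizusawa's formula bounds $\mathrm{rank}_2 A_{\{p\}}(\Q_n(\sqrt r))$ above by $2$; the matching lower bound comes out of the same formula (and, for $n=0$, also directly: the narrow $2$-class group of $\Q(\sqrt r)$ has rank $1$ because the discriminant $4r$ has exactly the two prime factors $2$ and $r$, and $K_0=\Q(\sqrt r,\sqrt p)$ furnishes a further, independent, ramified-at-$p$ quadratic extension inside the ray class field). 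Therefore $\mathrm{rank}_2 A_{\{p\}}(\Q_n(\sqrt r))=2$.

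The main obstacle I expect is the interface with Mizusawa's Theorem~3.1: one must verify that its hypotheses genuinely hold for $F=K_n$ and that the correction term it outputs equals $1$ rather than $0$ or $2$ --- this is sensitive to the layer-by-layer count of primes of $\Q_n(\sqrt r)$ above $p$ and to whether $-1$ and $\epsilon_r$ are local norms from $K_n$ at those primes, and an off-by-one here would be fatal. A secondary difficulty is keeping the ranks under control uniformly in $n$: $\mathrm{rank}_2 A(K_n)\le 1$ needs the surjectivity of $X(K_\infty)\twoheadrightarrow A(K_n)$ (hence that a prime above $2$ is totally ramified in $K_\infty/K_n$), and the small layers $n=0,1$ may deserve a direct check.
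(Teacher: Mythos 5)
Your overall strategy is exactly the paper's: apply Mizusawa's Theorem 3.1 to the quadratic extension $F=K_n$ of $\mathfrak f=\Q_n(\sqrt r)$ with $S=\{p\}$ and $\Sigma=\emptyset$, and feed in the rank of $A(K_n)$ from Theorem \ref{Thm1}. But as written the proposal has two genuine gaps, and you have flagged the first one yourself. Mizusawa's result, in the form the paper uses it, has no ``correction term'' to be computed from local data: it says that if $F/\mathfrak f$ is quadratic, unramified outside $S$, ramified at every prime of $S\setminus\Sigma$, \emph{and} $A_{\Sigma}(\mathfrak f)=\{id\}$, then ${\rm rank}_2A_S(\mathfrak f)=1+{\rm rank}_2A_{\Sigma}(F)$. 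The decisive hypothesis you never verify (or even identify) is the triviality of $A_\Sigma(\mathfrak f)=A(\Q_n(\sqrt r))$, which is precisely Theorem \ref{A(Qn_r)} (Ozaki--Taya: for $r\equiv 3\Mod 4$ the Iwasawa module of $\Q(\sqrt r)$ is trivial, so $A(\Q_n(\sqrt r))=\{id\}$ for all $n$). Your middle paragraph about the splitting of $p$ in $\Q_1$, the norm of $\epsilon_r$, and whether $\epsilon_r$ is a square modulo primes above $p$ is not what the theorem asks for and does not substitute for this hypothesis; without it the application of Theorem 3.1 is not justified, and you concede you do not know whether the output would be $0$, $1$ or $2$.

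The second gap is the lower bound. From the finiteness and cyclicity of $X(K_\infty)$ you only extract ${\rm rank}_2A(K_n)\le 1$, and then assert that ``the matching lower bound comes out of the same formula,'' which is circular: the formula gives ${\rm rank}_2A_{\{p\}}(\Q_n(\sqrt r))=1+{\rm rank}_2A(K_n)$, so to get the value $2$ you must show $A(K_n)\neq\{id\}$, i.e. ${\rm rank}_2A(K_n)=1$ exactly. In the paper this comes from Theorem \ref{Thm1} together with the inequality $\#A(K_n)\ge\#A(\Q_n(\sqrt p))$ and $\#A(\Q_n(\sqrt p))\ge\#A(\Q_1(\sqrt p))=2$ for $n\ge 1$ (equivalently from Lemma \ref{rank A(K1)}, ${\rm rank}_2A(K_1)=1$, propagated upward); your ad hoc argument at $n=0$ via the narrow class group of $\Q(\sqrt r)$ does not address the layers $n\ge 1$ and is not the mechanism the formula uses. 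So the route is the right one, but the two verifications that actually carry the proof --- $A(\Q_n(\sqrt r))=\{id\}$ and ${\rm rank}_2A(K_n)=1$ --- are missing.
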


\noindent Although $\#A(K_n) \geq \#A(\Q_n(\sqrt{p}))$ as $K_n/\Q_n(\sqrt{p})$ is a ramified extension, it would be interesting to know when these orders are equal. Our next theorem is aimed in this direction.

\begin{theorem}\label{Thm2}
Let $K = \Q(\sqrt{p}, \sqrt{r})$ with $p \equiv 9 \Mod{16}, \ r \equiv 3 \Mod 4,\ \left( \dfrac{p}{r} \right) = -1$, and $\left(\dfrac{2}{p}\right)_{4} = -1$. For any $n \geq 0$, $L(K_n)$, the 2-Hilbert class field of $K_n$ is always abelian over $\Q_n(\sqrt{p})$. For $n =0$, $\#A(K_0) = \#A(\Q_{0}(\sqrt{p})) = 1$. Furthermore, if $n \geq 1$, then $\#A(K_n) = \#A(\Q_{n}(\sqrt{p}))$ if and only if some non-trivial ideal class of $A(\Q_n(\sqrt{p}))$ capitulates in $A(K_n)$. 
\end{theorem}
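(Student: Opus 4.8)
Write $k_n:=\Q_n(\sqrt p)$ and $G=\mathrm{Gal}(K_n/k_n)=\langle\sigma\rangle$ of order $2$; the plan is to compare $A(K_n)$ with $A(k_n)$ through the norm $N\colon A(K_n)\to A(k_n)$ and the extension map $j\colon A(k_n)\to A(K_n)$. Since $\mathrm{disc}(k_n)$ is divisible only by $2$ and $p$, the prime $r$ is unramified in $k_n/\Q$, so the primes of $k_n$ above $r$ ramify in $K_n=k_n(\sqrt r)$; hence $K_n/k_n$ is ramified, $N$ is surjective, and from $N\circ j=2$, $j\circ N=1+\sigma$ one gets $j(A(k_n))=(1+\sigma)A(K_n)$. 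Note that ``some non-trivial class of $A(k_n)$ capitulates in $A(K_n)$'' means exactly $\ker j\ne 1$, and that in $1\to A(K_n)\to\mathrm{Gal}(L(K_n)/k_n)\to G\to 1$ (with $G$ cyclic) the middle group is abelian if and only if $G$ acts trivially on $A(K_n)$, i.e. iff $(1-\sigma)A(K_n)=0$, i.e. iff $L(K_n)$ is the genus field of $K_n/k_n$.

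To prove the abelianness, observe $(1-\sigma)A(K_n)\subseteq\ker N$ (as $N\circ(1-\sigma)=0$), and by Theorem~\ref{Thm1} $\#\ker N=\#A(K_n)/\#A(k_n)\le 2$, so $\#(1-\sigma)A(K_n)\le 2$; when $\#A(K_n)=\#A(k_n)$ this gives $\ker N=1$ and $\sigma$ acts trivially. The remaining case $\#A(K_n)=2\#A(k_n)$ is the crux: I must exclude $\#(1-\sigma)A(K_n)=2$. Here I would use the analysis underlying Theorem~\ref{Thm1} — Kuroda's class number formula together with genus theory for $K_n/\Q_n$ pins down $A(K_n)$ and its $\mathrm{Gal}(K_n/\Q_n)$-action in terms of $\Q_n(\sqrt r)$, $\Q_n(\sqrt{pr})$ and the relevant unit indices, yielding that $\sigma$ acts trivially. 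Concretely, Chevalley's ambiguous class number formula gives
\[
\#A(K_n)^{G}=\#A(k_n)\cdot\frac{2^{\,t_n-1}}{[E_{k_n}:E_{k_n}\cap N_{K_n/k_n}K_n^{\times}]},
\]
with $t_n$ the number of primes of $k_n$ ramified in $K_n$; combined with $\#A(K_n)=\#A(K_n)^{G}\cdot\#(1-\sigma)A(K_n)=2\#A(k_n)$ this forces $\#(1-\sigma)A(K_n)=1$ precisely when the unit-norm index is $2^{\,t_n-2}$ rather than $2^{\,t_n-1}$. Establishing the value of that index — i.e. deciding which units of $k_n$ are everywhere local norms from $K_n$ — is the main obstacle, and it is exactly where $p\equiv 9\Mod{16}$ and $\left(\dfrac{2}{p}\right)_{4}=-1$ enter.

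For $n=0$: $\#A(\Q(\sqrt p))=1$ by genus theory, since $p\equiv1\Mod{4}$ is prime with only one ramified prime. For $\#A(K)=1$, I would note that the conductor--discriminant formula gives $\mathrm{disc}(K)=p\cdot 4r\cdot 4pr=(4pr)^{2}=\mathrm{disc}(\Q(\sqrt{pr}))^{2}$, so $K/\Q(\sqrt{pr})$ is unramified; as $\left(\dfrac{p}{r}\right)=-1$ forces $A(\Q(\sqrt{pr}))\cong\Z/2\Z$, the field $K$ is the $2$-Hilbert class field of $\Q(\sqrt{pr})$, and $\#A(K)=1$ says this $2$-class field tower stops at the first step — part of what is shown in proving Theorem~\ref{Thm1}, and consistent with the bound $\#A(K)\le 2$ there.

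Finally, for $n\ge 1$: by the abelianness $\sigma$ acts trivially, hence $j(A(k_n))=(1+\sigma)A(K_n)=A(K_n)^{2}$ and $\#\ker j=\#A(k_n)\cdot\#A(K_n)[2]/\#A(K_n)$. The proof of Theorem~\ref{Thm1} gives that $A(K_n)$ is cyclic, and it is non-trivial for $n\ge1$ (if $A(k_n)\ne1$ this follows from surjectivity of $N$; if $A(k_n)=1$ it follows from the displayed formula, because $t_n\ge2$ for $n\ge1$, there being at least two primes of $k_n$ above $r$). Therefore $\#A(K_n)[2]=2$ and $\#\ker j=2\#A(k_n)/\#A(K_n)\in\{1,2\}$, equalling $2$ exactly when $\#A(K_n)=\#A(k_n)$. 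Since $\ker j\ne 1\iff\#\ker j=2$, this is the asserted equivalence. So everything reduces to the single hard point above: the ambiguous-class/unit-norm computation in the case $\#A(K_n)=2\#A(k_n)$.
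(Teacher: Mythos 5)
Your reduction of the theorem to the single claim ``${\rm Gal}(K_n/\Q_n(\sqrt p))$ acts trivially on $A(K_n)$'' is sound, and once that claim is granted your derivation of the capitulation criterion (via $j\circ N=1+\sigma$, cyclicity of $A(K_n)$ and the bound $\#A(\Q_n(\sqrt p))\le\#A(K_n)\le 2\#A(\Q_n(\sqrt p))$ from Theorem~\ref{Thm1}) is correct and in fact a bit more self-contained than the paper's, which invokes Gras's capitulation theorem (Theorem~\ref{GrasCap}) for the forward direction; the $n=0$ case is also fine, resting on what is already proved en route to Theorem~\ref{Thm1}. But the trivial-action claim itself is exactly the content of the theorem's first assertion, and your proposal does not prove it in the only case where it is at issue, namely $\#A(K_n)=2\#A(\Q_n(\sqrt p))$. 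You reduce it to showing that the unit-norm index $[E_{\Q_n(\sqrt p)}:E_{\Q_n(\sqrt p)}\cap N_{K_n/\Q_n(\sqrt p)}K_n^{\times}]$ equals $2^{t_n-2}$ at every layer $n$, and then explicitly defer this (``the main obstacle''), gesturing at Kuroda's formula and ``the analysis underlying Theorem~\ref{Thm1}''. That analysis does not contain such a computation, and evaluating this index uniformly in $n$ (it involves the unit groups of the layers $\Q_n(\sqrt p)$ and a growing set $t_n$ of ramified primes) is not a routine step; so as written the argument has a genuine gap at its central point.

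For comparison, the paper proves the trivial action (Lemma~\ref{tau as 1}) by a quite different route that never touches unit indices in $K_n/\Q_n(\sqrt p)$: it first uses the triviality of $A(\Q_n(\sqrt r))$ (Theorem~\ref{A(Qn_r)}) to see that ${\rm Gal}(K_n/\Q_n(\sqrt r))$ acts as inversion, then the genus formula for the \emph{unramified} quadratic extension $K_n/\Q_n(\sqrt{pr})$ to get $\#A(K_n)=2\cdot\#A(K_n)^{\tau+1}$ (Lemma~\ref{bound of A(K_n)}), which already settles $\#A(K_n)\le 4$; and for $\#A(K_n)\ge 8$ it shows that the alternative action would force $K_n/\Q_n(\sqrt{pr})$ to satisfy Taussky condition (B), hence by Kisilevsky's theorem (Theorem~\ref{Kisilevsky 1}) the tower group ${\rm Gal}(\tL(k_n)/k_n)$ would be semidihedral, contradicting Theorem~\ref{mouhib-mova 2008} together with Mizusawa's theorem that semidihedral groups cannot occur over $\Z_2$-extensions of real quadratic fields (Theorem~\ref{never sd}). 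In other words, the missing step in your proposal is supplied in the paper by class-field-tower and group-theoretic input (the third quadratic subfield $\Q_n(\sqrt{pr})$ and the semidihedral exclusion), not by an ambiguous-class/unit-index computation, and without some substitute for that input your proof of the abelianness assertion does not go through.
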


\noindent As the field $L(K_n)$ is ramified at primes above 2 and $r$ over $\Q_n(\sqrt{p})$, we obtain the next corollary.

\begin{cor}\label{Cor3}
Let $p \equiv 9 \Mod{16}, \ r \equiv 3 \Mod 4,\ \left( \dfrac{p}{r} \right) = -1$, and $\left(\dfrac{2}{p}\right)_{4} = -1$. Let $\Q_n(\sqrt{p})$ be the $n$-th layer in the $\Z_2$-extension of $\Q(\sqrt{p})$. Then, the 2-rank of the maximal abelian extension of $\Q_n(\sqrt{p})$ unramified outside the primes above 2 and $r$ is at least 2.
\end{cor}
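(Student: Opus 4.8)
Let $M_n$ denote the maximal abelian extension of $F:=\Q_n(\sqrt p)$ unramified outside the primes above $2$ and $r$. The plan is to exhibit inside $M_n$ a finite Galois subextension of $F$ with Galois group $(\Z/2\Z)^2$; since $\mathrm{Gal}(M_n/F)$ then surjects onto $(\Z/2\Z)^2$, its $2$-rank is at least $2$, which is the asserted lower bound.

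The two generators come from the two primes allowed to ramify. First, $\sqrt r\notin F$, because the quadratic subfields of $\Q_n(\sqrt p)$ are $\Q(\sqrt2)$, $\Q(\sqrt p)$, $\Q(\sqrt{2p})$; hence $K_n=F(\sqrt r)=\Q_n(\sqrt p,\sqrt r)$ is a quadratic extension of $F$ ramified only at primes above $2$ and $r$, and in fact ramified at the primes above $r$, since $r$ is unramified in $F/\Q$ but ramifies in $\Q(\sqrt r)/\Q$. (Alternatively, one may invoke Theorem~\ref{Thm2}: $L(K_n)\supseteq K_n$ is abelian over $F$, and $L(K_n)/K_n$ being unramified, $L(K_n)\subseteq M_n$.) Second, the cyclotomic $\Z_2$-extension $\Q_\infty(\sqrt p)/F$ is pro-abelian and unramified outside the primes above $2$, so its first layer $\Q_{n+1}(\sqrt p)$ lies in $M_n$. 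Consequently the compositum $K_{n+1}=\Q_{n+1}(\sqrt p,\sqrt r)=K_n\cdot\Q_{n+1}(\sqrt p)$ is contained in $M_n$, and it is abelian over $F$ because $\mathrm{Gal}(\Q_{n+1}(\sqrt p,\sqrt r)/\Q)\cong\Z/2^{n+1}\Z\times\Z/2\Z\times\Z/2\Z$.

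It remains to check that $\mathrm{Gal}(K_{n+1}/F)$ is $(\Z/2\Z)^2$ and not cyclic, i.e.\ that the two intermediate quadratic extensions $F(\sqrt r)/F$ and $\Q_{n+1}(\sqrt p)/F$ are distinct: indeed the first is ramified at the primes above $r$, whereas the second, sitting in the cyclotomic $\Z_2$-extension, is unramified there. Hence $\mathrm{rank}_2\mathrm{Gal}(K_{n+1}/F)=2$ and therefore $\mathrm{rank}_2\mathrm{Gal}(M_n/F)\ge 2$. I do not anticipate a real obstacle here, since this is a corollary; the only points requiring care are the ramification bookkeeping that places both $K_n$ and $\Q_{n+1}(\sqrt p)$ inside $M_n$, and the short independence argument that yields $2$-rank $2$ rather than a single cyclic quotient. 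If one prefers to bypass the cyclotomic tower, the field $F(\sqrt r,\sqrt{-1})$ serves equally well: $F(\sqrt{-1})/F$ is ramified only at the primes above $2$, and $\sqrt{-1}\notin F(\sqrt r)$ as that field is totally real.
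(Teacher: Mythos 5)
Your proof is correct, but it follows a genuinely different route from the paper. The paper deduces Corollary \ref{Cor3} from the machinery it has just built: by Theorem \ref{Thm2}, $L(K_n)$ is abelian over $\Q_n(\sqrt{p})$ and, being unramified over $K_n$, is ramified over $\Q_n(\sqrt{p})$ only at primes above $2$ and $r$; inside it sits $L(k_n)=\Q_n(\sqrt{p},\sqrt{r},\sqrt{p_1})$ (the explicit $2$-Hilbert class field of $k_n$ from Theorem \ref{f-k}), which is biquadratic over $\Q_n(\sqrt{p})$, so ${\rm{Gal}}(L(K_n)/\Q_n(\sqrt{p}))$ already has a $(2,2)$ quotient. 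You instead take as witnesses the quadratic extensions $\Q_n(\sqrt{p},\sqrt{r})/\Q_n(\sqrt{p})$ (ramified above $r$, unramified outside $2r$) and the next cyclotomic layer $\Q_{n+1}(\sqrt{p})/\Q_n(\sqrt{p})$ (unramified outside $2$), whose compositum $K_{n+1}$ is an abelian $(2,2)$-extension unramified outside $2$ and $r$; your distinctness argument via ramification at $r$ is sound. What your argument buys is economy and generality: it avoids Theorem \ref{Thm2}, the Galois-action Lemma behind it, and the Fukuda--Komatsu/Nishino description of $L(k_n)$, it needs none of the congruence or quartic-residue hypotheses on $p$ beyond $p\neq r$, and it works uniformly for all $n\geq 0$ (the paper's witness $\sqrt{p_1}$ requires $p_1\in\Q_n$, i.e.\ $n\geq 1$). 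What the paper's route buys is sharper contextual information: the $(2,2)$ quotient is realized inside $L(K_n)$ itself, tying the ray class field of $\Q_n(\sqrt{p})$ to the class field tower data used throughout the article. One small caveat on your closing remark: the variant $F(\sqrt{r},\sqrt{-1})$ introduces ramification at the real places, so it is admissible only if ``unramified outside the primes above $2$ and $r$'' is not meant to include the archimedean places; your main construction (like the paper's) is totally real, hence safe under either reading.
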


\noindent Finally, we deal with the case $n=1$ and provide an alternate condition for the equality $\#A(K_1) = \#A(\Q_1(\sqrt{p}))$.

\begin{theorem}\label{Thm3}
 Let $K = \Q(\sqrt{p}, \sqrt{r})$ with $p \equiv 9 \Mod{16}$, $r \equiv 3 \Mod 4$, $\left( \dfrac{p}{r} \right) = -1$, and $\left(\dfrac{2}{p}\right)_{4} = -1$. Suppose $a$ and $b$ are nonzero integers such that $p = a^2 - 2b^2$. If $\left( \dfrac{a}{p} \right) = -1$, or equivalently, $\left( \dfrac{b}{p} \right) = 1$, then $\#A(K_1) = \#A(\Q_1(\sqrt{p})) = 2$. 
\end{theorem}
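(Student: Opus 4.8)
The plan is to deduce Theorem~\ref{Thm3} from Theorem~\ref{Thm2}. Write $\Q_1(\sqrt{p}) = \Q(\sqrt{2},\sqrt{p})$ and $K_1 = \Q(\sqrt{2},\sqrt{p},\sqrt{r})$, and recall that $\#A(\Q_1(\sqrt{p})) = 2$, the condition $\left(\tfrac{2}{p}\right)_{4} = -1$ being the relevant input here. By Theorem~\ref{Thm2}, proving $\#A(K_1) = \#A(\Q_1(\sqrt{p})) = 2$ is then the same as showing that the unique non-trivial ideal class of $A(\Q_1(\sqrt{p}))$ capitulates in the quadratic extension $K_1 = \Q_1(\sqrt{p})(\sqrt{r})$, so the whole task is to exhibit this capitulation. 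I would first dispose of the asserted equivalence: since $p \nmid ab$ (otherwise $p^2 \mid p$), the congruence $a^2 \equiv 2b^2 \Mod{p}$ gives $2 \equiv (ab^{-1})^2 \Mod{p}$, so by the definition of the rational quartic residue symbol $\left(\tfrac{2}{p}\right)_{4} = \left(\tfrac{ab^{-1}}{p}\right) = \left(\tfrac{a}{p}\right)\left(\tfrac{b}{p}\right)$; as $\left(\tfrac{2}{p}\right)_{4} = -1$, the two conditions $\left(\tfrac{a}{p}\right) = -1$ and $\left(\tfrac{b}{p}\right) = 1$ are equivalent.

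Next I would make the non-trivial class explicit. Since $p \equiv 9 \Mod{16}$ forces $p \equiv 1 \Mod 8$, the prime $p$ splits in $\Q(\sqrt{2})$, and the identity $p = N_{\Q(\sqrt{2})/\Q}(a + b\sqrt{2})$ exhibits $\mathfrak{p} := (a + b\sqrt{2})$ as a prime of $\Q(\sqrt{2})$ above $p$. This prime ramifies in $\Q_1(\sqrt{p}) = \Q(\sqrt{2})(\sqrt{p})$, so $\mathfrak{p}\,\mathcal{O}_{\Q_1(\sqrt{p})} = \mathfrak{P}^2$ for a prime $\mathfrak{P}$ of $\Q_1(\sqrt{p})$, while $\mathfrak{P}\cdot\tau(\mathfrak{P}) = (\sqrt{p})$, where $\tau$ generates $\mathrm{Gal}(\Q_1(\sqrt{p})/\Q(\sqrt{p}))$ (so $\tau$ flips $\sqrt 2$). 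Under our hypotheses $A(\Q_1(\sqrt{p}))$ is cyclic of order $2$ generated by $[\mathfrak{P}]$, with $\mathfrak{P}^2 = (a + b\sqrt{2})$; this is the class whose capitulation must be established.

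To see that $[\mathfrak{P}]$ capitulates in $K_1 = \Q_1(\sqrt{p})(\sqrt{r})$, note that $(\mathfrak{P}\,\mathcal{O}_{K_1})^2 = (a+b\sqrt{2})$, so by unique factorisation of ideals $\mathfrak{P}$ becomes principal in $K_1$ whenever $\mathfrak{P}^2$ has a generator $\alpha$ with $\Q_1(\sqrt{p})(\sqrt{\alpha}) \subseteq K_1$. Since $\mathfrak{P}$ is non-principal, no generator of $\mathfrak{P}^2$ is a square, and $K_1/\Q_1(\sqrt{p})$ is obtained by adjoining $\sqrt{r}$; hence what must be shown is that there is a unit $v \in \mathcal{O}_{\Q_1(\sqrt{p})}^{\times}$ with
\[
 r\,v\,(a + b\sqrt{2}) \in \bigl(\Q_1(\sqrt{p})^{\times}\bigr)^{2},
\]
equivalently that the quartic field $\Q(\sqrt{2},\sqrt{a+b\sqrt{2}})$ sits inside $\Q(\sqrt{2},\sqrt{p},\sqrt{rv})$ for a suitable $v$. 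I would verify this by writing out the decomposition of the primes above $2$, $p$, and $r$ in the quadratic subfields of $K_1$ (equivalently, the relevant quadratic Hilbert symbols), reducing each local square condition to a rational Legendre symbol; it is precisely the hypothesis $\left(\tfrac{a}{p}\right) = -1$ (equivalently $\left(\tfrac{b}{p}\right) = 1$) that determines the splitting of $p$ in $\Q(\sqrt{2},\sqrt{a+b\sqrt{2}})$ and thereby makes the displayed membership hold once $v$ is chosen from the fundamental units $1 + \sqrt{2}$, $\eps_p$, $\eps_{2p}$ (and $-1$).

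The main obstacle is this last step, and inside it the unit bookkeeping: one needs the fundamental units of the quadratic subfields of $\Q_1(\sqrt{p})$, their norms under our hypotheses (in particular whether $x^2 - 2py^2 = -1$ is solvable), and the unit index of $\Q_1(\sqrt{p})$, in order to pin down $v$; granted these, the rest is a finite Legendre-symbol computation at $2$, $p$, and $r$. Should the radical criterion above not by itself detect the capitulation, the same numerical data can instead be fed into Kuroda's class number formula for $K_1/\Q(\sqrt{2})$, in which the condition on $a$ and $b$ resurfaces through the unit index; either way one reaches $\#A(K_1) = 2$, which together with $\#A(\Q_1(\sqrt{p})) = 2$ finishes the proof.
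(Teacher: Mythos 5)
Your overall strategy --- reduce to Theorem \ref{Thm2} and exhibit capitulation of the non-trivial class of $A(\Q_1(\sqrt{p}))$ in $K_1$ --- is legitimate in principle, and your derivation of the equivalence $\left(\frac{a}{p}\right)=-1\Leftrightarrow\left(\frac{b}{p}\right)=1$ is fine. But the proof has a genuine gap: the step where the hypothesis is supposed to do its work is never carried out. You reduce the theorem to producing a unit $v\in\mathcal{O}_{\Q_1(\sqrt{p})}^{\times}$ with $rv(a+b\sqrt{2})$ a square in $\Q_1(\sqrt{p})$, and then say you \emph{would} verify this by a Legendre-symbol and fundamental-unit computation, with Kuroda's class number formula as a fallback ``should the radical criterion not detect the capitulation''. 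That computation is the entire content of the theorem --- it is exactly where $\left(\frac{a}{p}\right)=-1$ must enter --- and neither route is executed, nor is it shown that either one succeeds; ``either way one reaches $\#A(K_1)=2$'' is asserted, not proved. Note also that your radical criterion is only sufficient: if $\mathfrak{P}\mathcal{O}_{K_1}=(\gamma)$, then $\gamma^2$ is $(a+b\sqrt{2})$ times a unit of $K_1$, not necessarily a unit of $\Q_1(\sqrt{p})$, so a failure of the criterion would leave you with nothing, and the Kuroda alternative is not set up in any detail.

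There is a second, related gap: you assert without proof that $A(\Q_1(\sqrt{p}))$ is generated by the class of the ramified prime $\mathfrak{P}$ above $(a+b\sqrt{2})$. This is not automatic: since the $2$-Hilbert class field is $\Q_1(\sqrt{p},\sqrt{p_1})$, the class $[\mathfrak{P}]$ is non-trivial precisely when $\mathfrak{P}$ is inert there, i.e. when $\left(\frac{p_2}{p_1}\right)=-1$, which by the computation preceding the paper's proof (together with Theorem \ref{Reciprocity}) is equivalent to the hypothesis $\left(\frac{a}{p}\right)=-1$ itself. So this assertion both needs an argument and already consumes the hypothesis; whether the same hypothesis then also forces your square condition (or the requisite unit index in Kuroda's formula) is precisely what remains open. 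For contrast, the paper proves the theorem by contradiction: assuming $\#A(K_1)=4$ it shows $\mathrm{Gal}(L(K_1)/\Q_1)$ would be a non-abelian group of order $16$ with prescribed subgroup and quotient data, and eliminates the remaining candidates $Q_8\oplus\Z/2\Z$, $D_8\curlyvee\Z/4\Z$ and $D_8\oplus\Z/2\Z$ via ramification and splitting arguments, the hypothesis entering through the fact that $p_1$ is inert in $\Q_1(\sqrt{p_2})$ (Lemmas \ref{r is inert} and \ref{rank T2}); your route, if completed, would be a genuinely different and more explicit argument, but as it stands the decisive computation is missing.
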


\begin{rmk}
In light of Theorem \ref{Thm2} and Theorem \ref{Thm3}, if $p$ satisfies the said conditions, then a non-trivial ideal of $A(\Q_1(\sqrt{p}))$ capitulates in $A(K_1)$.    
\end{rmk}

\section{Preliminaries}

\noindent 
Throughout the article, for any number field $F$, we shall use $A_{\ell}(F)$ to denote the $\ell$-class group of $F$, and $A(F)$ for the 2-class group in particular. By $(2^{a_1}, \ldots, 2^{a_n})$, we mean the finite abelian group of the form $\Z/2^{a_1}\Z \oplus \cdots \oplus \Z/2^{a_n}\Z$, and the symbol `$\cong$' will be used to denote that two groups are isomorphic. 
We denote the action of $\sigma \in {\rm{Gal}}(F/K)$ on $[x] \in \mathcal{C}l_F$ by $[x]^{\sigma}$, which is also equal to $ [\sigma(x)]$. When $F/K$ is a cyclic extension of degree $\ell^n$ for any prime $\ell$, Chevalley derived a useful formula involving the subgroup of $A_{\ell}(F)$ fixed by the Galois action and the group $A_{\ell}(K)$. This formula is widely known as the genus formula, and we state it here for quadratic extensions. 

\begin{theorem}(\cite{chevalley}, \cite[Theorem 2.5]{mizu_thesis})\label{genusfor}
Let $F/K$ be a quadratic extension of number fields with Galois group $G = {\rm{Gal}}\left(F/K\right)$. Let $A(F)^{G}$ be the subgroup of $A(F)$ consisting of the ideal classes that are fixed by the action of $G$ on $A(F)$. Let $N_{F/K}$ denote the norm map from $F$ to $K$. Let $E(F)$ and $E(K)$ be the unit groups of $F$ and $K$, respectively. If $t$ is the number of places of $K$ ramified in $F$, then 
\begin{align*}
\#A(F)^{G} &= \#A(K) \times \dfrac{2^{t-1}}{\left[ E(K):E(K)\cap N_{F/K}(F^{\times}) \right]}.
\end{align*}
Let $B(F)^G$ be the subgroup of $A(F)^G$ consisting of ideal classes that contain an ideal fixed by the action of $G$ (also known as the strongly ambiguous ideal classes). Then,
\begin{align*}
\#B(F)^{G} &= \#A(K) \times \dfrac{2^{t-1}}{\left[ E(K): N_{F/K}(E(F)) \right]}.
\end{align*}
\end{theorem}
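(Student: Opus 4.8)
The statement is classical (Chevalley's ambiguous class number formula in the quadratic case), so what follows is a sketch of how I would re-prove it. Throughout, $G=\langle\sigma\rangle\cong\Z/2\Z$, and the argument runs through the two basic $G$-module exact sequences attached to $F$,
$$1\longrightarrow E(F)\longrightarrow F^{\times}\longrightarrow P_F\longrightarrow 1,\qquad 1\longrightarrow P_F\longrightarrow I_F\longrightarrow \mathcal{C}l_F\longrightarrow 1,$$
where $I_F$ and $P_F$ are the groups of fractional and of principal ideals of $F$. Writing $\mathcal{C}l_F'$ for the prime-to-$2$ part of $\mathcal{C}l_F$, it is convenient to prove the two displayed identities with the $2$-class group $A$ replaced by the full class group $\mathcal{C}l$ and then divide by $\#\mathcal{C}l_K'$: the $2$-Sylow subgroup $A(F)$ is a characteristic (hence $G$-stable) direct summand of $\mathcal{C}l_F$, and since multiplication by $2$ is invertible on $\mathcal{C}l_F'$ while $N_{F/K}\iota=2$ and $\iota N_{F/K}$ acts as $2$ on $(\mathcal{C}l_F')^{G}$, the extension-of-ideals map identifies $(\mathcal{C}l_F')^{G}$ with $\mathcal{C}l_K'$ and every such class is strongly ambiguous; as also $\#\mathcal{C}l_K=\#A(K)\cdot\#\mathcal{C}l_K'$ and $(K^{\times})^{2}\subseteq N_{F/K}(F^{\times})$ makes both unit indices in the statement powers of $2$, this reduction is consistent.

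First I would count the strongly ambiguous classes. Taking $G$-invariants of the second sequence exhibits the strongly ambiguous subgroup as the image of $I_F^{G}$ in $\mathcal{C}l_F^{G}$, that is, $I_F^{G}/(I_F^{G}\cap P_F)$. The group $I_F^{G}$ is generated by the extensions $\iota(\mathfrak{a})$ of ideals $\mathfrak{a}$ of $K$ together with the ramified primes $\mathfrak{P}$ of $F$, and $\mathfrak{P}^{2}=\iota(\mathfrak{p})$ for each ramified prime, so $I_F^{G}/\iota(I_K)\cong(\Z/2\Z)^{t_0}$ with $t_0$ the number of finite ramified primes. The $G$-invariant ideals that are principal are governed by the homomorphism $\alpha\mapsto\sigma(\alpha)/\alpha$, which carries $\{\alpha\in F^{\times}:\sigma(\alpha)/\alpha\in E(F)\}$ onto $\{u\in E(F):N_{F/K}(u)=1\}$ (surjectivity being Hilbert's Theorem~90) with kernel $K^{\times}$. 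An index computation combining these two facts, and absorbing the archimedean ramified places through the sign conditions that norms of units must satisfy, gives
$$\#(\text{strongly ambiguous classes of }\mathcal{C}l_F)=\#\mathcal{C}l_K\cdot\frac{2^{t-1}}{[E(K):N_{F/K}(E(F))]}.$$

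Next I would pass from the strongly ambiguous classes to all ambiguous classes $\mathcal{C}l_F^{G}$. The long exact $G$-cohomology sequence of $1\to P_F\to I_F\to\mathcal{C}l_F\to 1$ gives $\mathcal{C}l_F^{G}/(\text{strongly ambiguous})\cong H^{1}(G,P_F)$, since $I_F$ is a direct sum of permutation ($G$-induced) modules, whence $H^{1}(G,I_F)=0$. Feeding in $1\to E(F)\to F^{\times}\to P_F\to 1$ together with $H^{1}(G,F^{\times})=0$ (Hilbert~90) identifies $H^{1}(G,P_F)$ with $\ker\!\big(\widehat{H}^{0}(G,E(F))\to\widehat{H}^{0}(G,F^{\times})\big)$, which by the order-$2$ periodicity of Tate cohomology equals $\big(E(K)\cap N_{F/K}(F^{\times})\big)/N_{F/K}(E(F))$. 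Multiplying by the previous display and using $[E(K)\cap N_{F/K}(F^{\times}):N_{F/K}(E(F))]=[E(K):N_{F/K}(E(F))]/[E(K):E(K)\cap N_{F/K}(F^{\times})]$ gives
$$\#\mathcal{C}l_F^{G}=\#\mathcal{C}l_K\cdot\frac{2^{t-1}}{[E(K):E(K)\cap N_{F/K}(F^{\times})]},$$
and dividing both displays by $\#\mathcal{C}l_K'$, as arranged in the first paragraph, yields the two asserted formulas.

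The step I expect to be the \emph{main obstacle} is the index bookkeeping in the second paragraph, in particular the correct handling of the archimedean ramified places: these are counted in $t$ but contribute no ramified prime ideals, so the exponent $2^{t-1}$ emerges only after one tracks their effect on which units are everywhere local norms (equivalently, through the precise structure of $\{u\in E(F):N_{F/K}(u)=1\}$ and of $E(K)\cap N_{F/K}(F^{\times})$), and one must work consistently with the wide class group (rather than the narrow one) throughout. Minor points to verify are $H^{1}(G,I_F)=0$, the Tate-cohomology identifications of $\widehat{H}^{0}(G,E(F))$ and $\widehat{H}^{0}(G,F^{\times})$, and that every index appearing is a $2$-group, which is what makes the reduction of the first paragraph legitimate.
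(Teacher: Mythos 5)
The paper itself gives no proof of this statement: it is imported wholesale from Chevalley and from Mizusawa's thesis, so there is no internal argument to compare yours against. Your outline is the standard cohomological derivation of the ambiguous class number formula, and most of it is sound: the reduction from $\mathcal{C}l$ to the $2$-part (using that both unit indices are $2$-powers and that norm followed by extension identifies the odd invariant classes with $\mathcal{C}l_K'$, all of them strongly ambiguous), the identification of the strongly ambiguous subgroup with the image of $I_F^G$, the computation $I_F^G/\iota(I_K)\cong(\Z/2\Z)^{t_0}$ with $t_0$ the number of finite ramified primes, and the passage from strongly ambiguous to all ambiguous classes via $H^1(G,P_F)\cong\bigl(E(K)\cap N_{F/K}(F^{\times})\bigr)/N_{F/K}(E(F))$ using $H^1(G,I_F)=0$ and Hilbert 90 are all correct and are exactly how the cited sources proceed.

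The gap sits precisely where you flagged it, and it is a genuine one rather than mere bookkeeping. Your two facts in the second paragraph give, via $(\alpha)\mapsto\sigma(\alpha)/\alpha$ and $P_F^G/\iota(P_K)\cong H^1(G,E(F))$, only the identity $\#\{\text{strongly ambiguous classes}\}=2^{t_0}\cdot\#\mathcal{C}l_K\,/\,\#H^1(G,E(F))$. To convert this into the stated formula you need $\#H^1(G,E(F))=[E(K):N_{F/K}(E(F))]\cdot 2^{\,1-t_\infty}$, where $t_\infty$ is the number of ramified real places; equivalently, that the Herbrand quotient of the unit group is $Q(E(F))=2^{\,t_\infty-1}$. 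That computation is the one place where Dirichlet's unit theorem (or an equivalent analysis of units at the infinite places) enters, and it is exactly what makes the exponent $t-1$ count archimedean as well as finite ramification. The phrase about ``absorbing the archimedean ramified places through the sign conditions that norms of units must satisfy'' names the phenomenon but supplies no argument, so as written the formula with $2^{t-1}$ is not established. Supplying the Herbrand quotient $Q(E(F))=2^{\,t_\infty-1}$ (for a quadratic extension, $Q(E(F))=\tfrac12\prod_{v\mid\infty}[F_w:K_v]$) closes the proof and recovers both displayed identities.
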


\noindent By $\text{rank}_{\ell} A_{\ell}(K)$ or $\ell$-rank of $A_{\ell}(K)$, we mean the dimension of $A_{\ell}(K)/\ell A_{\ell}(K)$ as a vector space over the field $\Z/\ell\Z$. The following proposition connects the 2-rank of $A(K)$ with the genus formula.

\begin{propn}\cite[Proposition 2.2]{L-S3}\label{rmk to genus}
Suppose $F/K$ is a quadratic extension of number fields such that the image of the lifting map $j: A(K) \rightarrow A(F)$ is trivial. Then, the nontrivial element of $G={\rm{Gal}}(F/K)$ acts as $-1$ on $A(F)$. In that case, $A(F)^{G}$ is the subgroup of elements of order $2$. Consequently, $$\# A(F)^{G} = \# \left( A(F)/2A(F)\right) = 2^{\ {\rm{rank}}_2A(F)}.$$
\end{propn}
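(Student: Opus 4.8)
The plan is to use the standard factorization of the ideal norm through the Galois action. Write $G=\langle\sigma\rangle$ and use additive notation for the abelian group $A(F)$. First I would observe that for any class $[\mathfrak a]\in A(F)$ the product ideal $\mathfrak a\,\sigma(\mathfrak a)$ is exactly the extension $\big(N_{F/K}\mathfrak a\big)\mathcal O_F$ to $F$ of the ideal-theoretic norm $N_{F/K}\mathfrak a$; hence, since the lifting map $j$ is extension of ideals, $(1+\sigma)[\mathfrak a]=j\big([N_{F/K}\mathfrak a]\big)$ lies in the image of $j$. By hypothesis that image is trivial, so $(1+\sigma)[\mathfrak a]=0$ for every $[\mathfrak a]\in A(F)$, which is precisely the assertion that $\sigma$ acts as $-1$ on $A(F)$.

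Given this, I would identify the fixed subgroup. Since $\sigma$ acts as $-1$, a class $[\mathfrak a]$ is fixed by $G$ if and only if $[\mathfrak a]=-[\mathfrak a]$, i.e. $2[\mathfrak a]=0$; thus $A(F)^{G}$ is the subgroup of elements of order dividing $2$. Finally, writing the finite abelian $2$-group $A(F)$ as $\bigoplus_{i=1}^{k}\Z/2^{a_i}\Z$ with $k=\mathrm{rank}_2 A(F)$, both the $2$-torsion subgroup and the quotient $A(F)/2A(F)$ are isomorphic to $(\Z/2\Z)^{k}$, so comparing orders gives $\#A(F)^{G}=\#\big(A(F)/2A(F)\big)=2^{\,\mathrm{rank}_2 A(F)}$.

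There is no serious obstacle here: the argument is formal once one is careful with conventions. The only point needing a clean statement is the identification $\mathfrak a\,\sigma(\mathfrak a)=\big(N_{F/K}\mathfrak a\big)\mathcal O_F$ together with the fact that $j$ is extension of ideals, which is what makes $(1+\sigma)[\mathfrak a]$ visibly lie in $\operatorname{im} j$; everything else is bookkeeping with the $\Z[G]$-action and the structure theorem for finite abelian groups.
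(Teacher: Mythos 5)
Your argument is correct and is essentially the standard one behind the cited result: the identity $\mathfrak a\,\sigma(\mathfrak a)=\bigl(N_{F/K}\mathfrak a\bigr)\mathcal O_F$ shows $(1+\sigma)$ factors through $j\circ N_{F/K}$, so triviality of $\operatorname{im}(j)$ forces $\sigma$ to act as $-1$, after which the identification of $A(F)^{G}$ with the $2$-torsion and the rank count are immediate. This matches the approach of the proof in the reference the paper cites, so there is nothing further to flag.
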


\noindent If the class number of $K$ is odd, then its $2$-class group is trivial. For a quadratic extension $F/K$ with such a $K$, Proposition \ref{rmk to genus} implies that  
\begin{equation*}
2^{\ {\rm{rank}}_2A(F)} = \dfrac{ 2^{t-1}}{\left[E(K):E(K)\cap N_{F/K}(F^{\times})\right]},
\end{equation*}

For any number field $F$, we let $L(F)$ to be the 2-Hilbert class field of $F$. One of the important subfields of $L(F)$ containing $F$ is the genus field $F_G$ of $F$. For an arbitrary extension of number fields $F/K$, the genus field $F_G$ of $F$ is defined to be the maximal field of type $LF$ such that $L/K$ is abelian and $LF/F$ is unramified. For an abelian extension $F/\Q$, the genus field of $F$ is the maximal unramified extension of $F$ that is abelian over $\Q$, and hence is naturally contained in the Hilbert class field $H(F)$ of $F$. If $F/\Q$ is a quadratic extension, then the narrow genus field $F_G^{+}$ is the maximal abelian extension of $\Q$ contained in the narrow $2$-Hilbert class field of $F$. If $F$ is a real quadratic field, $F_G$ is the maximal real subfield of $F_G^{+}$ and otherwise, $F_G = F_G^{+}$ (cf. \cite[Chapter 6, Theorem 3.2]{janusz}). When $F = \Q(\sqrt{d})$, we express the prime factorization of the discriminant of $F$ by $D_{F} = \pm 2^{e}p_1^{\ast}\cdots p_t^{\ast}$, where $e = 0,2$ or $3$, and
\begin{equation*}
  p_i^{\ast} = 
      \begin{cases}
        p_i & \text{if} \  p_i \equiv 1 \Mod 4\\
        -p_i & \text{if}\ p_i \equiv 3 \Mod 4.
      \end{cases}     
\end{equation*}
In this notation, from \cite[Chapter 6, Theorem 3.10]{janusz}, we have $F_G^{+} = \mathbb{Q}( \sqrt{d}, \sqrt{p_1^{\ast}}, \cdots, \sqrt{p_t^{\ast}})$. Moreover, for quadratic extensions, from Theorem 3.3(b) of \cite[Chapter 6]{janusz}, $F_G$ is the largest 2-elementary, unramified extension of $F$ and thus the $2$-rank of Gal($F_G/F$) is equal to the $2$-rank of $A(F)$. From this definition, it is trivial that $K = \Q(\sqrt{p}, \sqrt{r})$ is the genus field of $k = \Q(\sqrt{pr})$, where $p$ and $r$ satisfy Condition (\ref{Cond}). Inductively, we can also show that for any $n \geq 0$, $K_n/k_n$ is an unramified extension, where $K_n$ and $k_n$ are the $n$-th layers of the $\Z_2$-extension of $K$ and $k$, respectively. Fukuda and Komatsu (cf. \cite{fukuda-komatsu}), and then Nishino (cf. \cite{nishino}) studied the $\Z_2$-extension of $k = \Q(\sqrt{pr})$ with $p$ and $r$ satisfying Condition (\ref{Cond}) and proved that such fields follow Greenberg's conjecture. They explicitly found the group structure of $X(k_{\infty})$ using methods that involved ray class groups and surjectivity of the norm maps under certain conditions. We now merge their results into one and state it as follows.

\begin{theorem}(\cite[Theorem 2.2, Proposition 3.4]{fukuda-komatsu}, \cite[Theorem 2]{nishino})\label{f-k}
Let $k = \Q(\sqrt{pr})$, $p \equiv 9 \Mod{16}, \ r \equiv 3 \Mod 4, \left( \dfrac{p}{r} \right) = -1$, and $\left(\dfrac{2}{p}\right)_{4} = -1$. The Iwasawa $\lambda$-invariant corresponding to the $\Z_2$-extension of $k$ is equal to 0. For $n \geq 1$, if $A(k_n)$ denotes the 2-class group of $k_n$, then $A(k_n) \cong (2,2)$. Furthermore, the 2-Hilbert class field of $k_n$ for $n \geq 1$ is given by $L(k_n) = \Q_n(\sqrt{p}, \sqrt{r}, \sqrt{p_1})$, where $p = p_1p_2$ with $p_i \in \Q_1 = \Q(\sqrt{2})$ and $p_i > 0$.   
\end{theorem}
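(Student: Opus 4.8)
I would prove the three assertions in the order: $\mathrm{rank}_2 A(k_n) = 2$, then the exact order $\#A(k_n) = 4$ together with the identification of $L(k_n)$, from which $A(k_n) \cong (2,2)$ and $\lambda(k_\infty/k) = 0$ will both follow. As a reduction, note that since $k = \Q(\sqrt{pr})$ is abelian over $\Q$, the Ferrero--Washington theorem gives $\mu(k_\infty/k) = 0$; by Iwasawa's formula it then suffices to show that the orders $\#A(k_n)$ stabilize, which forces $\lambda = 0$. I would record two standing facts: it is classical that $A(\Q_n) = 1$ for all $n$, and, as already noted in the excerpt, $K_n = \Q_n(\sqrt p, \sqrt r)$ is unramified over $k_n$. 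Finally, since $p \equiv 9 \Mod{16}$ forces $p \equiv 1 \Mod 8$, the prime $p$ splits in $\Q_1 = \Q(\sqrt 2)$, so a factorization $p = p_1 p_2$ with $p_i \in \Q_1$, $p_i > 0$ exists; I fix such a factorization, noting that the quartic condition $\left(\frac{2}{p}\right)_4 = -1$ is what will make $\Q_n(\sqrt{p_1})/\Q_n$ unramified above $p$.

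To pin down the rank I apply Chevalley's genus formula (Theorem \ref{genusfor}) to the quadratic extension $k_n/\Q_n$. Because $A(\Q_n) = 1$, Proposition \ref{rmk to genus} reduces the computation to $2^{\mathrm{rank}_2 A(k_n)} = 2^{t_n - 1}/[E(\Q_n) : E(\Q_n) \cap N_{k_n/\Q_n}(k_n^{\times})]$, where $t_n$ is the number of places of $\Q_n$ ramified in $k_n$. Here one reads off the decomposition of $p$ and $r$ in the tower from the congruences---$p \equiv 9 \Mod{16}$ governing the splitting of $p$, and $r \equiv 3 \Mod 4$ together with $\left(\frac{p}{r}\right) = -1$ governing $r$---counts the ramified primes above $2$, $p$, and $r$, and evaluates the unit index via Hilbert symbols over $\Q_n$. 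The expected outcome is $\mathrm{rank}_2 A(k_n) = 2$ for every $n \geq 1$, so in particular $\#A(k_n) \geq 4$.

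Next I would identify the Hilbert class field and control the order. The field $M_n := \Q_n(\sqrt p, \sqrt r, \sqrt{p_1}) = k_n(\sqrt p, \sqrt{p_1})$ is a $(2,2)$-extension of $k_n$; verifying that $M_n/k_n$ is unramified and abelian---the $\sqrt p, \sqrt r$ part being the genus field $K_n$ and the $\sqrt{p_1}$ part unramified by the check set aside above---places $M_n \subseteq L(k_n)$. It remains to establish the matching bound $\#A(k_n) \leq 4$, equivalently that $A(k_n)$ has exponent $2$; this simultaneously yields $A(k_n) \cong (2,2)$ and $L(k_n) = M_n$. For this I would follow the ray-class-group and norm-map strategy of Fukuda--Komatsu and Nishino: establish surjectivity of the norm maps $A(k_{n+1}) \to A(k_n)$, verify $\#A(k_{n_0}) = \#A(k_{n_0+1}) = 4$ at a base layer $n_0$ beyond which the relevant primes are totally ramified, and then invoke a stabilization criterion of Fukuda \cite{fukuda}, valid since $\mu = 0$, to conclude $\#A(k_n) = 4$ for all $n \geq 1$. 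With the order constant, Iwasawa's formula gives $\lambda(k_\infty/k) = 0$.

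The main obstacle is exactly this upper bound on the order: the genus formula controls only the $2$-rank and cannot by itself rule out a $\Z/4$-summand, so the exponent-$2$ statement must be obtained independently, through the norm-surjectivity analysis combined with the base-layer computation and Fukuda's theorem. The most delicate computational inputs are the splitting behavior of $r$ in the tower---its residue modulo $8$ is not fixed by the hypotheses, so the ramified-prime count and the unit index must be checked to balance in every case---and the verification that $\Q_n(\sqrt{p_1})/\Q_n$ is unramified above $p$, which is where the quartic condition $\left(\frac{2}{p}\right)_4 = -1$ is essential.
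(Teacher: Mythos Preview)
The paper does not prove this theorem: it is stated in the Preliminaries (Section~2) purely as a citation of \cite[Theorem 2.2, Proposition 3.4]{fukuda-komatsu} and \cite[Theorem 2]{nishino}, with no accompanying argument. There is therefore no ``paper's own proof'' to compare your proposal against.

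That said, your outline is a faithful sketch of the strategy actually used in those cited works, and the paper's later arguments (e.g.\ the proof of Proposition~\ref{A(Q_n(p)}) confirm several of the ingredients you isolate: the factorization $p = p_1 p_2$ in $\Q_1$ with $p_i$ totally positive, and the role of the condition $\left(\frac{2}{p}\right)_4 = -1$ in making the relevant extension unramified. One correction: the quartic condition is not what makes $\Q_n(\sqrt{p_1})/\Q_n$ unramified \emph{above $p$}---that extension is certainly ramified at $p_1$. Rather, as the paper records (quoting \cite[Lemma~1 and proof of Theorem~2]{nishino}), the condition $\left(\frac{2}{p}\right)_4 = -1$ forces $p_i \equiv 1$ or $(1+\sqrt 2)^2 \pmod 4$ in $\Q_1$, which is exactly what makes $\Q_1(\sqrt{p_i})/\Q_1$ unramified \emph{above $2$}. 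This is the check you need for $M_n/k_n$ to be unramified, since the ramification above $p$ is already absorbed by the genus-field step $K_n/k_n$.
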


Considering the primes $p$ and $r$ individually, we present the results on the Iwasawa modules of $\Q(\sqrt{r})$ and $\Q(\sqrt{p})$, respectively.

\begin{theorem}\cite[Section 2]{ozaki-taya}\label{A(Qn_r)}
For $r \equiv 3 \Mod{4}$, the Iwasawa module $X(\Q(\sqrt{r})_{\infty})$ corresponding to the $\Z_2$-extension of $\Q(\sqrt{r})$ is trivial.   
\end{theorem}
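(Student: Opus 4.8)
The plan is to reduce the statement to the claim that $A(\Q_n(\sqrt r)) = 0$ for every $n \geq 0$, since by definition $X(\Q(\sqrt r)_{\infty}) = \displaystyle\lim_{\substack{\longleftarrow \\ n}}A(\Q_n(\sqrt r))$. Throughout I take $r$ to be an odd prime, as in Condition (\ref{Cond}); some such hypothesis is needed, since e.g. $A(\Q(\sqrt{15})) \cong \Z/2\Z$. I would prove the vanishing by induction on $n$.

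For the base case $n = 0$ one needs $A(\Q(\sqrt r)) = 0$. The extension $\Q(\sqrt r)/\Q$ ramifies at exactly the two primes $2$ and $r$ ($2$ ramifies because $r \equiv 3 \Mod 4$), so by genus theory the narrow $2$-class group of $\Q(\sqrt r)$ has $2$-rank $t - 1 = 1$ and hence is cyclic. Since $x^2 - r y^2 \equiv -1 \Mod 4$ is unsolvable, the fundamental unit of $\Q(\sqrt r)$ has norm $+1$, so $A(\Q(\sqrt r))$ is the quotient of the narrow $2$-class group by its unique subgroup of order $2$; it therefore suffices to show that the narrow $2$-class group has order $2$, i.e. that its $4$-rank vanishes, and this follows from R\'{e}dei's reciprocity because the governing R\'{e}dei matrix is nonzero (here $\left(\dfrac{-1}{r}\right) = -1$). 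Alternatively one may simply quote the classical fact that $\Q(\sqrt p)$ has odd class number for every prime $p$.

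For the inductive step, assume $A(\Q_n(\sqrt r)) = 0$ and consider the quadratic extension $\Q_{n+1}(\sqrt r)/\Q_n(\sqrt r)$, with Galois group $G$. The crucial local input is that $2$ is totally ramified in $\Q_n(\sqrt r)/\Q$ for every $n \geq 0$: this is clear for $n = 0$, and for $n \geq 1$ the three quadratic subfields of $\Q_n(\sqrt r)$ are $\Q(\sqrt 2)$, $\Q(\sqrt r)$ and $\Q(\sqrt{2r})$, the prime $2$ ramifies in each of them (using $r \equiv 3 \Mod 4$), so the inertia subgroup of $2$ in ${\rm{Gal}}(\Q_n(\sqrt r)/\Q)$ lies in none of the three maximal subgroups and must be the whole group. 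Consequently $\Q_n(\sqrt r)$ has a single prime above $2$, and since $\Q_{n+1}/\Q_n$ is unramified outside $2$ this is the only prime of $\Q_n(\sqrt r)$ that ramifies in $\Q_{n+1}(\sqrt r)$. Feeding $\#A(\Q_n(\sqrt r)) = 1$ and $t = 1$ into Chevalley's formula (Theorem \ref{genusfor}) for $\Q_{n+1}(\sqrt r)/\Q_n(\sqrt r)$ gives $\#A(\Q_{n+1}(\sqrt r))^{G} = 2^{0}/[E(\Q_n(\sqrt r)):E(\Q_n(\sqrt r))\cap N_{\Q_{n+1}(\sqrt r)/\Q_n(\sqrt r)}(\Q_{n+1}(\sqrt r)^{\times})] \leq 1$, so the genus group is trivial. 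As a nontrivial finite $2$-group always has nontrivial fixed points under an action of a $2$-group, it follows that $A(\Q_{n+1}(\sqrt r)) = 0$, which completes the induction and hence shows $X(\Q(\sqrt r)_{\infty}) = 0$.

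The step I expect to require the most care is the ramification bookkeeping just described: the whole argument rests on there being exactly one prime above $2$ in $\Q_n(\sqrt r)$, which is what makes $t = 1$ and lets the genus formula annihilate the entire group with no control at all on unit norms. Comparing $\Q_n(\sqrt r)$ directly with $\Q_n$ would instead pull the prime(s) above $r$ into $t$ and would demand a delicate analysis of the norms of units of $\Q_n(\sqrt r)$ down to $\Q_n$; passing through the filtration step $\Q_n(\sqrt r) \subset \Q_{n+1}(\sqrt r)$ is precisely what sidesteps this. The remaining ingredients — the sign of the norm of the fundamental unit, the $4$-rank computation in the base case, and the fixed-point statement for $2$-groups — are all routine.
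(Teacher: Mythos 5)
Your proposal is correct. Note that the paper itself gives no proof of this statement: it is quoted from Ozaki--Taya (Section 2 of that reference), so there is no internal argument to compare against. Your argument is essentially the classical one underlying that citation: for $r$ prime with $r \equiv 3 \Mod 4$ the base field $\Q(\sqrt r)$ has odd class number (genus theory plus the vanishing of the $4$-rank, or the standard fact that $\Q(\sqrt p)$ has odd class number for any prime $p$), the prime $2$ is totally ramified in $\Q_n(\sqrt r)/\Q$ because all three quadratic subfields $\Q(\sqrt 2)$, $\Q(\sqrt r)$, $\Q(\sqrt{2r})$ are ramified at $2$, and then Chevalley's formula with $t=1$ together with the fixed-point property of $2$-group actions propagates triviality of $A(\Q_n(\sqrt r))$ up the tower, hence $X(\Q(\sqrt r)_\infty)=0$. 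Your restriction to $r$ prime is the right reading of the statement (it is how $r$ is used throughout the paper, via Condition (1)), and you are right to flag that the literal statement fails for composite $r$. The ramification bookkeeping you single out as the delicate point is indeed the crux, and your treatment of it is sound.
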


\begin{theorem}\cite[Theorems 3.8, 4.1]{mouhib-mova}\label{mouhib-mova}
Let $p \equiv 1 \Mod{8}$ and $\left( \dfrac{2}{p} \right)_4 = -1$. Then, the Iwasawa module $X(\Q(\sqrt{p})_{\infty})$ is cyclic with $\lambda = 0$.    
\end{theorem}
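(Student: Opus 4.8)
The plan is to deduce Theorem \ref{Thm3} from the capitulation criterion of Theorem \ref{Thm2} specialized to $n=1$. By Theorem \ref{Thm2}, once we know $\#A(\Q_1(\sqrt p)) = 2$, the equality $\#A(K_1) = \#A(\Q_1(\sqrt p))$ is equivalent to the capitulation of the unique nontrivial ideal class of $A(\Q_1(\sqrt p))$ in $K_1$. So the argument splits into two parts: first pin down the structure of $A(\Q_1(\sqrt p))$, then show that the hypothesis $\left(\frac{a}{p}\right)=-1$ forces this capitulation.

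First I would establish $A(\Q_1(\sqrt p)) \cong (2)$. Since $h(\Q(\sqrt2))=1$, the lifting map $A(\Q(\sqrt2))\to A(\Q_1(\sqrt p))$ is trivial, so Proposition \ref{rmk to genus} applies to $\Q_1(\sqrt p)/\Q(\sqrt2)$. As $p\equiv 1\Mod 8$ the prime $2$ splits, and the only ramified places are the two primes $\mathfrak p_1,\mathfrak p_2$ of $\Q(\sqrt2)$ above $p$, so $t=2$; writing $p=a^2-2b^2=(a+b\sqrt2)(a-b\sqrt2)$ fixes $\mathfrak p_1=(a+b\sqrt2)$. The genus formula then gives $\mathrm{rank}_2 A(\Q_1(\sqrt p))=1$ provided $1+\sqrt2$ is a norm from $\Q_1(\sqrt p)$, which a Hilbert-symbol computation at $\mathfrak p_1,\mathfrak p_2$ reduces to a residue condition equivalent to $p\equiv 9\Mod{16}$ together with $\left(\frac{2}{p}\right)_4=-1$. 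To upgrade the rank to the order, I would combine the cyclicity of $X(\Q(\sqrt p)_\infty)$ (Theorem \ref{mouhib-mova}) with Kuroda's class number formula for the biquadratic field $\Q(\sqrt2,\sqrt p)$, whose quadratic subfields $\Q(\sqrt2),\Q(\sqrt p),\Q(\sqrt{2p})$ have $2$-class numbers $1,1,2$; the resulting unit index yields $\#A(\Q_1(\sqrt p))=2$. This also identifies the nontrivial class as $c=[\mathfrak P_1]$ with $\mathfrak P_1\mid \mathfrak p_1$, and the $2$-Hilbert class field as $\Q(\sqrt2,\sqrt{a+b\sqrt2})$, in which $\mathfrak P_1=(\sqrt{a+b\sqrt2})$ becomes principal.

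Next comes the capitulation of $c$ in $K_1=\Q_1(\sqrt p)(\sqrt r)$. Because $\mathfrak P_1^2=(a+b\sqrt2)$ is principal in $\Q_1(\sqrt p)$, the class $c$ capitulates exactly when $\mathfrak P_1\mathcal O_{K_1}$ is principal, equivalently when $a+b\sqrt2\in E(K_1)\cdot(K_1^\times)^2$. By Theorem \ref{f-k} the extension $K_1(\sqrt{a+b\sqrt2})$ equals the $2$-Hilbert class field $L(k_1)$ of $k_1=\Q(\sqrt2,\sqrt{pr})$, and is unramified over $K_1$; via the Artin map for $L(k_1)/K_1$ the image of $j(c)$ in $A(K_1)/2A(K_1)$ is the Frobenius of $\mathfrak P_1\mathcal O_{K_1}$. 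Since $\mathfrak P_1$ is inert in $\Q_1(\sqrt p)(\sqrt{a+b\sqrt2})/\Q_1(\sqrt p)$ (its Frobenius is $c\neq 0$), a residue computation shows this Frobenius is trivial precisely when $\mathfrak P_1$ is inert in $K_1/\Q_1(\sqrt p)$, i.e. when $\left(\frac{r}{\mathfrak P_1}\right)=\left(\frac{r}{p}\right)=-1$; by Condition (\ref{Cond}) and quadratic reciprocity (using $p\equiv 1\Mod 4$) this equals $\left(\frac{p}{r}\right)=-1$ and so holds. Hence $j(c)\in 2A(K_1)$, which settles the capitulation once we know $A(K_1)$ is not $(4)$.

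The main obstacle is exactly this last dichotomy. The Artin computation above shows only that $j(c)$ lies in $2A(K_1)$, so it reduces the problem to deciding whether the cyclic group $A(K_1)$ is $(2)$ or $(4)$, and the input $\left(\frac{r}{p}\right)=-1$ alone does not distinguish the two. To resolve it I would apply the ambiguous class number formula (Theorem \ref{genusfor}) to $K_1/\Q_1(\sqrt p)$ and compare $A(K_1)^G$ with the strongly ambiguous classes $B(K_1)^G$: one has $\#A(K_1)=2$ if and only if the unit-norm index $\left[E(\Q_1(\sqrt p))\cap N_{K_1/\Q_1(\sqrt p)}(K_1^\times):N_{K_1/\Q_1(\sqrt p)}(E(K_1))\right]$ equals $2$. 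I expect the delicate point to be controlling the units of the octic field $K_1$ and showing that the relevant unit, built from $1+\sqrt2$ and the fundamental unit of $\Q(\sqrt p)$, is an element-norm but not a unit-norm precisely when $\left(\frac{a}{p}\right)=-1$. Here $\left(\frac{a}{p}\right)$ should enter through the congruence $a^2\equiv 2b^2\Mod p$, which lets one write $\sqrt2\equiv a/b$ in $\kappa(\mathfrak P_1)=\F_p$ and thereby evaluate the pertinent residue of $1+\sqrt2$; the dependence on $r$ is expected to cancel because $\left(\frac{p}{r}\right)=-1$. Granting this, Theorem \ref{Thm2} gives $\#A(K_1)=\#A(\Q_1(\sqrt p))=2$.
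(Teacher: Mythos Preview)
Your proposal does not address the stated theorem. Theorem \ref{mouhib-mova} is the cited result of Mouhib and Movahhedi asserting that for $p\equiv 1\Mod 8$ with $\left(\frac{2}{p}\right)_4=-1$ the Iwasawa module $X(\Q(\sqrt{p})_\infty)$ is cyclic with $\lambda=0$. The paper does not prove this; it is quoted from \cite{mouhib-mova} as background input, so there is no in-paper argument to compare against. What you have written is instead an attempted proof of Theorem \ref{Thm3} (the equality $\#A(K_1)=\#A(\Q_1(\sqrt p))=2$ under the hypothesis $\left(\frac{a}{p}\right)=-1$), which is an entirely different statement.

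Even read as a sketch for Theorem \ref{Thm3}, the argument has a structural gap that makes it circular. You propose to invoke the capitulation criterion of Theorem \ref{Thm2}, but your own Artin-map computation only yields $j(c)\in 2A(K_1)$, which is vacuous unless you already know $A(K_1)\cong(2)$; you then acknowledge that the real problem is to rule out $A(K_1)\cong(4)$ and propose to do so directly via the unit-norm index in $K_1/\Q_1(\sqrt p)$. But once you can compute $\#A(K_1)$ directly, the capitulation detour through Theorem \ref{Thm2} contributes nothing. Moreover, the proposed unit calculation---deciding whether a specific unit of $\Q_1(\sqrt p)$ built from $1+\sqrt 2$ and the fundamental unit of $\Q(\sqrt p)$ is an element-norm but not a unit-norm from the octic field $K_1$---is precisely the hard step, and it is left entirely unexecuted: you neither identify $E(K_1)$ nor explain how the residue of $1+\sqrt 2$ at $\mathfrak P_1$ converts into the required index condition, or why the dependence on $r$ ``is expected to cancel.'' The paper's actual proof of Theorem \ref{Thm3} bypasses unit computations altogether: it assumes $\#A(K_1)=4$, pins down $\mathrm{Gal}(L(K_1)/\Q_1)$ as a nonabelian group of order $16$ with prescribed subgroup and quotient constraints, and then eliminates every surviving candidate ($Q_8\oplus\Z/2\Z$, $D_8\curlyvee\Z/4\Z$, $D_8\oplus\Z/2\Z$) by ramification arguments together with Lemmas \ref{r is inert} and \ref{rank T2}; the hypothesis $\left(\frac{a}{p}\right)=-1$ enters only through the inertness of $\langle p_1\rangle$ in $\Q_1(\sqrt{p_2})/\Q_1$.
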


One of the most intriguing aspects of a $\Z_{\ell}$-extension is that the prime(s) above $\ell$ is(are) the only ramified prime(s) in the extension. Also, there exists $n_0 \geq 0$ such that each ramified prime is totally ramified in $F_{\infty}/F_{n_0}$ (cf. \cite[Proposition 13.2, Lemma 13.3]{washington_book}). The vanishing of Iwasawa $\lambda$ and $\mu$ invariants is closely associated with the order and rank stability of the $\ell$-class groups. We now state a result proved by Fukuda that involves stability and has been a key ingredient in many works involving the study of Iwasawa modules. 

\begin{theorem}\cite[Theorem 1]{fukuda}\label{fukuda's result}
Let $\ell$ be a prime number. Let $F$ be a number field and let $F_{\infty}/F$ be a $\Z_{\ell}$-extension of $F$. Let $A_{\ell}(F_n)$ denote the $\ell$-class group of $F_n$ in the extension $F_\infty/F$. Let $n_0 \geq 0$ be an integer such that any prime of $F_{\infty}$ that is ramified in $F_{\infty}/F$ is totally ramified in $F_{\infty}/F_{n_0}$. Then the following statements hold. 
\begin{enumerate}
\item If there exists an integer $n \geq n_0$ such that $\#A_{\ell}(F_{n+1}) = \#A_{\ell}(F_n)$, then $\#A_{\ell}(F_m) = \#A_{\ell}(F_n)$ for all $m \geq n$. In particular, both Iwasawa invariants $\mu(F_\infty/F)$ and $\lambda(F_\infty/F)$ vanish. 
  
  \smallskip
  
\item If there exists an integer $n \geq n_0$ such that ${\rm{rank}}_{\ell}A_{\ell}(F_{n+1}) = {\rm{rank}}_{\ell}A_{\ell}(F_{n})$, then ${\rm{rank}}_{\ell}A_{\ell}(F_{m}) = {\rm{rank}}_{\ell}A_{\ell}(F_{n})$ for all $m \geq n$. In particular, the Iwasawa invariant $\mu(F_\infty/F)$ vanishes.
\end{enumerate}
\end{theorem}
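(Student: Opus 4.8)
The plan is to recast both statements as properties of the Iwasawa module and to close each one with a single application of Nakayama's lemma. After passing from $F_\infty/F$ to $F_\infty/F_{n_0}$ and relabelling, I may assume $n_0=0$, so that every prime of $F_\infty$ ramified in $F_\infty/F$ is in fact totally ramified in $F_\infty/F$; this relabelling leaves $\mu$ and $\lambda$ unchanged. Write $\Gamma=\mathrm{Gal}(F_\infty/F)=\overline{\langle\gamma\rangle}\cong\Z_\ell$, set $T=\gamma-1$, and let $\Lambda=\Z_\ell[[T]]$ with maximal ideal $\mathfrak m=(\ell,T)$. Put $X=\varprojlim_m A_\ell(F_m)$; by Iwasawa's structure theorem this is a finitely generated torsion $\Lambda$-module (unconditionally). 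Writing $\omega_m=(1+T)^{\ell^m}-1=\gamma^{\ell^m}-1$, I would first record the standard consequence of the total-ramification hypothesis (Iwasawa; cf.\ \cite[Ch.\ 13]{washington_book}): for every $m\ge 0$ there is a natural identification of $A_\ell(F_m)$ with $X/\omega_m X$, corrected by a finitely generated submodule $Y\subseteq X$ built from the inertia subgroups of the ramified primes, where crucially $Y$ is independent of $m$.

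Next I would establish the one class-field-theoretic input that drives everything: for each $m\ge 0$ the norm $N_m=N_{F_{m+1}/F_m}\colon A_\ell(F_{m+1})\to A_\ell(F_m)$ is surjective. Indeed, its cokernel is isomorphic to $\mathrm{Gal}\big((F_{m+1}\cap H_m)/F_m\big)$, where $H_m$ is the $\ell$-Hilbert class field of $F_m$; since some prime is totally ramified in $F_{m+1}/F_m$ whereas $H_m/F_m$ is unramified, $F_{m+1}\cap H_m=F_m$ and the cokernel vanishes. Reducing this surjection modulo $\ell$ as well, one obtains the two monotonicities that the hypotheses will freeze: both $\#A_\ell(F_m)$ and $\mathrm{rank}_\ell A_\ell(F_m)$ are non-decreasing in $m$.

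For part (1) the key observation is that $\omega_m\mid\omega_{m+1}$, so $\omega_{m+1}X\subseteq\omega_m X$ and there is a surjection $X/\omega_{m+1}X\twoheadrightarrow X/\omega_m X$. If $\#A_\ell(F_{m+1})=\#A_\ell(F_m)$ for some $m\ge 0$, then through the identification above this surjection of finite groups of equal order is an isomorphism, i.e.\ $\omega_m X=\omega_{m+1}X=\psi_m\,(\omega_m X)$ with $\psi_m=\omega_{m+1}/\omega_m$. A direct computation gives $\psi_m\equiv T^{\ell^m(\ell-1)}\pmod{\ell}$, so $\psi_m\in\mathfrak m$, the Jacobson radical of the local ring $\Lambda$; Nakayama's lemma applied to the finitely generated module $\omega_m X$ then forces $\omega_m X=0$. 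Hence $X\cong X/\omega_m X\cong A_\ell(F_m)$ is finite (class groups being finite), which yields $\mu(F_\infty/F)=\lambda(F_\infty/F)=0$, and $\omega_{m'}X=0$ for all $m'\ge m$ gives $\#A_\ell(F_{m'})=\#X=\#A_\ell(F_m)$, as claimed.

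Part (2) runs in parallel after reduction modulo $\ell$. Set $\overline X=X/\ell X$, a finitely generated module over the discrete valuation ring $\mathbb{F}_\ell[[T]]$, and note $\omega_m\equiv T^{\ell^m}\pmod{\ell}$, so that $\mathrm{rank}_\ell A_\ell(F_m)=\dim_{\mathbb{F}_\ell}\big(\overline X/T^{\ell^m}\overline X\big)$. Equality of ranks at step $m$ makes the surjection $\overline X/T^{\ell^{m+1}}\overline X\twoheadrightarrow\overline X/T^{\ell^m}\overline X$ an isomorphism, and Nakayama over $\mathbb{F}_\ell[[T]]$ (with $T$ in the maximal ideal) forces $T^{\ell^m}\overline X=0$; thus $\overline X$ is finite, its free rank over $\mathbb{F}_\ell[[T]]$ must vanish, and this free rank is positive exactly when $\mu(F_\infty/F)>0$, so $\mu(F_\infty/F)=0$ and the ranks are constant for all $m'\ge m$. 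The main obstacle in this whole plan is not the Nakayama endgame but the very first step: pinning down the identification of $A_\ell(F_m)$ with $X/\omega_m X$ together with the bounded inertia correction $Y$, and verifying that this correction—being generated by the finitely many fixed elements $\sigma-1$ attached to the ramified primes—does not obstruct either equality from propagating forward. This is exactly the point at which the total-ramification hypothesis encoded in $n_0$ is indispensable.
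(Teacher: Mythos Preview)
The paper does not supply its own proof of this statement; it is quoted from \cite{fukuda} as a tool and left unproved. Your outline is in fact Fukuda's original argument, so there is nothing in the present paper to compare against beyond the citation itself.

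That said, one point in your write-up should be tightened. The identification is not $A_\ell(F_m)\cong X/\omega_m X$ in general. After the reduction to $n_0=0$, the standard result (cf.\ \cite[Lemma~13.18]{washington_book}) is $A_\ell(F_m)\cong X/\nu_m Y_0$, where $\nu_m=\omega_m/T$ and $Y_0$ is the fixed $\Lambda$-submodule of $X$ generated by $TX$ together with the elements $\sigma_i\sigma_1^{-1}$ arising from the inertia generators of the ramified primes; only when a single prime ramifies does $Y_0=TX$, and then $\nu_m Y_0=\omega_m X$ recovers your formula. In your paragraph for part~(1) you drop the correction and work literally with $\omega_m X$, which is not justified when several primes ramify. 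The Nakayama step, however, is unaffected once the correct object is used: since $\nu_{m+1}=\psi_m\,\nu_m$ with $\psi_m=\omega_{m+1}/\omega_m\in\mathfrak m$, equality of orders forces $\nu_m Y_0=\psi_m\,\nu_m Y_0$, hence $\nu_m Y_0=0$ and $A_\ell(F_{m'})\cong X$ for all $m'\ge m$; the mod-$\ell$ variant for part~(2) runs identically with $\nu_m Y_0$ in place of $\omega_m X$. You already flag this identification as the crux in your final paragraph; once it is stated precisely, your plan is complete and coincides with Fukuda's proof.
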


\noindent\textbf{A consequence of Burnside's basis theorem}: Let $G$ be an $\ell$-group for a prime $\ell$. Let $G^{\prime}$ denote the commutator subgroup of $G$, and $\phi(G)$ be the intersection of all maximal subgroups of $G$. Then, $G^{\prime} \subseteq \phi(G)$. Burnside's basis theorem states that any lift of generators of $G/\phi(G)$ will generate $G$. If $G$ is a pro-$\ell$-group such that $G/G^{\prime}$ is procyclic, then so will $G/\phi(G)$ be. As a consequence of Burnside's basis theorem, this implies that $G$ is procyclic. Applying this result on the $\ell$-class field tower of $F$, we deduce that if $L(F)/F$ is a cyclic extension, then $\tL(F)/F$ is also cyclic, and hence an abelian extension. This implies that $\tL(F) = L(F)$, and the $\ell$-class field tower of $F$ terminates at $L(F)$.

Next, we state the result by Gorenstein (cf. \cite[Chapter 5, Theorem 4.5]{gorenstein}) which characterizes all the groups $G$ of order $2^m$, where $m\geq 3$ and $G/G^{\prime} \cong (2,2)$. This result is essential because it helps us determine the 2-groups associated with 2-class field towers.

\begin{theorem}\cite[Theorem 1]{kisilevsky} \label{gorenstein result}
Let $G$ be a finite group of order $2^m$ with $m \geq 3$. Suppose $G^{\prime}$ denotes the commutator subgroup of $G$, and $G/G^{\prime} \cong (2,2)$. Then, $G$ is one of $D_{2^m}$ (dihedral group of order $2^m$), $Q_{2^m}$ (generalized quaternion group of order $2^m$), or $S_{2^m}$ (semi-dihedral group of order $2^m, m>3$). 
\end{theorem}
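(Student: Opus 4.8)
The plan is to argue by induction on $m$, peeling off a central involution that lies in $G'$ and lifting the classification from the quotient. For $m=3$ the hypothesis $G/G'\cong(2,2)$ forces $|G'|=2$, so $G$ is nonabelian of order $8$, hence $G$ is $D_8=D_{2^3}$ or $Q_8=Q_{2^3}$, as claimed. Now let $m\ge 4$, so $|G'|=2^{m-2}\ge 4$; in particular $G'$ is a nontrivial normal subgroup of the $2$-group $G$, so $Z(G)\cap G'\neq\{1\}$ and we may pick $z\in Z(G)\cap G'$ of order $2$. Set $\overline{G}:=G/\langle z\rangle$. Since $\langle z\rangle\subseteq G'$ we have $\overline{G}'=G'/\langle z\rangle$ and $\overline{G}/\overline{G}'\cong G/G'\cong(2,2)$, so the inductive hypothesis applies to $\overline{G}$, giving $\overline{G}\in\{D_{2^{m-1}},Q_{2^{m-1}},S_{2^{m-1}}\}$; in particular $\overline{G}$ contains a cyclic subgroup $\overline{C}=\langle\overline{c}\rangle$ of index $2$ (the ``rotation'' subgroup, of order $2^{m-2}$).

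The crucial point is to promote this to a cyclic subgroup of index $2$ in $G$ itself. Let $C$ be the preimage of $\overline{C}$; it has order $2^{m-1}$, and for any lift $c\in C$ of $\overline{c}$ we get $C=\langle c,z\rangle$, which is abelian because $z$ is central. Hence $C$ is isomorphic to $(2^{m-1})$ or to $(2^{m-2},2)$. I would rule out the second case as follows. If $C\cong(2^{m-2},2)$, then $c$ has order $2^{m-2}$ and $z\notin\langle c\rangle$, so $C=\langle c\rangle\times\langle z\rangle$. Choose a lift $\sigma\in G$ of a generator of $\overline{G}/\overline{C}$; since $C\trianglelefteq G$ and $z\in Z(G)$, conjugation gives $\sigma c\sigma^{-1}=c^{u}z^{\eta}$ with $u$ odd and $\eta\in\{0,1\}$. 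Then $G'=[G,G]$ is the normal closure of $[\sigma,c]=c^{u-1}z^{\eta}$, and computing the $c$- and $\sigma$-conjugates of this element inside the abelian group $C$ shows $G'=\langle c^{u-1}z^{\eta}\rangle$; as $u-1$ is even, $c^{u-1}$ has order at most $2^{m-3}$, so $|G'|\le 2^{m-3}<2^{m-2}$, contradicting $|G/G'|=4$. Therefore $C\cong(2^{m-1})$, i.e. $G$ has a cyclic subgroup of index $2$.

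With such a subgroup $C=\langle\rho\rangle$ (so $|\rho|=2^{m-1}$) fixed, write $G=\langle\rho,\sigma\rangle$ with $\sigma\notin C$; since $[G:C]=2$ we have $\sigma^{2}\in\langle\rho\rangle$ and $\sigma\rho\sigma^{-1}=\rho^{t}$ with $t^{2}\equiv 1\pmod{2^{m-1}}$, so $t\in\{1,\,-1,\,2^{m-2}-1,\,2^{m-2}+1\}$. The cases $t=1$ and $t=2^{m-2}+1$ give $|G'|=1$ and $|G'|=2$ respectively, both incompatible with $|G'|=2^{m-2}\ge 4$. If $t=-1$, then after replacing $\sigma$ by a suitable $\sigma\rho^{i}$ one has $\sigma^{2}=1$ (whence $G\cong D_{2^m}$) or $\sigma^{2}=\rho^{2^{m-2}}$ (whence $G\cong Q_{2^m}$). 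If $t=2^{m-2}-1$, a similar adjustment gives $\sigma^{2}=1$ and $G\cong S_{2^m}$ (possible only for $m\ge 4$, matching the statement). One may instead invoke the standard classification of finite $2$-groups with a cyclic subgroup of index $2$ (e.g. \cite[Chapter 5, Theorem 4.5]{gorenstein}) and discard the types whose abelianization is not $(2,2)$.

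The main obstacle is the middle paragraph: showing that the cyclic index-$2$ subgroup of $\overline{G}$ lifts to a \emph{cyclic} (not merely abelian) subgroup of $G$. This is exactly where $G/G'\cong(2,2)$ is used in an essential way, through the equality $|G'|=2^{m-2}$; one must track the $z$-contribution in the conjugation formula $\sigma c\sigma^{-1}=c^{u}z^{\eta}$ carefully and check that in the excluded case the commutator subgroup is forced to be too small. Everything else — the base case, the final case analysis on $t$, and the normalizations of $\sigma$ — is routine bookkeeping.
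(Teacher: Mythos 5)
Your proposal is correct. Note that the paper does not prove this statement at all: it is quoted verbatim from Kisilevsky (Theorem 1), who in turn refers to Gorenstein, Chapter 5, Theorem 4.5, so there is no in-paper argument to compare against. Your induction is essentially the classical proof of this result (Taussky's theorem): the base case $m=3$ is immediate, the passage to $\overline{G}=G/\langle z\rangle$ with $z$ a central involution inside $G'$ is the standard reduction, and the key step — showing that the preimage of the cyclic index-$2$ subgroup of $\overline{G}$ is itself cyclic — is carried out correctly: with $C=\langle c\rangle\times\langle z\rangle$ one gets $G'=\langle c^{u-1}z^{\eta}\rangle$ (the $\sigma$-conjugate of $c^{u-1}z^{\eta}$ is its $u$-th power, so this cyclic group is already normal), and since $u-1$ is even its order is at most $2^{m-3}$, contradicting $|G'|=2^{m-2}$. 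The concluding case analysis on $t$ with $t^2\equiv 1\pmod{2^{m-1}}$ is also sound; alternatively, as you say, one can quote the classification of $2$-groups containing a cyclic subgroup of index $2$ and discard the cyclic, abelian and modular types by their abelianizations, which is exactly how the cited source proceeds. One cosmetic remark: in the case $t=-1$ no replacement $\sigma\mapsto\sigma\rho^{i}$ is needed (or effective), since $(\sigma\rho^{i})^{2}=\sigma^{2}$ there and $\sigma^{2}$ is already forced to be $1$ or $\rho^{2^{m-2}}$; the replacement is genuinely used only in the case $t=2^{m-2}-1$, where taking $i$ odd converts $\sigma^{2}=\rho^{2^{m-2}}$ into an involution. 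This does not affect the validity of the argument.
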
  

While studying cyclic unramified extension of number fields of degree $\ell$, where $\ell$ is an odd prime, Taussky (cf. \cite{taussky cond}) introduced two conditions in terms of capitulation of ideal classes. These were further utilized by Kisilevsky for $\ell = 2$, and are now widely known as ``Taussky conditions''. Let $F/K$ be a cyclic unramified extension of a prime degree $\ell$, $j : \mathcal{C}l_K \rightarrow \mathcal{C}l_F$ be the lifting map and $N_{F/K}: \mathcal{C}l_F \rightarrow \mathcal{C}l_K$ be the norm map, respectively. Then, the Taussky conditions are given by:
\begin{enumerate}[label = (\Alph*)]
\item $\#\left( Ker(j) \cap N_{F/K}(\mathcal{C}l_F) \right) >1$
\item $\#\left( Ker(j) \cap N_{F/K}(\mathcal{C}l_F) \right) =1$.
\end{enumerate}

\noindent If $F$ is a number field whose 2-class group is isomorphic to $(2,2)$, then Taussky and Furtwrangler (cf. \cite{furtwangler}, \cite{taussky cft}) proved that the 2-class field tower of $F$ terminates at the first level $F^{(1)} = L(F)$ or at the second level $F^{(2)} = L( F^{(1)})$ of the 2-class field tower. If it is the latter case, then there exists an intermediate field $L \subseteq \tL(F)$ such that $L/F$ is an unramified non-abelian extension of degree 8. Using Taussky conditions along with certain cohomological and group theoretic arguments, Kisilevsky proved a remarkable result which helps us identify ${\rm{Gal}}(L/F)$ and ${\rm{Gal}}(F^{(2)}/F)$.

\begin{theorem}\label{Kisilevsky 1}\cite[Theorem 2]{kisilevsky}
Let $\mK$ be a number field with $A(\mK) \cong (2,2)$. Suppose $\mK^{(1)} = L(\mK)$, $\mK^{(2)} = L(\mK^{(1)})$, and $\mK_i$ and $L$ be fields satisfying:
$$\mK ~ \subset ~ \mK_1, ~ \mK_2, ~ \mK_3 ~ \subset ~ \mK^{(1)} ~ \subset ~ L ~ \subseteq ~ \mK^{(2)}.$$
Let $j_i: \mathcal{C}l(\mK) \rightarrow \mathcal{C}l(\mK_i)$ be the lifting map for $i = 1,2,3$. Then,
\begin{enumerate}
\item If $\mK^{(2)} = \mK^{(1)}$, then for each $i \in \{1,2,3\}$, $\#Ker(j_i) = 4$ and each field $\mK_i$ satisfies Condition (A).
\item If ${\rm{Gal}}(L/\mK) \cong Q_8$, where $Q_8$ is the quaternion group of order 8, then for each $i$, $\mK_i$ satisfies Condition (A) with $\#Ker(j_i) = 2$. Also, $L = \mK^{(2)}$.
\item  If ${\rm{Gal}}(L/\mK) \cong D_8$, where $D_8$ is the dihedral group of order 8, then $\mK_1, \mK_2$ satisfy Condition (B) and $\#Ker(j_1) = \#Ker(j_2)= 2$. In addition, the following hold:
\begin{enumerate}
\item If $\mK_3$ satisfies Condition (B), then $\#Ker(j_3)= 2$, and ${\rm{Gal}}(\mK^{(2)}/\mK) \cong S_{2^m}$. 
\item If $\mK_3$ satisfies Condition (A) and $\#Ker(j_3)= 2$, then ${\rm{Gal}}(\mK^{(2)}/\mK) \cong Q_{2^m}$. 
\item If $\mK_3$ satisfies Condition (A) and $\#Ker(j_3)= 4$, then ${\rm{Gal}}(\mK^{(2)}/\mK) \cong D_{2^m}$.
\end{enumerate}
\end{enumerate}
\end{theorem}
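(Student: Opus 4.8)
The statement is Kisilevsky's theorem, and the plan is to translate everything into the group theory of $G:={\rm{Gal}}(\mK^{(2)}/\mK)$, where $\mK^{(1)}=L(\mK)$ and $\mK^{(2)}=L(\mK^{(1)})$. First, since $A(\mK)\cong(2,2)$, the Furtw\"angler--Taussky result recalled above says the $2$-class field tower of $\mK$ terminates at level $\le 2$, so $\mK^{(2)}=\tL(\mK)$ is the maximal unramified $2$-extension of $\mK$: $G$ is finite, $G'={\rm{Gal}}(\mK^{(2)}/\mK^{(1)})$, $G''=1$, and $G/G'\cong A(\mK)\cong(2,2)$. If $G\cong(2,2)$ the tower stops at level $1$ and we are in case (1); otherwise $|G|=2^m$ with $m\ge 3$, and Theorem~\ref{gorenstein result} forces $G\in\{D_{2^m},Q_{2^m},S_{2^m}\}$. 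The three quadratic fields $\mK_i$ correspond to the three index-$2$ subgroups $G_i={\rm{Gal}}(\mK^{(2)}/\mK_i)$, and since $G_i''=1$ the $2$-class field tower of each $\mK_i$ also terminates inside $\mK^{(2)}$, so $A(\mK_i)\cong G_i/G_i'$ with $L(\mK_i)$ the fixed field of $G_i'$.

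Next I would set up the dictionary with class field theory. By Artin reciprocity and the usual translation of the capitulation and norm maps, $j_i\colon A(\mK)\to A(\mK_i)$ is the transfer $\mathrm{Ver}\colon G/G'\to G_i/G_i'$, while $N_{\mK_i/\mK}\colon A(\mK_i)\to A(\mK)$ is the map $gG_i'\mapsto gG'$ induced by $G_i\hookrightarrow G$, whose image is the index-$2$ subgroup $G_i/G'$ of $G/G'$. Hence $\#\mathrm{Ker}(j_i)=\#\mathrm{Ker}(\mathrm{Ver})$, and $\mK_i$ satisfies Condition (A) (resp.\ (B)) precisely when $\mathrm{Ker}(\mathrm{Ver})\cap(G_i/G')\ne 1$ (resp.\ $=1$). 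I would also identify $L$ group-theoretically: it corresponds to a normal subgroup of index $8$ with non-abelian quotient, so ${\rm{Gal}}(L/\mK)\in\{D_8,Q_8\}$ (with $L=\mK^{(2)}$ when $|G|=8$); using $Q_{2^m}/Z\cong D_{2^{m-1}}$ and $S_{2^m}/Z\cong D_{2^{m-1}}$, that quotient is $D_8$ for $D_{2^m}\ (m\ge3)$, $Q_{2^m}\ (m\ge4)$ and $S_{2^m}\ (m\ge4)$, and is $Q_8$ only when $G\cong Q_8$. Thus ${\rm{Gal}}(L/\mK)\cong Q_8$ iff $G\cong Q_8$, and ${\rm{Gal}}(L/\mK)\cong D_8$ iff $G$ is one of the other three.

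The computational heart is then to evaluate, for $G=D_{2^m},Q_{2^m},S_{2^m}$, the transfer to each of the three index-$2$ subgroups — the cyclic $\langle r\rangle\cong C_{2^{m-1}}$ and the two subgroups $\langle r^2,s\rangle,\langle r^2,rs\rangle$ (both of type $D_{2^{m-1}}$, resp.\ $Q_{2^{m-1}}$, in the dihedral and quaternion cases, and one of each type in the semidihedral case) — and intersect each kernel with the corresponding index-$2$ subgroup of $(2,2)$. A direct coset computation (using $\mathrm{Ver}(h)=h\cdot s^{-1}hs$ on $\langle r\rangle$ and the analogous two-term formula on the others) should give: for $G\cong(2,2)$, every $\mK_i$ satisfies (A) with $\#\mathrm{Ker}(j_i)=4$ (the transfer $(2,2)\to C_2$ being trivial); for $G\cong Q_8$, every $\mK_i$ satisfies (A) with $\#\mathrm{Ker}(j_i)=2$ and $L=\mK^{(2)}$; for $G\cong D_{2^m}$, the $\langle r\rangle$-field satisfies (A) with $\#\mathrm{Ker}=4$ and the other two (B) with $\#\mathrm{Ker}=2$; for $G\cong Q_{2^m}\ (m\ge4)$, the $\langle r\rangle$-field satisfies (A) with $\#\mathrm{Ker}=2$ and the other two (B) with $\#\mathrm{Ker}=2$; and for $G\cong S_{2^m}$, all three satisfy (B) with $\#\mathrm{Ker}=2$. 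Combining with the description of ${\rm{Gal}}(L/\mK)$ from the previous step, and taking $\mK_3$ to be the $\langle r\rangle$-field in part (3), yields parts (1), (2), and (3a)--(3c) respectively.

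The main obstacle I expect is the transfer bookkeeping in the last step. The cyclic-subgroup case is clean, since conjugation by $s$ inverts $\langle r\rangle$ and $\mathrm{Ver}$ reduces to $h\mapsto h\cdot s^{-1}hs$ perturbed only by $s^2$; the two non-cyclic subgroups, however, require careful tracking of exponents of $r$ modulo $4$ inside $G_i/G_i'$. In particular one must verify that the non-isomorphic dihedral and quaternion index-$2$ subgroups of $S_{2^m}$ produce \emph{identical} Taussky data, and must keep the small-order coincidences straight — the semidihedral group exists only for $m\ge4$, and $Q_{2^m}$ with $m\ge4$ gives ${\rm{Gal}}(L/\mK)\cong D_8$ rather than $Q_8$, which is precisely why the hypothesis ${\rm{Gal}}(L/\mK)\cong Q_8$ in part (2) pins $G$ down to $Q_8$ while part (3) bifurcates according to the behavior of $\mK_3$.
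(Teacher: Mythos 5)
The paper offers no proof of this statement --- it is quoted from \cite[Theorem 2]{kisilevsky} --- so there is no internal argument to compare against; judged on its own, your reconstruction is correct and is the standard group-theoretic route to Kisilevsky's result. Your dictionary is the right one: Furtw\"angler--Taussky gives $\mK^{(2)}=\tL(\mK)$, so $G={\rm{Gal}}(\mK^{(2)}/\mK)$ satisfies $G/G'\cong(2,2)$ and is $(2,2)$, $D_{2^m}$, $Q_{2^m}$ or $S_{2^m}$ by Theorem \ref{gorenstein result}; capitulation is the transfer, the norm is induced by inclusion, and since $G'=\langle r^2\rangle$ is cyclic, $L$ is forced to be the fixed field of $\langle r^4\rangle$, whence ${\rm{Gal}}(L/\mK)\cong Q_8$ exactly when $G\cong Q_8$ and $\cong D_8$ in all remaining cases. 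I checked your asserted transfer table and it is accurate: the $\langle r\rangle$-field gives kernel $4$ with Condition (A) in the dihedral case, kernel $2$ with (A) in the generalized quaternion case, and all three maximal subfields give (B) with kernel $2$ in the semidihedral case, while the two non-cyclic fields always give (B) with kernel $2$; with $\mK_3$ the field of the cyclic maximal subgroup this yields (1), (2), (3a)--(3c) and their mutual exclusivity. Two points should be made explicit in a write-up: (i) Taussky's conditions are phrased with the full class groups, so one needs the easy reduction (made in Section 4 of this paper) that $Ker(j_i)$ is a $2$-group and the $2$-part of $N_{\mK_i/\mK}(\mathcal{C}l_{\mK_i})$ equals $N_{\mK_i/\mK}(A(\mK_i))$, so the transfer computation really does decide (A) versus (B); (ii) the two-term transfer formula must actually be carried out for the two non-cyclic maximal subgroups (you only assert the outcome), in particular in the semidihedral case where one of them is dihedral and the other quaternion and yet both yield (B) with kernel $2$ --- the claims are true, but they are the computational content of the theorem. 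By contrast, Kisilevsky's original argument reaches the same table more cohomologically, through Hilbert's Theorem 94 in the form $\#Ker(j_i)=2\,[E(\mK):N(E(\mK_i))]$ together with unit-index/Herbrand-quotient considerations in the tower; the two routes are equivalent, and the Verlagerung bookkeeping you propose is arguably the more transparent one.
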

\noindent It should be noted from the proof that all these cases are mutually exclusive and exhaustive. Hence, these are the only possible combinations of Taussky conditions and order of kernels.

Mouhib and Movahhedi proved a result on real number fields whose maximal unamified 2-extension is either the quaternion group or the semidihedral group of order $2^m, m \geq 3$. We now state it as follows. 

\begin{theorem}\cite[Theorem 3.1]{mouhib-mova2}\label{mouhib-mova 2008}
Let $F$ be a number field with $X(F_{\infty}) \cong (2,2)$. Let $n_0$ be the smallest integer such that $F_{\infty}/F_{n_0}$ is totally ramified at the prime(s) above 2, and $A(F_{n_0}) \cong (2,2)$. If ${\rm{Gal}}(\tL(F_{n_0})/F_{n_0})$ is either the quaternion group or the semidihedral group, then ${\rm{Gal}}(\tL(F_{\infty})/F_{\infty}) \cong {\rm{Gal}}(\tL(F_{n_0})/F_{n_0})$. 
\end{theorem}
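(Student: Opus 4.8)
The plan is to compare the maximal unramified $2$-extensions $\tL(F_n)$ with $\tL(F_{n_0})$ layer by layer and show that the tower is rigid above $F_{n_0}$. First I would pin down the $2$-class groups of the layers. For $n\ge n_0$ the $\Z_2$-extension $F_\infty/F_n$ is totally ramified at the prime(s) above $2$ and unramified elsewhere, so every step $F_{n+1}/F_n$ is a ramified quadratic extension; hence $L(F_n)\cap F_{n+1}=F_n$, the compositum $L(F_n)F_{n+1}/F_{n+1}$ is an unramified abelian $2$-extension of degree $\#A(F_n)$ lying inside $L(F_{n+1})$, and functoriality of the Artin map identifies the restriction $A(F_{n+1})\to{\rm{Gal}}(L(F_n)F_{n+1}/F_{n+1})\cong A(F_n)$ with the norm $N_{n+1,n}$. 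So $N_{n+1,n}$ is surjective for $n\ge n_0$; therefore $X(F_\infty)\cong(2,2)$ surjects onto $A(F_n)$ while $\#A(F_n)$ is nondecreasing in $n$ and at least $\#A(F_{n_0})=4$. Consequently $A(F_n)\cong(2,2)$ for all $n\ge n_0$.

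Next I would produce surjections on the full Galois groups $\mathcal{G}_n:={\rm{Gal}}(\tL(F_n)/F_n)$. Since $\tL(F_n)/F_n$ is unramified while $F_{n+1}/F_n$ is not, $\tL(F_n)\cap F_{n+1}=F_n$; and the compositum $\tL(F_n)F_{n+1}/F_{n+1}$ is again unramified --- unramifiedness is a local condition and survives the base change $F_{n+1}/F_n$, including at the primes above $2$ where $F_{n+1}/F_n$ ramifies --- and pro-$2$, hence contained in $\tL(F_{n+1})$. Restriction therefore gives a surjection $\mathcal{G}_{n+1}\twoheadrightarrow{\rm{Gal}}(\tL(F_n)F_{n+1}/F_{n+1})\cong\mathcal{G}_n$ for each $n\ge n_0$.

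Now I would prove $\mathcal{G}_n\cong\mathcal{G}_{n_0}$ by induction on $n\ge n_0$, the base case being the hypothesis (so $\mathcal{G}_{n_0}\cong Q_{2^m}$ or $S_{2^m}$, non-abelian of order $2^m\ge 8$). For the inductive step, note first that $\mathcal{G}_{n+1}$ is a \emph{finite} $2$-group: its abelianization is $A(F_{n+1})\cong(2,2)$, so by the Furtw\"angler--Taussky theorem (cf. \cite{furtwangler}, \cite{taussky cft}) the $2$-class field tower of $F_{n+1}$ terminates and $\tL(F_{n+1})/F_{n+1}$ is finite. Since $\mathcal{G}_{n+1}$ surjects onto the non-abelian group $\mathcal{G}_n$ of order $\ge 8$ and $\mathcal{G}_{n+1}/\mathcal{G}_{n+1}^{\prime}\cong(2,2)$, Theorem~\ref{gorenstein result} forces $\mathcal{G}_{n+1}\in\{D_{2^k},Q_{2^k},S_{2^k}\}$ for some $k\ge 3$. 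Finally, a dihedral group has only dihedral or abelian quotients, while a generalized quaternion or semidihedral group of order $2^k$ has a unique minimal normal subgroup with dihedral quotient, so all its proper quotients are dihedral or abelian; hence the only member of $\{D_{2^k},Q_{2^k},S_{2^k}:k\ge 3\}$ that surjects onto $Q_{2^m}$ (resp. $S_{2^m}$) is $Q_{2^m}$ (resp. $S_{2^m}$) itself, giving $\mathcal{G}_{n+1}\cong\mathcal{G}_n\cong\mathcal{G}_{n_0}$. To finish, ${\rm{Gal}}(\tL(F_\infty)/F_\infty)=\varprojlim_n\mathcal{G}_n$, and for $n\ge n_0$ the transition maps are the surjections constructed above between finite groups of equal order, hence isomorphisms, so the limit is $\cong\mathcal{G}_{n_0}$, which is the assertion.

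The part I expect to demand the most care is the inductive step: one has to secure the finiteness of $\mathcal{G}_{n+1}$ \emph{before} invoking Gorenstein's classification --- this is precisely where termination of the $2$-class field tower for fields with $(2,2)$ class group is indispensable --- together with the elementary but error-prone quotient bookkeeping for $D_{2^k}$, $Q_{2^k}$, $S_{2^k}$. An alternative to that last point is to run Kisilevsky's Theorem~\ref{Kisilevsky 1} on the three quadratic subextensions of $\tL(F_n)/F_n$ (using $A(F_n)\cong(2,2)$) and track Taussky's conditions (A)/(B), but the quotient argument above is shorter.
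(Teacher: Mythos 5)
This statement is imported verbatim from Mouhib--Movahhedi \cite{mouhib-mova2}; the paper you were given contains no proof of it, so there is nothing internal to compare against, and your argument has to be judged on its own. On that basis it is correct, and it is essentially the known argument organized as a layer-by-layer induction. All the load-bearing steps check out: surjectivity of the norms $N_{n+1,n}$ for $n\ge n_0$ via the compositum $L(F_n)F_{n+1}$ and Artin functoriality, which together with $\#X(F_\infty)=4$ pins $A(F_n)\cong(2,2)$; preservation of unramifiedness under base change, giving surjections ${\rm{Gal}}(\tL(F_{n+1})/F_{n+1})\twoheadrightarrow{\rm{Gal}}(\tL(F_n)/F_n)$; finiteness of these groups from Furtw\"angler--Taussky \emph{before} invoking Theorem \ref{gorenstein result} (you are right to flag this as indispensable --- pure pro-$2$ group theory cannot give it, since e.g.\ $\Z_2\rtimes C_2$ has abelianization $(2,2)$); and the quotient rigidity of $Q_{2^k}$ and $S_{2^k}$, which holds because every nontrivial normal subgroup of a $2$-group meets the center, the center here has order $2$, and the central quotient is dihedral, so no group on Gorenstein's list properly surjects onto a quaternion or semidihedral group. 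The only point worth making explicit is the final identification ${\rm{Gal}}(\tL(F_\infty)/F_\infty)=\varprojlim_n{\rm{Gal}}(\tL(F_n)/F_n)$, i.e.\ that every finite unramified $2$-extension of $F_\infty$ descends to some layer; the paper asserts this in its introduction and you use it silently, so cite it or prove it. Compared with the source, where the rigidity argument is run directly against the pro-$2$ group ${\rm{Gal}}(\tL(F_\infty)/F_\infty)$ (which surjects onto ${\rm{Gal}}(\tL(F_{n_0})/F_{n_0})$ by the same compositum argument), your induction through the layers is a minor variant: it trades a one-shot argument at infinite level for repeated use of Furtw\"angler--Taussky at each layer, but avoids having to classify infinite pro-$2$ groups with abelianization $(2,2)$. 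Your suggested alternative via Theorem \ref{Kisilevsky 1} is unnecessary; the quotient bookkeeping you give is the cleaner route.
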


\noindent Later, Mizusawa explicitly studied the possibility of ${\rm{Gal}}(\tL(F_{\infty})/F_{\infty})$ and the semidihedral group being isomorphic, and proved the following result.

\begin{theorem}\cite[Theorem 1]{mizusawa3}\label{never sd}
    Let $F$ be a real quadratic field. Then, the Galois group of the maximal unramified pro-2-extension of $F_{\infty}$ which is ${\rm{Gal}}(\tL(F_{\infty})/F_{\infty})$ is not the semidihedral group.  
\end{theorem}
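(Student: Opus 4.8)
\emph{Proof proposal.} The plan is to argue by contradiction, pass from the tower to a single layer, extract arithmetic data from Kisilevsky's classification, and run into a sign obstruction forced by $F$ being totally real. Assume $G := {\rm{Gal}}(\tL(F_{\infty})/F_{\infty}) \cong S_{2^m}$ for some $m \geq 4$. First I would observe that $X(F_{\infty}) = G/G' \cong (2,2)$ (since $L(F_{\infty})$ is the maximal abelian subextension of $\tL(F_{\infty})/F_{\infty}$), so $X(F_{\infty})$ is finite and $\mu(F_{\infty}/F) = \lambda(F_{\infty}/F) = 0$. Pick $n_0$ so that the prime(s) of $F_{\infty}$ above $2$ are totally ramified in $F_{\infty}/F_{n_0}$; using Theorem \ref{fukuda's result}, the finiteness of $X(F_{\infty})$, and the fact that the norm maps $A(F_{n+1}) \to A(F_n)$ are surjective under total ramification, enlarge $n_0$ so that $A(F_n) \cong X(F_{\infty}) \cong (2,2)$ for all $n \geq n_0$. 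Total ramification at $2$ also gives $\tL(F_n) \cap F_{\infty} = F_n$ and $\tL(F_n)F_{\infty} \subseteq \tL(F_{\infty})$ for $n \geq n_0$, hence compatible surjections $G \twoheadrightarrow {\rm{Gal}}(\tL(F_{n+1})/F_{n+1}) \twoheadrightarrow {\rm{Gal}}(\tL(F_n)/F_n)$ realising $G$ as $\displaystyle\lim_{\substack{\longleftarrow \\ n}}{\rm{Gal}}(\tL(F_n)/F_n)$; as $G$ is finite, this system stabilises, so after one further enlargement of $n_0$ I may assume ${\rm{Gal}}(\tL(F_{n_0})/F_{n_0}) \cong S_{2^m}$.

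Next I would set $\mK := F_{n_0}$, a \emph{totally real} field with $A(\mK) \cong (2,2)$; by Furtw\"{a}ngler--Taussky its $2$-class field tower terminates and $\tL(\mK) = \mK^{(2)}$. Since ${\rm{Gal}}(\mK^{(2)}/\mK) \cong S_{2^m}$, Theorem \ref{Kisilevsky 1} places us in case (3)(a): with $L$ the unramified extension of $\mK$ having ${\rm{Gal}}(L/\mK) \cong D_8$, all three quadratic subextensions $\mK_1,\mK_2,\mK_3$ of $\mK^{(1)}/\mK$ satisfy Taussky's Condition (B) with $\#\ker(j_i) = 2$. Now I would feed this into the genus formula. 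Each $\mK_i/\mK$ is unramified at all finite \emph{and infinite} places, so $\mK_i$ is totally real, $\mK_i/\mK$ has $t=0$ ramified places, every unit of $\mK$ is a local norm everywhere, and therefore $E(\mK) \subseteq N_{\mK_i/\mK}(\mK_i^{\times})$ by the Hasse norm theorem. Writing $G_i = {\rm{Gal}}(\mK_i/\mK)$, Theorem \ref{genusfor} yields $\#A(\mK_i)^{G_i} = \tfrac12\#A(\mK) = 2$ and $\#B(\mK_i)^{G_i} = 2/[E(\mK):N_{\mK_i/\mK}(E(\mK_i))]$; since no prime ramifies, every $G_i$-invariant ideal of $\mK_i$ is lifted from $\mK$, so $B(\mK_i)^{G_i} = j_i(A(\mK))$, which has order $\#A(\mK)/\#\ker(j_i) = 2$. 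Comparing gives $[E(\mK):N_{\mK_i/\mK}(E(\mK_i))] = 1$ for each $i$; moreover $\ker(j_i)$ and $N_{\mK_i/\mK}(A(\mK_i))$ are each subgroups of order $2$ of $A(\mK) \cong (2,2)$, and Condition (B) says exactly that, for each $i$, these two subgroups are distinct.

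The remaining step, which I expect to be the main obstacle, is to contradict this configuration using that $\mK$ is totally real. The idea is that over a totally real base the archimedean places impose a sign condition incompatible with case (3)(a): the unramified $D_8$-extension $L/\mK$ amounts, after adjoining $\zeta_4$, to a quaternionic ($Q_8$) embedding problem whose obstruction is a product of local Hilbert symbols, and the contributions at the real places of $\mK$ — all genuinely present because $\mK$ is totally real — should force Condition (A), not (B), for at least one $\mK_i$, contradicting the conclusion just reached. A more structural variant would pass to the quadratic subextension $M$ of $F_{\infty}$ fixed by the unique index-$2$ quaternion subgroup $Q_{2^{m-1}} \leq S_{2^m}$; then $M$ is totally real, $\tL(M) = \tL(F_{\infty})$, and ${\rm{Gal}}(\tL(M)/M) \cong Q_{2^{m-1}}$, so realising $M$ as the cyclotomic $\Z_2$-extension of a totally real field $M_0$ and reapplying the first paragraph together with Theorem \ref{mouhib-mova 2008} reduces the statement to: no generalised quaternion group can occur as ${\rm{Gal}}(\tL(E_{\infty})/E_{\infty})$ for a totally real field $E$ once it has order $\geq 8$ — which in turn rests on the sign obstruction attached to the unique involution of such a group, which cannot be realised through the trivial archimedean local extensions of a totally real field. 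Making this archimedean obstruction precise is the crux of the proof; everything before it is formal.
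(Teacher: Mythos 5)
First, a point of comparison that matters for this exercise: the paper does not prove Theorem \ref{never sd} at all — it is quoted from Mizusawa \cite{mizusawa3} and used as a black box (in the proof of Lemma \ref{tau as 1}). So the only proof to measure your proposal against is Mizusawa's, and his argument is genuinely Iwasawa-theoretic: it exploits the fact that $F_{\infty}$ is the cyclotomic $\Z_2$-extension of a real quadratic field (the classification, due to Mouhib--Movahhedi, of the real quadratic $F$ with $X(F_{\infty})\cong(2,2)$ into explicit families, and then arithmetic of those families along the tower — behaviour of the prime $2$, norms of units such as $1+\sqrt{2}$, residue-symbol conditions), rather than any property shared by all totally real fields.

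Your reductions up to the last step are fine: passing from $F_{\infty}$ to a layer $\mK=F_{n_0}$ with $A(\mK)\cong(2,2)$ and ${\rm{Gal}}(\tL(\mK)/\mK)\cong S_{2^m}$ is standard (essentially the converse direction of Theorem \ref{mouhib-mova 2008}), and the genus-theoretic bookkeeping ($t=0$, $\#A(\mK_i)^{G_i}=2$, $[E(\mK):N_{\mK_i/\mK}(E(\mK_i))]=1$, $\#\ker j_i=2$, Condition (B) for all three $\mK_i$ via Theorem \ref{Kisilevsky 1}(3)(a)) is correct but, as you note, yields no contradiction. The genuine gap is the final step, which is the entire content of the theorem, and the mechanism you propose for it cannot exist. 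In an everywhere-unramified $2$-tower over a totally real base every real place splits at every stage, so all archimedean decomposition groups are trivial and the local component at a real place of any embedding-problem obstruction vanishes identically; your own computation $[E(\mK):E(\mK)\cap N_{\mK_i/\mK}(\mK_i^{\times})]=1$ is precisely the statement that the archimedean data is invisible here, so there is no ``sign obstruction'' left to extract, and nothing at the real places distinguishes a semidihedral unramified tower from a dihedral or quaternion one. Consequently the statements your sketch actually targets are strictly stronger than Mizusawa's theorem and are untenable: quaternion (and dihedral) unramified $2$-towers certainly occur over totally real fields with $2$-class group $(2,2)$ — this is exactly why Theorem \ref{mouhib-mova 2008} must allow both the quaternion and semidihedral cases at the layer $F_{n_0}$, and the quaternion possibility at the infinite level is left open in the present paper itself — so your ``structural variant'', which reduces the theorem to ruling out generalized quaternion groups ${\rm{Gal}}(\tL(E_{\infty})/E_{\infty})$ over arbitrary totally real $E$, proves too much and is false. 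Any correct proof has to use the specific $\Z_2$-extension setting (the $\Gamma$-action and the arithmetic of the few families of real quadratic fields with $X(F_{\infty})\cong(2,2)$); total reality alone does not suffice.
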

In addition to the groups mentioned in Theorem \ref{Kisilevsky 1}, we also require some properties of groups of order 16. It is known that there are 14 distinct groups of order 16 up to isomorphism (cf. \cite{conrad16}). Of these, the abelian groups are of the form $(16), (8,2), (4,4), (4,2,2)$, and $(2,2,2,2)$. Using generators, relations, and the order of the group elements (cf. \cite{conrad16}, \cite{clausen16}), we draw some conclusions on some of the nonabelian groups of order 16 which will be essential at a later stage. 

\begin{rmk}\label{groups 16}
    \begin{enumerate}
        \item The groups $D_{16}, Q_{16}$, and $S_{16}$ do not have a subgroup of type $(2,4)$.
        \item The group $(4) \rtimes (4)$ (the semidirect product of $(4)$ with itself) has only one subgroup of type $(2,2)$.
        \item The group $M_{16}$ (modular group of order 16) has only 3 subgroups of order 8. These are $(8)$ (occurring twice) and $(2,4)$. Hence, this group can have only 3 quotients of type $(2)$.
        \item The group $(2,2)\rtimes(4)$ has no normal cyclic subgroup of order 4. 
    \end{enumerate}
\end{rmk}

An ideal $\mb$ contained in $\mathcal{O}_K$ is said to capitulate in an extension $F/K$ if $\mb\mathcal{O}_F$ is a principal ideal in $\mathcal{O}_F$. This phenomenon is called capitulation, and the ideal class $[\mb]$ then belongs to the kernel of the lifting map from $K$ to $F$. In general, obtaining the ideal classes that capitulate in an extension is challenging, one of the reasons being the presence of the mysterious units in the ring of integers. In case of a cyclic unramified extension, Hilbert's Theorem 94 (cf. \cite[Theorem 1.8.1]{lemmermeyer}) asserts capitulation of a nontrivial ideal class, though it does not provide the exact number of classes that capitulate. The next result by Gras is vital for understanding capitulation in ramified extensions.

\begin{theorem}\cite[Theorem 1.1, part 2]{gras3}\label{GrasCap}
 Let $F/K$ be a cyclic, totally ramified $\ell$-extension of degree ${\ell}^N$, $N \geq 1$. Let $G = {\rm{Gal}}(F/K) = \langle \psi \rangle$, $\ell^{(e(F))}$ be the exponent of $A(F)$, and let $m(F)$ be the minimal integer such that $(\psi - 1)^{m(F)}$ annihilates $A(F)$. Let $[y] \in A(F)$ be of order ${\ell}^e$, annihilated by $(\psi - 1)^{m}$, where, for $s \in [0, N-1]$, if $m \in [\ell^s, \ell^{s+1} -1]$, then $e \in [1, N-s]$. In that case, $[x] = N_{F/K}([y]) \in A(K)$ capitulates in $A(F)$.
\end{theorem}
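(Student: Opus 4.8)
The plan is to recast the capitulation statement as an identity in the integral group ring $\Z[G]$ acting on the finite abelian $\ell$-group $A(F)$, viewed as a $\Z[G]$-module written additively, and then to extract the needed divisibility from $\ell$-adic valuations of binomial coefficients. First I would observe that $[x] = N_{F/K}([y])$ capitulates in $F$ exactly when its image under the lifting map $j \colon A(K) \to A(F)$ vanishes, and that for a Galois extension the composite $j \circ N_{F/K}$ on $A(F)$ is multiplication by the norm element $\nu := \sum_{\sigma \in G}\sigma = \sum_{k=0}^{\ell^{N}-1}\psi^{k} \in \Z[G]$; this follows from the identity $(N_{F/K}\mathfrak{a})\mathcal{O}_{F} = \prod_{\sigma \in G}\sigma(\mathfrak{a})$ for ideals $\mathfrak{a}$ of $\mathcal{O}_{F}$, checked on primes via $\mathfrak{p}\mathcal{O}_{F} = (\mathfrak{P}_{1}\cdots\mathfrak{P}_{g})^{e}$ and $N_{F/K}\mathfrak{P}_{i} = \mathfrak{p}^{f}$. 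So it suffices to show $\nu \cdot [y] = 0$ in $A(F)$.

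Next I would expand $\nu$ against the filtration of $A(F)$ by powers of $T := \psi - 1$. By hypothesis $T^{m}$ annihilates $[y]$; writing $\psi^{k} = (1+T)^{k}$, expanding, and regrouping with the hockey-stick identity $\sum_{k=i}^{n-1}\binom{k}{i} = \binom{n}{i+1}$ gives
\[
\nu \cdot [y] \;=\; \sum_{i=0}^{m-1}\binom{\ell^{N}}{i+1}\,(\psi-1)^{i}[y].
\]
The rest is arithmetic: for $1 \le j \le \ell^{N}$ one has $v_{\ell}\binom{\ell^{N}}{j} = N - v_{\ell}(j)$ (for instance, write $\binom{\ell^{N}}{j} = \tfrac{\ell^{N}}{j}\binom{\ell^{N}-1}{j-1}$ and note $\binom{\ell^{N}-1}{j-1}$ is prime to $\ell$, since every base-$\ell$ digit of $\ell^{N}-1$ is $\ell-1$). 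If $m \in [\ell^{s}, \ell^{s+1}-1]$, then every $i$ with $0 \le i \le m-1$ has $1 \le i+1 < \ell^{s+1}$, hence $v_{\ell}(i+1) \le s$ and $v_{\ell}\binom{\ell^{N}}{i+1} \ge N-s \ge e$, the last inequality being exactly the hypothesis $e \le N-s$. Since $[y]$ has order $\ell^{e}$, the $\Z[G]$-submodule generated by $[y]$ has exponent dividing $\ell^{e}$, so each summand above vanishes; hence $\nu \cdot [y] = 0$ and, by the first step, $[x]$ capitulates in $A(F)$.

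I do not expect a real obstruction: this is a short computation once $\nu$ is put in normal form modulo the annihilator of $[y]$. The points needing care are the identity $j \circ N_{F/K} = (\text{action of }\nu)$ on class groups and, more substantively, the exact matching between the interval $[\ell^{s}, \ell^{s+1}-1]$ containing $m$ and the largest possible value of $v_{\ell}\binom{\ell^{N}}{i+1}$ over $i+1 \le m$ — namely that the extremal case is $i+1 = \ell^{s}$ — which is precisely why $e \le N-s$ is the sharp hypothesis. Cyclicity of $G$ is used throughout; total ramification is part of Gras' standing framework but is not needed for this particular implication.
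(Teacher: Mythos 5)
This statement is quoted in the paper from Gras (Theorem 1.1, part 2 of the cited reference) and the paper gives no proof of it, so there is nothing internal to compare against; judging your argument on its own, it is correct and complete, and it is in substance the same argument as in Gras's source: identify $j\circ N_{F/K}$ with multiplication by the algebraic norm $\nu=\sum_{k=0}^{\ell^N-1}\psi^k$, expand $\nu=\sum_{i}\binom{\ell^N}{i+1}(\psi-1)^i$, truncate using $(\psi-1)^m[y]=0$, and kill each surviving term via $v_\ell\binom{\ell^N}{i+1}=N-v_\ell(i+1)\ge N-s\ge e$. All the steps check out: the ideal identity $N_{F/K}(\mathfrak{a})\mathcal{O}_F=\prod_{\sigma\in G}\sigma(\mathfrak{a})$ (which the paper itself invokes, in the form $(j\circ N)[y]=[y]^{1+\tau}$, in its proof of Theorem 1.4), the hockey-stick regrouping, the valuation formula via $\binom{\ell^N}{j}=\tfrac{\ell^N}{j}\binom{\ell^N-1}{j-1}$ with $\binom{\ell^N-1}{j-1}$ prime to $\ell$, and the observation that the cyclic module generated by $[y]$ has exponent dividing $\ell^e$. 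Your closing remark is also accurate: cyclicity of $G$ is essential to write $\nu$ as a polynomial in $\psi-1$, while total ramification plays no role in this implication (it matters elsewhere in Gras's framework, e.g.\ for surjectivity of the norm on class groups, which is not needed here since $[x]$ is defined as a norm).
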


Finally, we state Hecke's quadratic reciprocity law, which is similar to the usual quadratic reciprocity law, but can be considered in number fields with some additional assumptions. This will enable us to study the decomposition of certain prime ideals in fields of higher degree over $\Q$.

\begin{theorem}\cite[Corollary 10.13]{roy-cft}\label{Reciprocity}
 Let $n=2$, $F$ be a number field, and $\left( \frac{x}{y} \right)$ denote the Legendre symbol for $x, y \in F^{\times}$. Let $\sigma_1, \ldots, \sigma_R$ be all the real embeddings of $F$. Fix $x,y \in F^{\times}$ with $x_i = \sigma_i(x)$ and $y_i = \sigma_i(y)$ for $i=1, \ldots, R$. If $x$ and $y$ are relatively prime and not divisible by prime(s) above $2\Z$, and either of them is congruent to a square modulo 4, then, $$\left( \dfrac{x}{y} \right) = \left(\displaystyle{\prod_{i=1}^{R}(-1)^{s(x_i,y_i)} }\right)\cdot \left( \dfrac{y}{x} \right), \text{ where,} \ s(x_i, y_i) = \begin{cases}
     1, \ \text{if } x_i <0 \text{ and } y_i < 0;\\
     0, \ \text{otherwise.}
 \end{cases}.$$
\end{theorem}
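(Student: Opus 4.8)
The plan is to deduce this quadratic reciprocity law from the Hilbert symbol product formula (Hilbert reciprocity), which I take as the fundamental input from class field theory: for any $x,y \in F^{\times}$ one has $\prod_{\mathfrak{p}}(x,y)_{\mathfrak{p}}=1$, the product running over all places $\mathfrak{p}$ of $F$ (finite and archimedean), where $(x,y)_{\mathfrak{p}}$ denotes the quadratic Hilbert symbol in the completion $F_{\mathfrak{p}}$. The first step is to record that the global Legendre symbol factors through local contributions: writing $\left(\frac{x}{y}\right)=\prod_{\mathfrak{p}\mid y}\left(\frac{x}{\mathfrak{p}}\right)^{v_{\mathfrak{p}}(y)}$ and using the tame-symbol identity $(x,y)_{\mathfrak{p}}=\left(\frac{x}{\mathfrak{p}}\right)^{v_{\mathfrak{p}}(y)}$, valid at a finite $\mathfrak{p}\nmid 2$ at which $x$ is a unit, I can identify $\prod_{\mathfrak{p}\mid y}(x,y)_{\mathfrak{p}}=\left(\frac{x}{y}\right)$ and symmetrically $\prod_{\mathfrak{p}\mid x}(x,y)_{\mathfrak{p}}=\left(\frac{y}{x}\right)$. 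Here the coprimality of $x$ and $y$ guarantees that these two sets of primes are disjoint and that no mixed ramification terms appear.

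Next I would split the product formula according to the support of $x$, $y$, $2$, and $\infty$. Every finite prime $\mathfrak{p}$ with $\mathfrak{p}\nmid 2xy$ is \emph{tame} and $x,y$ are units there, so $(x,y)_{\mathfrak{p}}=1$ and contributes nothing. What survives is
\[
1=\left(\frac{x}{y}\right)\left(\frac{y}{x}\right)\cdot\prod_{\mathfrak{p}\mid 2}(x,y)_{\mathfrak{p}}\cdot\prod_{i=1}^{R}(x,y)_{\sigma_i},
\]
the complex archimedean places contributing trivially since $\mathbb{C}^{\times}$ carries no quadratic obstruction. Because each symbol is $\pm1$ and hence its own inverse, rearranging yields $\left(\frac{x}{y}\right)=\left(\frac{y}{x}\right)\cdot\prod_{\mathfrak{p}\mid 2}(x,y)_{\mathfrak{p}}\cdot\prod_{i=1}^{R}(x,y)_{\sigma_i}$.

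It then remains to evaluate the two correction factors. The archimedean factor is elementary: at a real embedding $\sigma_i$ the symbol $(x,y)_{\sigma_i}$ equals $-1$ precisely when both $x_i=\sigma_i(x)$ and $y_i=\sigma_i(y)$ are negative, that is, it equals $(-1)^{s(x_i,y_i)}$ with the stated $s$, so $\prod_{i}(x,y)_{\sigma_i}=\prod_{i=1}^{R}(-1)^{s(x_i,y_i)}$. The dyadic factor is where the hypothesis enters and is the main obstacle. I must show that if, say, $x$ is congruent to a square modulo $4$ while $x,y$ are units at every $\mathfrak{p}\mid 2$, then $\prod_{\mathfrak{p}\mid 2}(x,y)_{\mathfrak{p}}=1$. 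The core local statement is that for a dyadic prime $\mathfrak{p}$, a $\mathfrak{p}$-adic unit that is $\equiv\square\pmod{4\mathcal{O}_{\mathfrak{p}}}$ lies in the kernel of $u\mapsto(u,\,\cdot\,)_{\mathfrak{p}}$ restricted to the units; this is a computation with the explicit formula for the Hilbert symbol over a dyadic local field, generalizing the classical $(u,v)_2=(-1)^{\epsilon(u)\epsilon(v)}$ for odd $2$-adic units, where $\epsilon(u)\equiv\frac{u-1}{2}$. Pinning down exactly how the ``congruent to a square mod $4$'' condition trivializes the unit–unit dyadic symbol across all $\mathfrak{p}\mid 2$ simultaneously, with correct treatment of the ramification and residue degree of $2$ in $F$, is the delicate part.

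Finally, combining the two evaluated corrections with the rearranged product formula gives exactly $\left(\frac{x}{y}\right)=\bigl(\prod_{i=1}^{R}(-1)^{s(x_i,y_i)}\bigr)\left(\frac{y}{x}\right)$, as claimed. Throughout I am assuming Hilbert reciprocity and the standard local Hilbert-symbol formulas as black boxes; the genuinely new work is the bookkeeping in the splitting step and the dyadic supplement, and I expect the latter — verifying that the mod-$4$ square condition annihilates the product over the primes above $2$ — to require the most care.
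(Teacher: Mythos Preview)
The paper does not supply its own proof of this statement: Theorem~\ref{Reciprocity} is quoted verbatim from an external source (\cite[Corollary~10.13]{roy-cft}) as a preliminary tool, with no argument given. There is therefore nothing in the paper to compare your proposal against.

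That said, your approach is the standard one for deriving Hecke's quadratic reciprocity and is essentially correct. Deducing the formula from Hilbert's product formula $\prod_{\mathfrak{p}}(x,y)_{\mathfrak{p}}=1$, splitting the product over the supports of $x$, $y$, $2$, and $\infty$, and identifying $\prod_{\mathfrak{p}\mid y}(x,y)_{\mathfrak{p}}=\left(\frac{x}{y}\right)$ via the tame formula is exactly how this is done in the cited reference and in most treatments. Your identification of the real archimedean contribution with $(-1)^{s(x_i,y_i)}$ is correct, and you have rightly isolated the dyadic product $\prod_{\mathfrak{p}\mid 2}(x,y)_{\mathfrak{p}}$ as the place where the hypothesis ``congruent to a square modulo $4$'' enters. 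The local fact you need there---that a dyadic unit congruent to a square modulo $4\mathcal{O}_{\mathfrak{p}}$ pairs trivially with every other unit under the quadratic Hilbert symbol---is genuine and follows from the explicit description of $U_{\mathfrak{p}}/U_{\mathfrak{p}}^2$ in terms of the filtration by higher unit groups; you are right that this is the step requiring the most care, but it is a standard local computation rather than a gap.
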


\section{The rank of $A(K_n)$}

\noindent This section onwards, we fix $K = \Q(\sqrt{p}, \sqrt{r})$ and $k = \Q(\sqrt{pr})$ where $p$ and $r$ follow Condition (\ref{Cond}). For $p \equiv 1 \Mod 4$, the genus field of $\Q(\sqrt{p})$ is itself. Since ${\rm{rank}}_2A(\Q(\sqrt{p})) = {\rm{rank}}_2{\rm{Gal}}(\Q(\sqrt{p})_G/\Q(\sqrt{p})) = 0$,  $\Q(\sqrt{p})$ does not have a nontrivial unramified 2-extension, and $\#A(\Q_0(\sqrt{p})) = \#A(\Q(\sqrt{p})) = 1$. When $p \equiv 1 \Mod 8$ with $\left( \dfrac{2}{p} \right)_4 = -1$, then it has been shown in \cite[Theorem 4.1]{mouhib-mova} that $\#A(\Q_1(\sqrt{p})) = 2$. We now delve into $A\left(\Q_2(\sqrt{p})\right)$ with some more conditions on $p$.

\begin{propn}\label{A(Q_n(p)} 
Let $p \equiv 9 \Mod {16}$ with $\left( \dfrac{2}{p} \right)_4 = -1$. Then, there exists $p_1 \in \Q_1 = \Q(\sqrt{2})$ such that $L(\Q_1(\sqrt{p})) = \Q_1(\sqrt{p}, \sqrt{p_1})$, and $\#A\left(\Q_2(\sqrt{p})\right) \leq 4$.
\end{propn}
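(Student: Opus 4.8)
The statement has two essentially independent parts, and I would handle them separately.

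\emph{The structure of $L(\Q_1(\sqrt p))$.} Since $p \equiv 1 \Mod 8$, the prime $p$ splits in $\Q_1 = \Q(\sqrt 2)$; fix a prime divisor and write $p = p_1 p_2$ with $p_1, p_2$ prime elements of $\Z[\sqrt 2]$ normalised so that $p_1 p_2 = p$, as in Theorem \ref{f-k}. The plan is to show that, after possibly replacing $p_1$ by $-p_1$, the field $\Q_1(\sqrt p,\sqrt{p_1}) = \Q(\sqrt 2, \sqrt{p_1}, \sqrt{p_2})$ is an everywhere unramified quadratic extension of $\Q_1(\sqrt p)$; since $\#A(\Q_1(\sqrt p)) = 2$ by \cite[Theorem 4.1]{mouhib-mova}, there is a unique such extension, and it must be $L(\Q_1(\sqrt p))$. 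Unramifiedness away from $2$ is routine: at the (unique, ramified) prime of $\Q_1(\sqrt p)$ above $p_i$ one has $\Q_1(\sqrt p,\sqrt{p_1}) = \Q_1(\sqrt p)(\sqrt u)$ for a unit $u$ over an odd prime, hence unramified; at a prime above $r$, and at any odd prime not dividing $p$, one uses that the discriminant of $\Q(\sqrt 2, \sqrt{p_1}, \sqrt{p_2})/\Q$ is divisible only by $2$ and $p$; and there is no ramification at infinity since everything is totally real. The only substantive point is ramification above $2$, which comes down to whether $\Q_1(\sqrt{p_1})/\Q_1$ is unramified at the prime $(\sqrt 2)$; I would settle this by a local computation in $\Q_2(\sqrt 2)$ — describing the subgroup of squares of units modulo $(\sqrt 2)^{5}$ and the unramified quadratic extension, which is $\Q_2(\sqrt 2, \sqrt{-3})$ — and checking, using $p = a^2 - 2b^2 \equiv 9 \Mod{16}$, that exactly one of $p_1, -p_1$ lies in the ``unramified'' class; that one is taken as the $p_1$ of the statement.

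\emph{The bound $\#A(\Q_2(\sqrt p)) \le 4$.} This is the main content, and the argument is group-theoretic. First I record that $A(\Q_2(\sqrt p))$ is cyclic (from Theorem \ref{mouhib-mova}; alternatively, from Chevalley's formula for $\Q_2(\sqrt p)/\Q_2$ together with $A(\Q_2) = 1$ and Proposition \ref{rmk to genus}, which give $\mathrm{rank}_2 A(\Q_2(\sqrt p)) \le 1$), so write $A(\Q_2(\sqrt p)) \cong \Z/2^m\Z$. Because the primes above $2$ are totally ramified in $\Q_2(\sqrt p)/\Q_1(\sqrt p)$, the norm $N_{2,1} : A(\Q_2(\sqrt p)) \to A(\Q_1(\sqrt p)) \cong \Z/2\Z$ is surjective, so $m \ge 1$. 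The key extra ingredient is that $\mathrm{Gal}(\Q_2(\sqrt p)/\Q(\sqrt p)) \cong \Z/4\Z$ acts on $A(\Q_2(\sqrt p))$: if a generator $\sigma_4$ acts by $u_4 \in (\Z/2^m\Z)^\times$, then the generator $\sigma := \sigma_4^2$ of $\mathrm{Gal}(\Q_2(\sqrt p)/\Q_1(\sqrt p))$ acts by $u := u_4^2$, so $u$ is forced to be a \emph{square} in $(\Z/2^m\Z)^\times$.

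I would then split on the lifting map $j : A(\Q_1(\sqrt p)) \to A(\Q_2(\sqrt p))$, which, as $A(\Q_1(\sqrt p)) \cong \Z/2\Z$, is either zero or injective. If $j = 0$, Proposition \ref{rmk to genus} forces $\sigma$ to act as $-1$, i.e. $u = -1$; but $-1$ is a square in $(\Z/2^m\Z)^\times$ only for $m = 1$, so $\#A(\Q_2(\sqrt p)) = 2$. If $j$ is injective, then the relation $j \circ N_{2,1} = 1 + \sigma$ on $A(\Q_2(\sqrt p))$, together with surjectivity of $N_{2,1}$, gives $\mathrm{im}(1+\sigma) = \mathrm{im}(j)$, a group of order $2$; computing $\mathrm{im}(1+\sigma)$ inside $\Z/2^m\Z$ pins down $u = 2^{m-1} - 1$ and forces $m \ge 2$. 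Since $2^{m-1}-1$ is a square in $(\Z/2^m\Z)^\times$ only for $m = 2$, this yields $\#A(\Q_2(\sqrt p)) = 4$. In either case $\#A(\Q_2(\sqrt p)) \le 4$ (indeed $\#A(\Q_2(\sqrt p)) \in \{2,4\}$).

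The obstacle to anticipate is that the genus (Chevalley) formulas only control the $G$-fixed subgroup $A(\Q_2(\sqrt p))^G$ and hence do not by themselves bound the order of the cyclic group $A(\Q_2(\sqrt p))$; the missing leverage is the rigidity coming from the fact that the degree-$2$ action must be a square in the automorphism group, which, combined with Proposition \ref{rmk to genus} and the norm/lift relation, is exactly enough to force $m \le 2$. The secondary, purely computational, difficulty is the local analysis at $(\sqrt 2)$ in the first part, where the hypothesis $p \equiv 9 \Mod{16}$ — rather than merely $p \equiv 1 \Mod 8$ — is precisely what is used.
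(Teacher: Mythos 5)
Your proposal is correct in substance but follows a genuinely different route from the paper, most notably for the bound $\#A(\Q_2(\sqrt{p}))\leq 4$. The paper gets this bound by quoting two external results: \cite[Proposition 3.6]{mouhib-mova}, which converts the cyclicity of $X(\Q(\sqrt{p})_\infty)$ (Theorem \ref{mouhib-mova}) into $A(M)\cong (2,2)$ for the auxiliary field $M=\Q\bigl(\sqrt{p(2+\sqrt{2})}\bigr)$, and then \cite[Proposition 3.5]{L-S_arx}, which gives $\#A(\Q_2(\sqrt{p}))\leq \tfrac12\,\#A(\Q_1(\sqrt{p}))\cdot\#A(M)=4$. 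Your argument is self-contained: cyclicity of $A(\Q_2(\sqrt{p}))$ (either from the standard fact that each $A_n$ is a quotient of $X$ when the primes above $2$ are totally ramified from level $0$ -- which holds here since $p\equiv 1 \Mod 8$ -- or from Chevalley plus Proposition \ref{rmk to genus}, where you should note that $t=2$ because $p\equiv 9\Mod{16}$ gives exactly two primes above $p$ in $\Q_2$), then the observation that the action of ${\rm{Gal}}(\Q_2(\sqrt{p})/\Q_1(\sqrt{p}))$ is the square of an automorphism of the cyclic group, combined with $j\circ N=1+\sigma$, Proposition \ref{rmk to genus}, and surjectivity of the norm. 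I checked the unit computations ($-1$ and $2^{m-1}-1$ are squares in $(\Z/2^m\Z)^\times$ only for $m\leq 1$ and $m=2$ respectively), and the dichotomy on $j$ is exhaustive since $A(\Q_1(\sqrt{p}))\cong\Z/2\Z$; so your argument is valid and even sharpens the conclusion to $\#A(\Q_2(\sqrt{p}))\in\{2,4\}$. What the paper's route buys is brevity (two citations); what yours buys is independence from \cite{L-S_arx} and from the auxiliary field $M$.

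For the first assertion, the paper simply cites \cite[Lemma 1]{nishino} and the proof of \cite[Theorem 2]{nishino}: with the totally positive factorization $p=p_1p_2$ of Theorem \ref{f-k}, the prime above $2$ is unramified in $\Q_1(\sqrt{p_i})/\Q_1$, whence $\Q_1(\sqrt{p},\sqrt{p_1})$ is the unique unramified quadratic extension of $\Q_1(\sqrt{p})$. Your plan to redo this by a local computation at $(\sqrt{2})$ is reasonable, but the step ``exactly one of $p_1,-p_1$ lies in the unramified class'' is imprecise on two counts: the factorization is only determined up to the full unit group $\pm(1+\sqrt{2})^{\Z}$ of $\Z[\sqrt{2}]$, not just signs, and a totally negative representative would introduce ramification at the archimedean places, so it could never generate a subfield of the (wide) $2$-Hilbert class field. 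The statement you actually need is that the \emph{totally positive} $p_1$ is locally a square times an unramified unit at $(\sqrt{2})$, and this is precisely where $p\equiv 9\Mod{16}$ together with $\left(\frac{2}{p}\right)_4=-1$ enters -- i.e.\ you would be reproving Nishino's lemma, which is fine, but the dichotomy as you state it should be replaced by that sharper claim.
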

\begin{proof} 
If $p \equiv 9 \Mod{16}$, then from \cite{fukuda-komatsu, nishino}, there exist totally positive prime elements $p_1, p_2$ in $\Q_1$ such that $p = p_1p_2$. Furthermore, if $\left( \dfrac{2}{p} \right)_4 = -1$, then from \cite[Lemma 1]{nishino} and the proof of \cite[Theorem 2]{nishino}, the prime above 2 is unramified in $\Q_1(\sqrt{p_i})/\Q_1$. Thus, $\Q_1(\sqrt{p}, \sqrt{p_1})/\Q_1(\sqrt{p})$ is an unramified abelian extension of degree 2, and $L(\Q_1(\sqrt{p})) = \Q_1(\sqrt{p}, \sqrt{p_1})$.

\noindent From \cite[Proposition 3.6]{mouhib-mova}, we note that when $p \equiv 9 \Mod {16}$ and $\left( \dfrac{2}{p} \right)_4 = -1$, the Iwasawa module corresponding to $\Q(\sqrt{p})$ is cyclic if and only if we have $A(M) \cong (2,2)$, where $M = \Q\left(\sqrt{p(2 + \sqrt{2})}\right)$. Since the cyclicity of the module is already established, due to the conditions taken on $p$, we obtain $A(M) \cong (2,2)$. Employing \cite[Proposition 3.5]{L-S_arx} for the field $\Q_2(\sqrt{p})$ and using $A(M) \cong (2,2)$, we derive $\#A(\Q_2(\sqrt{p})) \leq 1/2 \cdot \#A(\Q_1(\sqrt{p}))\cdot \#A(M) = 1/2 \cdot 2 \cdot 4 = 4$. 
\end{proof}

Hereafter, for $p \equiv  9 \Mod{16}$ and $\left( \dfrac{2}{p} \right)_4 = -1$, $p_1$ and $p_2$ will always denote the factors of $p$ in $\Q_1$ as mentioned in the proof of Proposition \ref{A(Q_n(p)}. We now prove a lemma regarding the order of $A(\mK)$ for $\mK = \Q(\sqrt{pr})$, where the primes $p$ and $r$ satisfy $p \equiv 1 \Mod{4}$ and $r \equiv 3 \Mod{4}$. A particular case of this result can be found in \cite{nishino}, although we present a more general version here for the sake of completeness.

\begin{lemma}\label{A_(k0) = 2}
Let $\mK = \Q(\sqrt{pr})$ such that $p \equiv 1 \Mod{4}$, $r \equiv 3 \Mod{4}$, and $\left( \dfrac{p}{r} \right) = -1$. Then, $\#A(\mK) = 2$.
\end{lemma}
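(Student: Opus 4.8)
The plan is to exploit the fact, already recorded above, that $K=\Q(\sqrt p,\sqrt r)$ is the genus field of $\mK=\Q(\sqrt{pr})$. Thus $K/\mK$ is an unramified abelian extension of degree $2$, so $K\subseteq L(\mK)$. I would then reduce the whole statement to the claim $A(K)=1$: granting it, $L(\mK)/K$ is an unramified abelian $2$-extension, hence $L(\mK)\subseteq L(K)=K$; combined with $K\subseteq L(\mK)$ this gives $L(\mK)=K$, and therefore $\#A(\mK)=[L(\mK):\mK]=[K:\mK]=2$.

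To prove $A(K)=1$ I would apply Chevalley's genus formula (Theorem~\ref{genusfor}) to the quadratic extension $K/\Q(\sqrt r)$ --- choosing $\Q(\sqrt r)$, and not $\Q(\sqrt p)$, as the base field is what keeps the computation short. First recall that $A(\Q(\sqrt r))$ is trivial (classical genus theory for a prime $r\equiv 3\Mod 4$; it also follows from Theorem~\ref{A(Qn_r)}). Next I would pin down the ramification in $K/\Q(\sqrt r)$: since $\left(\frac pr\right)=-1$, the prime $p$ is inert in $\Q(\sqrt r)$, and a relative-discriminant (equivalently conductor--discriminant) computation shows that $\mathfrak{d}_{K/\Q(\sqrt r)}$ is supported only at the prime above $p$; in particular no prime above $2$ ramifies in $K/\Q(\sqrt r)$, because $p\equiv 1\Mod 4$ makes $\Q(\sqrt p)/\Q$ unramified at $2$. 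As $K$ is totally real, no infinite place ramifies either, so the number of ramified places is $t=1$. Writing $G={\rm{Gal}}(K/\Q(\sqrt r))$, the genus formula then gives
\[
\#A(K)^{G}=\#A(\Q(\sqrt r))\cdot\frac{2^{\,t-1}}{\big[E(\Q(\sqrt r)):E(\Q(\sqrt r))\cap N_{K/\Q(\sqrt r)}(K^\times)\big]}=\frac{1}{\big[E(\Q(\sqrt r)):E(\Q(\sqrt r))\cap N_{K/\Q(\sqrt r)}(K^\times)\big]}\le 1 .
\]
Since the left-hand side is the order of a group, it must equal $1$. Finally, the triviality of $A(\Q(\sqrt r))$ makes the lifting map $A(\Q(\sqrt r))\to A(K)$ have trivial image, so Proposition~\ref{rmk to genus} applies: $2^{\,{\rm{rank}}_2 A(K)}=\#A(K)^{G}=1$, hence ${\rm{rank}}_2 A(K)=0$ and $A(K)=1$.

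Combining the two steps yields $L(\mK)=K$ and hence $\#A(\mK)=2$. I do not expect a serious obstacle along this route; the two points needing a little care are the triviality of $A(\Q(\sqrt r))$ and the verification that no prime above $2$ ramifies in $K/\Q(\sqrt r)$ (so that $t=1$). By contrast, the naive attempt to run the genus formula over $\Q(\sqrt p)$ would be the real difficulty: there $t$ is $2$ or $3$ according to $p\bmod 8$, and one has to compute several local norm-residue (Hilbert) symbols --- in particular that the fundamental unit of $\Q(\sqrt p)$ is a non-square modulo the inert prime above $r$ --- in order to evaluate the unit-norm index.
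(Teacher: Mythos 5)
Your argument is correct, but it takes a genuinely different route from the paper's. The paper never leaves $\mK$: genus theory gives ${\rm rank}_2A(\mK)=1$, so $A(\mK)$ is cyclic; the class of the ramified prime $\mr$ above $r$ has order at most $2$, is nontrivial because $\left(\frac{p}{r}\right)=-1$ forces $\mr$ to be inert in the unramified quadratic extension $K/\mK$ (so $\mr$ cannot be principal), and its nontrivial Frobenius in ${\rm Gal}(K/\mK)\cong A(\mK)/A(\mK)^2$ shows $[\mr]$ generates $A(\mK)$ modulo squares, whence $A(\mK)=\{id,[\mr]\}$ by Nakayama's lemma. You instead prove $A(K)=1$ first (Chevalley's formula for $K/\Q(\sqrt r)$, using that $A(\Q(\sqrt r))$ is trivial and $t=1$) and then squeeze $K\subseteq L(\mK)\subseteq L(K)=K$. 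Your ramification count is right: only the single prime of $\Q(\sqrt r)$ above $p$ ramifies, since $p\equiv 1\Mod{4}$ kills ramification above $2$; two small glosses are that inertness of $p$ in $\Q(\sqrt r)$ is literally governed by $\left(\frac{r}{p}\right)$, which equals $\left(\frac{p}{r}\right)$ by reciprocity precisely because $p\equiv 1\Mod 4$, and that the genus-field statement $K=\mK_G$ (hence $K/\mK$ unramified), recorded in the paper only under Condition (\ref{Cond}), does hold verbatim under the weaker hypotheses of this lemma. Comparing the two routes: the paper's proof is self-contained at the level of $\mK$ and produces an explicit generator of $A(\mK)$, namely the class of the prime above $r$, a device it reuses later (cf.\ Proposition \ref{2 is principal}); your proof imports the classical oddness of the class number of $\Q(\sqrt r)$ (covered by Theorem \ref{A(Qn_r)}), but yields $A(K)=1$ as a by-product under these weaker hypotheses --- a fact the paper only establishes afterwards (Lemma \ref{rank A(K1)}) by deducing it from this very lemma via Burnside's basis theorem, so your order of deduction is reversed yet not circular. (Also, once $\#A(K)^{G}=1$ is known you could conclude $A(K)=1$ even without Proposition \ref{rmk to genus}, since a group of order $2$ acting on a nontrivial finite $2$-group always has nontrivial fixed points.)
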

\begin{proof}
As $\mK$ is real, its genus field $\mK_G$ is the field $\Q( \sqrt{p}, \sqrt{r})$, which is a quadratic extension of $\mK$. Since the 2-ranks of ${\rm{Gal}}(\mK_G/\mK)$ and $A(\mK)$ are equal, we deduce that ${\rm{rank}}_2A(\mK) = 1$, and thus $A(\mK)$ is cyclic.

\noindent Let $\mr$ be the prime ideal above $r\Z$ in $\mK$. Then, since $r\Z$ is ramified in $\mK$, the order of the ideal class $[ \mr ]$ is at most 2 in $A(\mK)$. The ideal $r\Z$ is inert in $\Q(\sqrt{p})$ as $\left( \dfrac{p}{r} \right) = -1$, and thus $\mr$ is inert in $\mK_G/ \mK$. On that account, it is clear that $\mr$ does not split in an unramified extension of $\mK$, and hence it is not principal in $\mK$. Therefore, $o[\mr] = 2$. 

\noindent From \cite[Theorem 3.3(b)]{janusz}, Gal($\mK_G/\mK$) is isomorphic to $A(\mK)/A(\mK)^2$. Since $\mr$ does not split in $\mK_G/ \mK$, the corresponding Forbenius element $\left(\dfrac{\mK_G/ \mK}{\mr}\right)$ must be non-trivial in Gal($\mK_G/ \mK$). Hence, $[\mr] \not\in A(K)^2$. Now, $A(\mK) = A(\mK)^2 \cup [\mr]A(\mK)^2 = \{ id, [\mr] \}A(\mK)^2$ implies that $A(\mK) =  \{ id, [\mr] \}$ by Nakayama's lemma.
\end{proof}

For $k = \Q(\sqrt{pr})$ where $p$ and $r$ satisfy Condition 1, it is obvious from Lemma \ref{A_(k0) = 2} that $\#A(k) = 2$, and that its genus field is $K = \Q(\sqrt{p}, \sqrt{r})$ which is the field of our interest. As a first step, we study $A(K)$ and the rank of $A(K_1)$, where $K_1 = K(\sqrt{2})$.

\begin{lemma}\label{rank A(K1)}
Let $K = \Q(\sqrt{p}, \sqrt{r})$ where $p$ and $r$ satisfy Condition 1. Then $A(K)$ is the trivial group, and ${\rm{rank}}_2A(K_1) = 1$, where $K_1 = K(\sqrt{2})$.
\end{lemma}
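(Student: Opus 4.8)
The plan is to treat the two assertions separately: the triviality of $A(K)$ follows quickly from class field theory applied to $k=\Q(\sqrt{pr})$, while ${\rm{rank}}_2A(K_1)=1$ is obtained by squeezing $\#A(K_1)^G$ (with $G={\rm{Gal}}(K_1/K)$) between an upper bound from the genus formula and a lower bound from the known structure of $A(k_1)$.

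First, $A(K)$ is trivial. By Lemma~\ref{A_(k0) = 2}, $\#A(k)=2$, and $K=\Q(\sqrt p,\sqrt r)$ is the genus field of $k$, so $K/k$ is an unramified quadratic extension; since $\#A(k)=2$ this forces $K=L(k)$, the $2$-Hilbert class field of $k$. As $A(k)$ is cyclic, the consequence of Burnside's basis theorem recalled in Section~2 shows that the $2$-class field tower of $k$ stops at $L(k)$, i.e.\ $\tL(k)=L(k)=K$; in particular $A(K)$ is trivial.

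For the rank of $A(K_1)$, set $G={\rm{Gal}}(K_1/K)$, of order $2$. Since $A(K)$ is trivial the lifting map $A(K)\to A(K_1)$ is trivial, so Proposition~\ref{rmk to genus} gives $\#A(K_1)^G=2^{{\rm{rank}}_2A(K_1)}$, while the genus formula (Theorem~\ref{genusfor}) for $K_1=K(\sqrt2)$ over $K$ reads $\#A(K_1)^G=\#A(K)\cdot 2^{t-1}/[E(K):E(K)\cap N_{K_1/K}(K_1^{\times})]=2^{t-1}/[E(K):E(K)\cap N_{K_1/K}(K_1^{\times})]$, where $t$ is the number of places of $K$ ramified in $K_1$. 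The first task is to show $t=2$: no archimedean place ramifies, as $K$ and $K_1$ are totally real, and no odd prime ramifies in $K(\sqrt2)/K$, so $t$ equals the number of primes of $K$ above $2$. Since $p\equiv1\Mod{8}$ the prime $2$ splits in $\Q(\sqrt p)$, while $2$ ramifies in $\Q(\sqrt r)$ and $\Q(\sqrt{pr})$ because $r\equiv3\Mod{4}$; hence the decomposition field of $2$ in $K/\Q$ is exactly $\Q(\sqrt p)$, so $2$ has precisely two prime divisors $\mathfrak{P}_1,\mathfrak{P}_2$ in $K$, with $K_{\mathfrak{P}_i}\cong\Q_2(\sqrt r)$ a ramified quadratic extension of $\Q_2$. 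All three quadratic subfields $\Q_2(\sqrt2),\Q_2(\sqrt r),\Q_2(\sqrt{2r})$ of $\Q_2(\sqrt2,\sqrt r)$ are ramified over $\Q_2$, so $\Q_2(\sqrt2,\sqrt r)/\Q_2$ is totally ramified; therefore $K_{\mathfrak{P}_i}(\sqrt2)/K_{\mathfrak{P}_i}$ is ramified, both $\mathfrak{P}_i$ ramify in $K_1$, and $t=2$. Substituting, $\#A(K_1)^G=2/[E(K):E(K)\cap N_{K_1/K}(K_1^{\times})]\le2$, hence ${\rm{rank}}_2A(K_1)\le1$.

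It remains to rule out $A(K_1)=0$ — equivalently, by the displayed genus formula, that every unit of $K$ is a norm from $K_1$, which is the genuinely delicate point, since the full unit group of the real biquadratic field $K$ is not transparent. Rather than analyzing these units directly, I would deduce $A(K_1)\ne0$ from an auxiliary field: put $k_1=k(\sqrt2)=\Q(\sqrt2,\sqrt{pr})$, so that $A(k_1)\cong(2,2)$ by Theorem~\ref{f-k} and $K_1/k_1$ is an unramified abelian extension of degree $2$ (recall from Section~2 that $K_n/k_n$ is unramified for every $n$). Then $K_1\subseteq L(k_1)$, and by class field theory $N_{K_1/k_1}(A(K_1))$ is the subgroup of $A(k_1)$ of index $[K_1:k_1]=2$, hence nontrivial; so $A(K_1)\ne0$ and ${\rm{rank}}_2A(K_1)\ge1$, which together with the upper bound yields ${\rm{rank}}_2A(K_1)=1$. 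Thus the crux of the argument is precisely this use of the unramified extension $K_1/k_1$ and the known structure of $A(k_1)$ to decide which of the two possible values $\#A(K_1)^G$ takes; the local ramification bookkeeping behind $t=2$ is elementary but must be carried out with care, and it uses only $p\equiv1\Mod{8}$ and $r\equiv3\Mod{4}$.
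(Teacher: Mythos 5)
Your proposal is correct and follows essentially the same route as the paper: $A(K)$ trivial via $K=L(k)=\tL(k)$ and Burnside, the upper bound ${\rm{rank}}_2A(K_1)\le 1$ via the genus formula with exactly the two primes of $K$ above $2$ ramifying in $K_1/K$, and the lower bound from $K_1/k_1$ unramified with $A(k_1)\cong(2,2)$. The only cosmetic difference is that the paper gets the lower bound by observing that $L(k_1)/K_1$ is an unramified quadratic extension, whereas you phrase the same fact through the norm-index statement of class field theory; the inputs and content are identical.
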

\begin{proof}
From Lemma \ref{A_(k0) = 2}, since $A(k) \cong (2)$, $k_G = K$ must be the 2-Hilbert class field of $k$. As $K/k$ is a cyclic extension, according to Burnside's basis theorem, $\tL(k) = K = \tL(K)$, and $A(K) = \{ id \}$.

\noindent Viewing $K$ as a biquadratic extension over $\Q$ with subfields $\Q(\sqrt{p}), \Q(\sqrt{r})$, and $k$, it can be verified that the prime ideal $2\Z$ of $\Q$ has two prime factors in $K$. In turn, these factors are the only ramified primes in the extension $K_1/K$. Since $A(K)$ is trivial, the genus formula for $K_1/K$ provides the upper bound: ${\rm{rank}}_2A(K_1) \leq 1$.

\noindent As stated in Theorem \ref{f-k}, the 2-Hilbert class field of $k_1 = \Q(\sqrt{2}, \sqrt{pr})$ is $ L(k_1) = \Q(\sqrt{2}, \sqrt{p}, \sqrt{r}, \sqrt{p_1} )$. We note that the field $K_1$ is an unramified extension of $k_1$, and is properly contained in $L(k_1)$. Therefore, $L(k_1)/K_1$ is an unramified quadratic extension and thus, ${\rm{rank}}_2A(K_1) \geq 1$. Integrating this with the previous argument, we obtain ${\rm{rank}}_2A(K_1) = 1$.
\end{proof}

In \cite[Lemma 2.1]{kumakawa2}, it can be found that for a particular infinite family of quadratic fields denoted by $F$, $\#A(F_{n+1}) \leq \#A(F_n)\cdot \#A(F_n^{\prime})$. There, $F_n$ and $F_{n+1}$ are the $n$-th and $(n+1)$-th layers of the $\Z_2$-extension of $F$ respectively, and $\Q_n \subset F_n^{\prime} \neq F_n \subset F_{n+1}$. In our work, we shall adapt those arguments for the biquadratic extension $K_n = \Q_n(\sqrt{p}, \sqrt{r})/\Q_n$. We note that this extension does not involve layers from the cyclotomic $\Z_2$-extension of $K$ other than $K_n$. 
\begin{center}
\begin{figure}[hbt!] 
 \begin{tikzpicture}

    \node (Q1) at (0,0) {$\Q_n$};
    \node (Q2) at (3,2) {$k_n = \Q_n(\sqrt{pr})$};
    \node (Q3) at (0,2) {$\Q_n(\sqrt{p})$};
    \node (Q4) at (-3,2) {$\Q_n(\sqrt{r})$};  
    \node (Q5) at (0,4) {$K_{n} = \Q_n(\sqrt{p}, \sqrt{r})$};
    
    \draw (Q1)--(Q2);
    \draw (Q1)--(Q3); 
    \draw (Q1)--(Q4);
    \draw (Q2)--(Q5);
    \draw (Q3)--(Q5);
    \draw (Q4)--(Q5);
    
    \node (R1) at (2.1,3.1) {$\langle \sigma\tau \rangle$};
    \node (R2) at (0.4, 3.1) {$\langle \tau \rangle$};
    \node (R3) at (-2, 3.1) {$\langle \sigma \rangle$};
     \end{tikzpicture}
    \end{figure}
\end{center}

\begin{lemma}\label{bound of A(K_n)}
Let $K = \Q(\sqrt{p}, \sqrt{r})$ where $p$ and $r$ satisfy Condition (\ref{Cond}). Let $K_n$ be the $n^{th}$-layer of $K$ in the $\Z_2$-extension of $K$. Suppose ${\rm{Gal}}(K_n/\Q_n(\sqrt{p})) = \langle \tau \rangle$ and ${\rm{Gal}}(K_n/\Q_n(\sqrt{r})) = \langle \sigma \rangle$. Then, the following hold for $n \geq 1$.
\begin{enumerate}
\item $\#A(K_n) = \#A(K_n)^{\tau + 1} \cdot \#A(K_n)^{\langle \sigma\tau \rangle} = 2\cdot\#A(K_n)^{\tau + 1}$.
\item $\#A(\Q_n(\sqrt{p})) \leq \#A(K_n) \leq 2\cdot\#A(\Q_n(\sqrt{p}))$.
\end{enumerate}
\end{lemma}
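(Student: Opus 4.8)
\noindent\emph{Proof strategy.} Write $M=A(K_n)$ and view it as a module over $\Z_2[G]$, where $G=\mathrm{Gal}(K_n/\Q_n)=\langle\sigma,\tau\rangle\cong(2,2)$, $\langle\tau\rangle=\mathrm{Gal}(K_n/\Q_n(\sqrt p))$, $\langle\sigma\rangle=\mathrm{Gal}(K_n/\Q_n(\sqrt r))$ and $\langle\sigma\tau\rangle=\mathrm{Gal}(K_n/k_n)$. The plan is to deduce both assertions from the class-field-theoretic relation $j\circ N=1+\tau$ on ideal class groups, once the $\sigma$-action on $M$ and the size of its $\sigma\tau$-ambiguous part are understood. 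The first input is that $\sigma$ acts as $-1$ on $M$: by Theorem~\ref{A(Qn_r)} the module $X(\Q(\sqrt r)_\infty)$ is trivial, and since $2$ is totally ramified in $\Q(\sqrt r)_\infty/\Q(\sqrt r)$ the norm maps between the $2$-class groups of the layers are surjective, so $A(\Q_n(\sqrt r))$ is a quotient of $X(\Q(\sqrt r)_\infty)$ and hence trivial for every $n$. Thus the lifting map $A(\Q_n(\sqrt r))\to A(K_n)$ has trivial image, and Proposition~\ref{rmk to genus} applied to $K_n/\Q_n(\sqrt r)$ gives $x^\sigma=x^{-1}$ for all $x\in M$.

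From $x^\sigma=x^{-1}$ one gets $x^{\sigma\tau}=(x^\tau)^{-1}$, so $x\in M^{\langle\sigma\tau\rangle}\iff x^\tau=x^{-1}\iff x^{1+\tau}=1$; hence $\ker(1+\tau\colon M\to M)=M^{\langle\sigma\tau\rangle}$, and the first isomorphism theorem for the endomorphism $1+\tau$ of $M$ (whose image is $M^{\tau+1}$) gives $\#M=\#M^{\tau+1}\cdot\#M^{\langle\sigma\tau\rangle}$. It remains to compute $\#M^{\langle\sigma\tau\rangle}$, for which I would invoke Chevalley's formula (Theorem~\ref{genusfor}) for $K_n/k_n$. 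This extension is unramified at every finite prime (as established in Section~2) and, $k_n$ and $K_n$ being totally real, at the infinite places as well, so the ramification count is $t=0$; every unit of $k_n$ is then a local norm at every place (the norm is onto the local units at the unramified finite primes and onto $\R^\times$ at the split real places), so by the Hasse norm theorem for the cyclic extension $K_n/k_n$ every unit of $k_n$ is a global norm, i.e.\ $[E(k_n):E(k_n)\cap N_{K_n/k_n}(K_n^\times)]=1$. Chevalley's formula then yields $\#M^{\langle\sigma\tau\rangle}=\#A(k_n)/2$, and $A(k_n)\cong(2,2)$ for $n\ge 1$ by Theorem~\ref{f-k}, so $\#M^{\langle\sigma\tau\rangle}=2$; substituting into the previous identity proves part~(1).

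For part~(2), the lower bound $\#A(\Q_n(\sqrt p))\le\#A(K_n)$ is the standard surjectivity of the norm $N_{K_n/\Q_n(\sqrt p)}\colon A(K_n)\to A(\Q_n(\sqrt p))$ for the ramified quadratic extension $K_n/\Q_n(\sqrt p)$ (as already used in the introduction). For the upper bound, with $j_p\colon A(\Q_n(\sqrt p))\to A(K_n)$ the lifting map one has $M^{\tau+1}=(1+\tau)(M)=j_p\big(N_{K_n/\Q_n(\sqrt p)}(M)\big)\subseteq j_p\big(A(\Q_n(\sqrt p))\big)$, so $\#M^{\tau+1}\le\#A(\Q_n(\sqrt p))$, and by part~(1), $\#A(K_n)=2\#M^{\tau+1}\le 2\#A(\Q_n(\sqrt p))$.

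The step I expect to be the main obstacle is the evaluation $\#M^{\langle\sigma\tau\rangle}=2$: it hinges on the unit index in Chevalley's formula being exactly $1$, which relies on $K_n/k_n$ being unramified at \emph{every} place (so that the Hasse norm theorem applies to all units of $k_n$) together with $A(k_n)\cong(2,2)$ from the work of Fukuda--Komatsu and Nishino. A secondary point requiring care is that $\sigma$ acts as $-1$ on $A(K_n)$ for \emph{all} $n\ge 1$ and not merely for large $n$, which is why one must use the triviality of the whole module $X(\Q(\sqrt r)_\infty)$ together with surjectivity of the norm maps, rather than only the vanishing of the Iwasawa invariants of $\Q(\sqrt r)$.
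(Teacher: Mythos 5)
Your proof is correct, and it follows the same overall skeleton as the paper (use triviality of $A(\Q_n(\sqrt{r}))$ to make $\sigma$ act as $-1$, split $\#A(K_n)$ as $\#A(K_n)^{\langle\sigma\tau\rangle}\cdot\#A(K_n)^{\tau+1}$, then bound $\#A(K_n)^{\tau+1}$ by $\#A(\Q_n(\sqrt{p}))$), but the two key sub-steps are handled differently. For $\#A(K_n)^{\langle\sigma\tau\rangle}=2$, the paper only extracts the upper bound $\le 2$ from Chevalley's formula and then pins down equality by a case analysis on the $\tau$-action: if $\tau$ acts as inverse it invokes ${\rm rank}_2A(K_1)=1$ (Lemma \ref{rank A(K1)}) and norm surjectivity to force $\#A(K_n)=2$, and otherwise it produces an order-$2$ element of $A(K_n)^{\tau+1}$ via Cauchy's theorem and checks it is $\sigma\tau$-ambiguous. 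You instead evaluate the ambiguous class number exactly, observing that $K_n/k_n$ is unramified at all places (both fields totally real), so every unit of $k_n$ is a local norm everywhere and hence, by the Hasse norm theorem for this cyclic extension, a global norm; this makes the unit index $1$ and gives $\#A(K_n)^{\langle\sigma\tau\rangle}=\#A(k_n)/2=2$ directly, without needing Lemma \ref{rank A(K1)} or the case split. For the upper bound in part (2), the paper shows $\ker N\subseteq\ker(1+\tau)$ by a Chebotarev argument (choosing a split prime representative and showing its norm is principal), whereas you use the identity $j\circ N=1+\tau$ (which the paper itself quotes later, in the proof of Theorem \ref{Thm2}) to get $A(K_n)^{\tau+1}=j_p(N(A(K_n)))\subseteq j_p(A(\Q_n(\sqrt{p})))$, hence $\#A(K_n)^{\tau+1}\le\#A(\Q_n(\sqrt{p}))$; this is cleaner and sidesteps the delicate point of working with class representatives inside the $2$-part. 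The only places where you assert slightly more than you prove are routine: that $2$ is totally ramified in $\Q(\sqrt{r})_\infty/\Q(\sqrt{r})$ from the bottom layer (true, since no quadratic subfield of $\Q(\sqrt{r})_\infty$ is unramified at $2$ when $r\equiv 3\Mod{4}$), and that Chevalley's formula applies with $t=0$; neither affects correctness.
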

\begin{proof}
From Theorem \ref{A(Qn_r)}, $\#A(\Q_n(\sqrt{r})) = 1$ for every $n \geq 0$, and we deduce from Proposition \ref{rmk to genus} that $\sigma$ acts as inverse on $A(K_n)$. This leads to the equality of the subgroups $A(K_{n})^{\sigma\tau -1}$ and $A(K_{n})^{\tau +1}$ of $A(K_n)$. Here, $A(K_{n})^{\sigma\tau -1} = \{ [\mathfrak{a}]^{\sigma\tau}\cdot [\mathfrak{a}]^{-1}: [\mathfrak{a}] \in A(K_{n}) \}$ and $A(K_{n})^{\tau +1}$ is defined similarly.

\noindent (1). From the exact sequence
$$1 \longrightarrow A(K_{n})^{ \langle \sigma\tau \rangle} \longrightarrow A(K_{n}) \longrightarrow A(K_{n})^{\sigma\tau -1} \longrightarrow 1,$$
we obtain $\#A(K_{n}) = \#A(K_{n})^{ \langle \sigma\tau \rangle} \cdot \#A(K_{n})^{\sigma\tau -1} = \#A(K_{n})^{ \langle \sigma\tau \rangle}\cdot\#A(K_{n})^{\tau +1}$. We apply genus formula for $K_n/k_n$ as ${\rm{Gal}}(K_n/k_n) = \langle \sigma\tau \rangle$. As this extension is unramified, and $\#A(k_n) = 4$ for all $n \geq 1$ (from Theorem \ref{f-k}), $ \#A(K_n)^{\langle \sigma\tau \rangle} \leq 2^{-1}\cdot 4 = 2$ irrespective of the action of $\tau$. We now look at two possibilities:

\noindent{\bf Case 1}. {\bf $\tau$ acts as inverse on $A(K_n)$}: If $[ \mathfrak{P}]^{\tau} = [\mathfrak{P}]^{-1}$ for every $[ \mathfrak{P}] \in A(K_n)$, then $A(K_n)^{\tau +1} = \{ id \}$, and hence, $\#A(K_n) =  \#A(K_n)^{\langle \sigma\tau \rangle}$ and thus $\#A(K_n) \leq 2$. But from Lemma \ref{rank A(K1)}, ${\rm{rank}}_2A(K_1) = 1$ implies ${\rm{rank}}_2A(K_n) \geq 1$, and $\#A(K_n) \geq 2$. Therefore, in this case, $\#A(K_n) = 2 = 2\cdot \#A(K_{n})^{\tau +1}$.

\noindent{\bf Case 2}. {\bf $\tau$ does not act as inverse on $A(K_n)$}: In this case, $A(K_n)^{\tau +1}$ will be a non-trivial subgroup of $A(K_n)$. Since $A(K_n)$ is an abelian 2-group, so is $A(K_n)^{\tau +1}$. Therefore, by Cauchy's theorem on finite groups, there exists $[\q] \in A(K_n)^{\tau +1}$ of order 2. It is clear that $[\q]^{\tau} = [\q]$, and $[\q]^{\sigma\tau} = ([\q]^{-1})^{\tau} = [\q]^{\tau} = [\q]$. This implies $[\q] \in A(K_n)^{\langle \sigma\tau \rangle}$, and that $\#A(K_n)^{\langle \sigma\tau \rangle} \geq 2$. From Case 1, it is apparent that $\#A(K_n)^{\langle \sigma\tau \rangle} \leq 2$, and hence,  $\#A(K_n)^{\langle \sigma\tau \rangle} = 2$. This proves that $\#A(K_n) = 2\cdot\#A(K_n)^{\tau + 1}$.

\vspace{0.2cm}

\noindent (2). Let $N$ be the norm map from $A(K_n)$ to $A(\Q_n(\sqrt{p}))$. The map $N$ is surjective as the extension $K_n/\Q_n(\sqrt{p})$ is ramified at primes above 2 and $r$ leading to the first inequality $\#A(\Q_n(\sqrt{p})) \leq \#A(K_n)$. We now observe the kernel of $N$ by taking an element $[\mathfrak{P}] \in Ker(N)$. By Chebotarev's density theorem, we may choose $\mathfrak{P}$ to be a prime ideal in $K_n$ lying above a split prime $\p$ in $\Q_n(\sqrt{p})$. Now, $\p \subseteq \mathfrak{P}\mathfrak{P}^{\tau} \cap \mathcal{O}_{\Q_n(\sqrt{p})} = \langle \alpha \rangle$, where $\alpha \in \mathcal{O}_{\Q_n(\sqrt{p})}\setminus \{ 0\}$. If $\alpha$ is a unit, then $1 \in \mathfrak{P}\mathfrak{P}^{\tau}$, which is not possible. Hence, $\alpha$ is not a unit, and $\p = \langle \alpha \rangle$ as $\p$ is maximal in $\mathcal{O}_{\Q_n(\sqrt{p})}$. Therefore, $\p\mathcal{O}_{K_n} = \mathfrak{P}\mathfrak{P}^{\tau}$ is principal, and thus, $[\mathfrak{P}]^{1 + \tau} = id$. Hence, $Ker(N) \subseteq T:=\{ [\mathfrak{P}] \in A(K_n) : [\mathfrak{P}]^{1 + \tau} = id \}$, and $\#A(K_n)^{\tau + 1} = \#(A(K_n)/T) \leq \#(A(K_n)/Ker(N)) = \#A(\Q_n(\sqrt{p}))$. From part 1, $\#A(K_n) = 2\cdot \#A(K_n)^{\tau + 1} \leq 2\cdot\#A(\Q_n(\sqrt{p}))$.
 \end{proof}

\begin{rmk}\label{A(tau +1) is cyclic}
From Lemma \ref{bound of A(K_n)}, we observe that $A(K_n)^{\langle \sigma\tau \rangle} \cong (2)$. Therefore, there exists $[\ma]$ of order 2 in $A(K_n)$        such that $A(K_n)^{\langle \sigma\tau \rangle} = \{ id, [\ma] \}$. If $[\q] \in A(K_n)$ is of order two such that $[\q] \in A(K_n)^{\tau + 1}$, then from Case 2 of Lemma \ref{bound of A(K_n)}, $[\q] \in A(K_n)^{\langle \sigma\tau \rangle}$. Therefore, $[\q] = [\ma]$. This implies that $A(K_n)^{\tau + 1}$ can have at most one element of order 2, and if it does, then it must be $[\ma]$. Hence, $A(K_n)^{\tau + 1}$ is cyclic of order $\#A(K_n)/2$.
\end{rmk}

\noindent Suppose $[\mathfrak{P}] \neq [\ma] \in A(K_n)$ is another element of order 2, and let $H = \{ id, [\mathfrak{P}] \}$ be another subgroup of $A(K_n)$. Then, $H$ and $A(K_n)^{\tau +1 }$ intersect trivially. Consequently, $\#(H\cdot A(K_n)^{\tau +1}) = \#A(K_n)$, and hence $H\cdot A(K_n)^{\tau +1} = A(K_n)$. Since by Remark \ref{A(tau +1) is cyclic} $A(K_n)^{\tau +1}$ is cyclic, the group $A(K_n) = H\cdot A(K_n)^{\tau +1}$ can have at most three elements of order 2. This implies that ${\rm{rank}}_2A(K_n) \leq 2$ for all $n \geq 0$. We shall now see that the rank is always 1 for $n \geq 1$.

\subsection*{Proof of Theorem \ref{Thm1}}

\begin{proof}
From Theorem \ref{f-k}, since $A(k_n) \cong (2,2)$ for every $n \geq 1$, $K_n \subset L(k_n) = \Q_n(\sqrt{p}, \sqrt{r}, \sqrt{p_1}) \subseteq \tL(k_n)$ for every $n$. As $K_n/k_n$ is unramified, we obtain $\tL(K_n) = \tL(k_n)$. Thus, we have the tower of field extensions:
$$\Q ~ \subset ~ \Q_1 ~ \subset ~ \Q_2 ~ \subset ~ k_2 ~ \subset ~ \mathfrak{K}_1, ~ \mathfrak{K}_2, ~ \mathfrak{K}_3 ~ \subset L(k_2) ~ \subset L(K_2) ~ \subseteq  ~ \tL(k_2) = \tL(K_2),$$
where, $\mathfrak{K}_1 = \Q(\sqrt{pr}, \sqrt{p_1})$, $\mathfrak{K}_2 = \Q(\sqrt{pr}, \sqrt{p_2})$, and $\mathfrak{K}_3 = K_2$. From Theorem \ref{gorenstein result}, $\tG_2 = {\rm{Gal}}(\tL(k_2)/k_2)$ has three subgroups of index two, and those are $\mG_i = {\rm{Gal}}(\tL(k_2)/\mathfrak{K}_i), \ i = 1,2,3$. By group theory, it can be seen that the possibilities of the subgroups $\mG_i$ of $\tG_2$ are $(2,2), \ \Z/2^{r}\Z ~ (r \in \{1,2, m-1 \} ), \ D_{2^{m-1}}$, and $Q_{2^{m-1}}$ (cf. \cite[Pages 27, 28]{mizu_thesis}). As $\mathfrak{K}_i/k_2$ is unramified for each $i$, $\tL(\mathfrak{K}_i) = \tL(k_2)$. Hence, the largest abelian quotient of $\mG_i$ will correspond to the maximal, abelian unramified extension of $\mathfrak{K}_i$, that is, $A(\mathfrak{K}_i) = \mG_i/\mG_i^{\prime}$. Again using Theorem \ref{gorenstein result}, $\mG_i/\mG_i^{\prime}$ is either $(2,2)$ or $\Z/2^{r}\Z ~ (r \in \{1,2, m-1 \})$. Out of these, if $\tG_2 \cong Q_8$ or $(2,2)$, then all the $A(\mathfrak{K}_i)$'s are cyclic, and isomorphic to each other. In particular, $A(K_2) = A(\mathfrak{K}_3)$ is cyclic. 

If $\tG_2$ is not isomorphic to $Q_8$ or $(2,2)$, there exist $i_1 \neq i_2 \neq i_3  \in \{1,2,3\}$ such that $A(\mathfrak{K}_{i_1}) \cong A(\mathfrak{K}_{i_2}) \cong (2,2)$, and $A(\mathfrak{K}_{i_3})$ is a cyclic 2-group. Consider $p_1$, $p_2$ as defined in Lemma \ref{rank A(K1)}. There exist  integers $a$ and $b$ such that $p_1 = a + b\sqrt{2}$, and $p_2 = a - b\sqrt{2}$. The minimal polynomial of $\sqrt{p_1}$ over $\Q$ is given by $f(x):= (x^2 - a)^2 - 2b^2$. Suppose $\alpha$ is a root of $f$. Substituting $\alpha^2$ with $t$ in $f(\alpha) = 0$, we note that $t$ can take the values $p_1$ as well as $p_2$. Therefore, $\alpha$ can assume the values $\sqrt{p_1},  -\sqrt{p_1},\sqrt{p_2}$, and $-\sqrt{p_2}$. Since $\pm\sqrt{p_2} \not\in \mathfrak{K_1}$, $\mathfrak{K_1}$ is not a Galois extension of $\Q$, and similarly, $\mathfrak{K_2}$ is also not a Galois extension of $\Q$. But, $\mathfrak{K_1}$ and $\mathfrak{K_2}$ are isomorphic as field extensions over $\Q$ as $\pm \sqrt{p_1}$ and $\pm \sqrt{p_2}$ are all conjugates over $\Q$. As a result, the class groups of $\mathfrak{K_1}$ and $\mathfrak{K_2}$, and in particular, $A(\mathfrak{K}_1)$ and  $A(\mathfrak{K}_{2})$ must be isomorphic. This implies that $A(\mathfrak{K}_3) = A(K_2)$ is always cyclic. From Lemma \ref{rank A(K1)}, $A(K_1)$ is cyclic, and from Theorem \ref{fukuda's result}, $A(K_n)$ is cyclic for all $n\geq 1$. Thus, $X(K_{\infty})$ is a cyclic module.

\noindent From Part 2 of Lemma \ref{bound of A(K_n)}, we recall that $\#A(K_n) \leq 2\cdot \#A(\Q_n(\sqrt{p}))$. Also, from Theorem \ref{mouhib-mova}, $\lambda$-invariant of $\Q(\sqrt{p})$ is 0 implies $\#A(\Q_n(\sqrt{p}))$ is bounded as $n$ grows. Combining both, we conclude that $\#A(K_n)$ is bounded as $n$ tends to infinity. Thus, the Iwasawa $\lambda$-invariant of $K$ is equal to $0$.   
\end{proof}

\subsection*{Proof of Corollary \ref{Cor1}}
\begin{proof}
Suppose $n_0 \geq 0$ is the stage such that $\#A(\Q_n(\sqrt{p})) = \#A(\Q_{n_0}( \sqrt{p}))$ for all $n \geq n_0$. Then, it is straightforward from Lemma \ref{bound of A(K_n)} and Theorem \ref{fukuda's result}, that $\#A(K_n) = \#A(K_{n_{0}+1})$ for all $n \geq n_0+1$. The inequality $\#A(K_n) \leq 2 \cdot \#A(\Q_n(\sqrt{p}))$ yields $\#X(K_{\infty}) \leq 2^{n_0 + 1}\cdot\#X(\Q_{\infty}(\sqrt{p}))$.    
\end{proof}

\subsection*{Proof of Corollary \ref{Cor2}}
\begin{proof}
Let $S = \{ p \}$ and $\Sigma = \phi$ be the empty set. Then, $A_{\Sigma}(\Q_n(\sqrt{r})) = A(\Q_n(\sqrt{r})) = \{id\}$ from Theorem \ref{A(Qn_r)}, and ${\rm{rank}}_2A_{\Sigma}(K_n) = 1$ from Theorem \ref{Thm1}. The quadratic extension $K_n/\Q_n(\sqrt{r})$ is ramified at all primes in $S \setminus\Sigma = S$, and unramified outside $S$. Theorem 3.1 of \cite{mizusawa4} states that if $F/\mathfrak{f}$ is a quadratic extension unramified outside $S$ and ramified at all primes in $S\setminus \Sigma$ with $A_{\Sigma}(\mathfrak{f}) = \{ id \}$, then ${\rm{rank}}_2A_S(\mathfrak{f}) = 1 + {\rm{rank}}_2A_{\Sigma}(F)$. Therefore, it is immediate that ${\rm{rank}}_2A_{\{p\}}(\Q_n(\sqrt{r})) = 1 + {\rm{rank}}_2A_{\Sigma}(K_n) = 1 + {\rm{rank}}_2A(K_n) = 2$. 
\end{proof}

\begin{rmk}
As $A(K_n)$ is cyclic for each $n$, by Burnside's basis theorem, $\tL(K_n) = L(K_n)$. Thus, $\tL(k_n) =  \tL(K_n) = L(K_n)$. This also implies that $L(K_{\infty}) = \tL(K_{\infty})$ is the maximal unramified 2-extension of $k_{\infty}$, which clearly is a finite extension of $k_{\infty}$. 
\end{rmk}  

\section{An equivalent criteria for $\#A(K_n) = \#A(\Q_n(\sqrt{p}))$}
\noindent It is imperative to understand the action of $\tau$ on $A(K_n)$, where ${\rm{Gal}}(K_n/\Q_n(\sqrt{p})) = \langle \tau \rangle$ in view of part 1 of Lemma \ref{bound of A(K_n)}. Firstly, we note an elementary consequence of Lemma \ref{bound of A(K_n)}.

\begin{propn}\label{tau as -1}
For ${\rm{Gal}}(K_n/\Q_n(\sqrt{p})) = \langle \tau \rangle$, $\tau$ acts on $A(K_n)$ as inverse if and only if $\#A(K_n) = 2$.
\end{propn}
\begin{proof}
As given in Case 1 of Lemma \ref{bound of A(K_n)}, $\tau$ acting as inverse on $A(K_n)$ implies that $A(K_n)^{\tau + 1} = \{ id \}$ and $\#A(K_n) = 2$. Conversely, suppose $A(K_n) = \{ id, ~ [\mb] \}$. If $[\mb]^{\tau} = id$, then $\mb^{\tau}$ must be principal, which also implies that $\mb$ is principal, which is a contradiction. Therefore, $[\mb]^{\tau} = [\mb] = [\mb]^{-1}$. 
\end{proof}

Suppose $A(K_n) = \langle [\mb] \rangle$. If $\#A(K_n) = 2$, then from Proposition \ref{tau as -1}, $\tau$ acts as inverse on $A(K_n)$. This is equivalent to $[\mb]^{\tau} = [\mb]$, because the order of $[\mb]$ is equal to 2. Otherwise, if $\#A(K_n) \geq 4$, then $\#A(K_n)^{\tau + 1} = \#A(K_n)/2$, and $A(K_n)$ is cyclic together imply that $A(K_n)^{\tau+1} = \langle ~ [\mb]^2 ~ \rangle$. Since every element of $A(K_n)^{\tau +1}$ is fixed by $\tau$, and $A(K_n)$ is a Galois module, we have $([\mb]^{\tau})^2 = ([\mb]^2)^{\tau} = [\mb]^2$. For a finite cyclic 2-group $G$, the map $x \mapsto x^2$ has exactly two elements in the kernel, namely the identity and the element of order 2. Therefore, if $([\mb]^{\tau})^2 = [\mb]^2$, then $[\mb]^{\tau} = [\mb]$ or $[\mb]^{\tau} = [\mb]\cdot[\ma]$, where $o([\ma]) = 2$.
 
\begin{rmk}\label{action of sigmatau}
We recall from Lemma \ref{bound of A(K_n)} that ${\rm{Gal}}(K_n/\Q_n(\sqrt{r})) = \langle \sigma \rangle$, and it acts as inverse on $A(K_n)$. From the above discussion, it is readily available that $[\mb]^{\sigma\tau} = ([\mb]^{-1})^{\tau} = ([\mb]^{\tau})^{-1} = [\mb]^{-1}$ or $[\mb]^{-1}\cdot[\ma]$. 
\end{rmk}

Since $K_n/k_n$ is a cyclic unramified extension of prime degree, by Hilbert's Theorem 94, there exists a nontrivial ideal class in $\mathcal{C}l_{k_n}$ that capitulates in $K_n$. Specifically, if $F/K$ is an unramified cyclic extension with lifting map $j$, then $\#Ker(j) = [F:K][ E(K): N_{F/K}(E(F))]$ (cf. \cite[Section 1.8]{lemmermeyer}). A method of identifying the Taussky condition for a cyclic unramified extension can be found in \cite[Corollary 1.8.3]{lemmermeyer}. If $F/K$ is an unramified cyclic extension of degree $d$ with ${\rm{Gal}}(F/K) = \langle \psi \rangle$, then for $\nu:= 1 + \psi + \cdots + \psi^{d-1}$, we consider the subgroups $C_{\nu}:= \{ [x] \in \mathcal{C}l_F: [x]^{\nu} = id \}$ and $C^{1-\psi}:= \{ [x]^{1-\psi} : [x] \in \mathcal{C}l_F\}$ of $\mathcal{C}l_F$. In this setting, it was proved that $[C_{\nu}: C^{1-\psi}] = \#\left( Ker(j)\cap N_{F/K}(\mathcal{C}l_F)\right)$. 

\smallskip

\noindent If $F/K$ is an unramified quadratic extension, then $\#Ker(j) = 2 \cdot [ E(K): N_{F/K}(E(F))]$.
The second factor $[ E(K): N_{F/K}(E(F))]$ can be seen in the second part of the genus formula in Theorem \ref{genusfor}. Thus, $\#Ker(j)$ will be a nontrivial power of 2, say $2^r$. As a result, $Ker(j)$ and $Ker(j)\cap N_{F/K}(\mathcal{C}l_F)$ must be contained in $A(K)$. Hence, it is sufficient to consider $Ker(j)\cap N_{F/K}(A(F))$ for extensions of degrees of powers of 2. Furthermore, there exists $s \geq 0$ such that $[C_{\nu}: C^{1-\psi}] = \left( \#Ker(j)\cap N_{F/K}(\mathcal{C}l_F)\right) = 2^s$. This implies that for every $[x] \in C_{\nu}$, there exists $[y] \in \mathcal{C}l_F$ such that $[x]^{2^s} = [y]^{1 - \psi}$. If $[x]$ belongs to the odd part of $\mathcal{C}l_F$ (that is, $[x] \not\in A(F)$), then there exists an odd $t \geq 0$ such that $[x]^t = id$. As $t$ and $2^s$ are relatively prime, there exist integers $x_0$ and $y_0$ such that $x_02^s + y_0t = 1$. Therefore, $[x]= [x]^{x_02^s + y_0t} = [x]^{x_02^s} = ([y]^{1 - \psi})^{x_0} =  ([y]^{x_0})^{1 - \psi}$, means that $[x]$ belongs to $C^{1-\psi}$. This demonstrates that the odd part of $C_{\nu}$ is contained in $C^{1-\psi}$. The reverse containment is clear as $C^{1-\psi} \subseteq C_{\nu}$. Therefore, the odd part of the subgroups $C_{\nu}$ and $C^{1-\psi}$ are equal. Hence, it is enough to consider the even part of the class group to evaluate $[C_{\nu}: C^{1-\psi}]$, that is, $[C_{\nu}: C^{1-\psi}] = [A(F)_{\nu}: A(F)^{1-\psi}] = \left(\#Ker(j)\cap N_{F/K}(A(F))\right)$. We now prove the following result, which will help us realize the action of $\tau$ on $A(K_n)$ from the possibilities.

\begin{lemma}\label{tau as 1}
For $n \geq 1$, ${\rm{Gal}}(K_n/\Q_n(\sqrt{p})) = \langle \tau \rangle$, $\tau$ acts as identity on $A(K_n)$. Hence, $\sigma\tau$ acts as the inverse on $A(K_n)$.    
\end{lemma}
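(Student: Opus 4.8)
The plan is to reduce the lemma to the assertion that $K_n/k_n$ satisfies Taussky's Condition (A), and to prove the latter by classifying $\tG_n:={\rm{Gal}}(\tL(k_n)/k_n)$ and invoking Kisilevsky's theorem. First I would dispose of the small cases. If $\#A(K_n)=2$, then $\tau$ acts as inversion by Proposition \ref{tau as -1}, and inversion on a group of order $2$ is the identity. If $\#A(K_n)=4$, write $A(K_n)=\langle[\mb]\rangle$; the discussion preceding the lemma leaves only $[\mb]^{\tau}=[\mb]$ or $[\mb]^{\tau}=[\mb]^{-1}$, and the latter makes $\tau$ act as inversion on a group of order $4$, contradicting Proposition \ref{tau as -1}. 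So from now on assume $\#A(K_n)=2^{d}$ with $d\geq 3$, in which case the same discussion leaves exactly two possibilities: either $\tau$ fixes $A(K_n)$ pointwise, or $[\mb]^{\tau}=[\mb]^{1+2^{d-1}}$.

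Next I would apply the identity $[A(K_n)_{\nu}:A(K_n)^{1-\sigma\tau}]=\#\big(Ker(j)\cap N_{K_n/k_n}(A(K_n))\big)$ recalled above to the unramified cyclic extension $K_n/k_n$, with $\langle\sigma\tau\rangle={\rm{Gal}}(K_n/k_n)$, $\nu=1+\sigma\tau$ and $j\colon A(k_n)\to A(K_n)$. Using that $\sigma$ acts as inversion on the cyclic group $A(K_n)$: if $\tau$ is trivial then $\sigma\tau=-1$, so $A(K_n)_{\nu}=A(K_n)$ and $A(K_n)^{1-\sigma\tau}=2A(K_n)$, giving index $2$, i.e.\ Condition (A); if instead $[\mb]^{\tau}=[\mb]^{1+2^{d-1}}$, then $\sigma\tau$ is multiplication by $-1+2^{d-1}$, and a short computation using $d\geq 3$ shows that both $A(K_n)_{\nu}$ and $A(K_n)^{1-\sigma\tau}$ equal $2A(K_n)$, so the index is $1$, i.e.\ Condition (B). Thus it suffices to prove that $K_n/k_n$ satisfies Condition (A).

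To obtain Condition (A) I would analyse $\tG_n$, using $\tL(k_n)=\tL(K_n)$ (as in the proof of Theorem \ref{Thm1}). Since $A(K_n)$ is cyclic, Burnside's basis theorem gives $\tL(K_n)=L(K_n)$, so ${\rm{Gal}}(\tL(K_n)/K_n)=A(K_n)$ is cyclic of order $2^{d}$; hence $|\tG_n|=2^{d+1}$, $\tG_n/\tG_n'\cong A(k_n)\cong(2,2)$, and $\tG_n$ is nonabelian, so by Theorem \ref{gorenstein result} we have $\tG_n\cong D_{2^{d+1}}$, $Q_{2^{d+1}}$, or $S_{2^{d+1}}$, with ${\rm{Gal}}(\tL(K_n)/K_n)$ the cyclic subgroup of index $2$. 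The decisive step — and the one I expect to be the main obstacle — is to exclude $S_{2^{d+1}}$. For this I would use that the fields $\mathfrak{K}_1,\mathfrak{K}_2$ of the proof of Theorem \ref{Thm1} (two of the three quadratic sub-extensions $\mathfrak{K}_1,\mathfrak{K}_2,\mathfrak{K}_3=K_n$ of $L(k_n)/k_n$) are isomorphic as fields, being $\Q$-conjugate; hence their maximal unramified $2$-extensions, which both equal $\tL(k_n)$, have isomorphic Galois groups over them, i.e.\ ${\rm{Gal}}(\tL(k_n)/\mathfrak{K}_1)\cong{\rm{Gal}}(\tL(k_n)/\mathfrak{K}_2)$ as abstract groups. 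Since $A(\mathfrak{K}_3)=A(K_n)$ is cyclic of order $2^{d}\geq 8$, $\mathfrak{K}_3$ corresponds to the cyclic index-$2$ subgroup of $\tG_n$, so $\mathfrak{K}_1,\mathfrak{K}_2$ correspond to the two noncyclic index-$2$ subgroups; but in $S_{2^{d+1}}$ (with $d\geq 3$) these are $D_{2^{d}}$ and $Q_{2^{d}}$, which are not isomorphic — a contradiction. Therefore $\tG_n\cong D_{2^{d+1}}$ or $Q_{2^{d+1}}$.

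Finally I would apply Kisilevsky's Theorem \ref{Kisilevsky 1} with $\mK=k_n$. As $A(k_n)\cong(2,2)$ the $2$-class field tower of $k_n$ terminates at level $1$ or $2$, and as $\tG_n$ is nonabelian it terminates at level $2$, so ${\rm{Gal}}(\mK^{(2)}/\mK)=\tG_n$ has order $2^{d+1}\geq 16$; hence we are neither in Kisilevsky's case (1) ($\mK^{(2)}=\mK^{(1)}$) nor in case (2) (there ${\rm{Gal}}(\mK^{(2)}/\mK)\cong Q_8$), so we are in case (3), with $\mathfrak{K}_1,\mathfrak{K}_2,\mathfrak{K}_3=K_n$ as the three quadratic sub-extensions of $\mK^{(1)}/\mK$ and at least two of them satisfying Condition (B). Since $\mathfrak{K}_1\cong\mathfrak{K}_2$ satisfy the same Taussky condition, they are the two satisfying (B); if $K_n=\mathfrak{K}_3$ also satisfied (B) we would be in case (3)(a), forcing $\tG_n\cong S_{2^{d+1}}$, which was excluded. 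Hence $K_n/k_n$ satisfies Condition (A), so by the second paragraph $\tau$ acts trivially on $A(K_n)$; and since $\sigma$ acts as inversion, $\sigma\tau$ acts as inversion, which is the final assertion.
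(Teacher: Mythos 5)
Your proof is correct, and its small cases ($\#A(K_n)\in\{2,4\}$) together with the Taussky-index computation for $K_n/k_n$ coincide with the paper's; the difference lies in how the decisive case $\#A(K_n)=2^d\geq 8$ is closed. The paper argues by contradiction: the bad action forces Condition (B) for $K_n/k_n$, Kisilevsky's case 3(a) then makes ${\rm{Gal}}(\tL(k_n)/k_n)$ semidihedral, and this possibility is eliminated only by passing to the infinite level, via Theorem \ref{mouhib-mova 2008} and Mizusawa's Theorem \ref{never sd}. You instead exclude the semidihedral shape already at level $n$: since $A(K_n)$ is cyclic, ${\rm{Gal}}(\tL(k_n)/K_n)$ is the cyclic maximal subgroup, while the two remaining subextensions $k_n(\sqrt{p_1})$ and $k_n(\sqrt{p_2})$ are $\Q$-conjugate (the paper's argument at $n=2$ extends verbatim to all $n$), hence have isomorphic Galois groups over them; in $S_{2^{d+1}}$ the two noncyclic maximal subgroups are $D_{2^{d}}$ and $Q_{2^{d}}$, which are non-isomorphic, so the semidihedral case cannot occur, and running Theorem \ref{Kisilevsky 1} forwards (with the conjugacy again guaranteeing that the two conjugate fields are the ones satisfying (B), since conjugation over $\Q$ preserves the Taussky condition and the capitulation kernel) yields Condition (A) for $K_n/k_n$, hence triviality of $\tau$. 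Your route buys independence from Theorems \ref{mouhib-mova 2008} and \ref{never sd} (no passage to $k_{\infty}$) and, as a by-product, pins down ${\rm{Gal}}(\tL(k_n)/k_n)$ as dihedral or generalized quaternion; the price is the extra bookkeeping about maximal subgroups of $S_{2^{d+1}}$ and about the labelling in Kisilevsky's theorem, both of which you handle correctly, whereas the paper's argument is shorter given the machinery it already cites.
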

\begin{proof}
Let $A(K_n) =\langle [\mb] \rangle$. By Proposition \ref{tau as -1}, if $\#A(K_n) = 2$, then $\tau$ acts as inverse. However, the order of $[\mb]$ is equal to 2 implies $[\mb] = [\mb]^{-1} = [\mb]^{\tau}$.

\noindent If $\#A(K_n) = 4$, then $[\mb]^{\tau} = [\mb]\cdot[\ma]$ is the same as $[\mb]^{\tau} = [\mb]^{-1}$, and $\#A(K_n)^{\tau + 1} = 1$. This is a contradiction to part 1 of Lemma \ref{bound of A(K_n)}, and thus, $[\mb]^{\tau} = [\mb]$. 

\noindent If $\#A(K_n) \geq 8$, then, ${\rm{Gal}}(L(K_n)/k_n)$ is a nonabelian group of order 16. Hence, it is obvious that ${\rm{Gal}}(L(K_n)/k_n) = {\rm{Gal}}(\tL(k_n)/k_n) \neq Q_8, \ (2,2)$. This rejects Cases 1 and 2 of Theorem \ref{Kisilevsky 1}. Suppose $[\mb]^{\tau} = [\mb]\cdot[\ma]$. Then by Remark \ref{action of sigmatau}, $[\mb]^{\sigma\tau} = [\mb]^{-1}\cdot[\ma]$. We consider the unramified quadratic extension $K_n/k_n$ and define $\nu = 1 + \sigma\tau$. For $[x] \in A(K_n)$, there exists an $s \geq 0$, such that $[x] = [\mb]^s$, and $[x]^{\sigma\tau} = ([\mb]^{s})^{\sigma\tau} = [\mb]^{-s}\cdot[\ma]^{s}$. Thus, $[x]^{\sigma\tau} = [x]^{-1}$ if $s$ is even and $[x]^{\sigma\tau} = [x]^{-1}\cdot[\ma]$ if $s$ is odd. Consequently, every even power of $[\mb]$ will belong to $A(K_n)_{\nu} := \{ [x] \in A(K_n): [x]^{\nu} = id \}$, and conversely. Therefore, $A(K_n)_{\nu} = A(K_n)^2 = A(K_n)^{\tau + 1}$. 

\noindent Next, $\sigma$ acts as the inverse implies $A(K_n)^{1 - \sigma\tau} = A(K_n)^{\tau + 1}$. Thus, we obtain $[A(K_n)_{\nu}: A(K_n)^{1-\sigma\tau}] = \#(Ker(j_3)\cap N_{K_n/k_n}(A(K_n))) = 1$, where $j_3$ is the lifting map from $A(k_n)$ to $A(K_n)$. This means that if $[\mb]^{\tau} = [\mb]\cdot[\ma]$, then $K_n/k_n$ satisfies Taussky Condition (B). Appealing to Theorem \ref{Kisilevsky 1}, we find that Case 3 (a) holds, and ${\rm{Gal}}(\tL(k_n)/k_n)$ is a semi-dihedral group of order at least 16. For this $n$, and the cyclotomic $\Z_2$-extension of the field $k_n$, we have $(k_n)_{\infty} = k_{\infty}$ and $X((k_n)_{\infty}) = X(k_{\infty}) \cong (2,2)$. Since ${\rm{Gal}}(\tL(k_n)/k_n)$ is semidihedral, from Theorem \ref{mouhib-mova 2008}, ${\rm{Gal}}(\tL((k_n)_{\infty})/(k_n)_{\infty}) = {\rm{Gal}}(\tL(k_{\infty})/k_{\infty})$ must be semidihedral. But this contradicts Theorem \ref{never sd}. Therefore, if $\#A(K_n) \geq 8$, $[\mb]^{\tau} = [\mb]$.
\end{proof}

\subsection*{Proof of Theorem \ref{Thm2}}
\begin{proof}
If $n =0$, then from Lemma \ref{rank A(K1)} and the paragraph before Proposition \ref{A(Q_n(p)}, $\#A(K_0) = \#A(\Q_0(\sqrt{p}))$. For any $n$, ${\rm{Gal}}(L(K_n)/K_n) \cong A(K_n)$ is an abelian extension, and ${\rm{Gal}}(K_n/\Q_n(\sqrt{p})) = \langle \tau \rangle$ is of degree 2. Since $K_n/\Q_n(\sqrt{p})$ is quadratic and $L(K_n)$ is the 2-Hilbert class field of $K_n$, $L(K_n)/\Q_n(\sqrt{p})$ is a Galois extension. Since $\tau$ acts as 1 on ${\rm{Gal}}(L(K_n)/K_n)$ from Lemma \ref{tau as 1}, $L(K_n)/\Q_n(\sqrt{p})$ is a finite abelian extension.

\noindent We now refer to Theorem \ref{GrasCap} for the ramified extension $K_n/\Q_n(\sqrt{p})$ for $n \geq 1$. We note that according to Theorem \ref{GrasCap}, for the extension $K_n/\Q_n(\sqrt{p})$, we have $\ell = 2$, $N = 1$, $\psi = \tau$, and $m(K_n) = 1$ (from Lemma \ref{tau as 1}). For these values, $s = 0$, $m = 1$, and $e = 1$. Therefore, if we restate Theorem \ref{GrasCap} for this extension, we obtain that $[x] \in A(\Q_n(\sqrt{p}))$ capitulates in $A(K_n)$ if there exists $[y] \in A(K_n)$ of order 2 such that $N_{K_n/\Q_n(\sqrt{p})}([y]) = [x]$. 

\noindent Suppose $\#A(K_n) = \#A(\Q_n(\sqrt{p}))$. Then, since $N_{K_n/\Q_n(\sqrt{p})}$ is surjective, it must be an isomorphism. As $n \geq 1$, $A(\Q_n(\sqrt{p}))$ contains an element of order 2, say $[x]$. Let $[y] \in A(K_n)$ be the pre-image of $[x]$ under $N_{K_n/\Q_n(\sqrt{p})}$. Then, order of $[y]$ must be equal to 2. From the previous paragraph, it is clear that $[x]$ capitulates in $A(K_n)$.

\noindent Conversely, suppose there is capitulation of some non-trivial class of $A(\Q_n(\sqrt{p}))$ in $A(K_n)$. Then, the kernel of the lifting map $j$ must be non-trivial with some $[x] \in A(\Q_n(\sqrt{p}))$ of order 2 that capitulates in $A(K_n)$. Let $[y] \in A(K_n)$ be a class such that $N_{K_n/\Q_n(\sqrt{p})}([y]) = [x]$. From Section 1 of \cite{gras3}, we observe that $(j \circ N_{K_n/\Q_n(\sqrt{p})} )[y] = [y]^{1 + \tau}$. This implies, $j[x] = id = [y]^2$ due to Lemma \ref{tau as 1}. Therefore, the order of $[y]$ is at most 2. But, as $[x]$ is non-trivial, order of $[y]$ must be precisely equal to 2. This further means that the unique element of order 2 of $A(K_n)$ (as $A(K_n)$  is cyclic) maps to a non-trivial element of $A(\Q_n(\sqrt{p}))$. From part 2 of Lemma \ref{bound of A(K_n)}, it is immediate that the kernel of norm map can have at most 2 elements. Thus, in this case, the kernel of norm map is trivial. From this, we realize that the norm map $N_{K_n/\Q_n(\sqrt{p})}$ is an isomorphism from $A(K_n)$ to $A(\Q_n(\sqrt{p}))$, and hence, the order of these two groups must be equal.    
\end{proof}

\subsection*{Proof of Corollary \ref{Cor3}}

\begin{proof}
The field $L(K_n)$ is unramified over $K_n$, and $K_n$ is ramified over $\Q_n(\sqrt{p})$ at primes above 2 and $r$. Hence, $L(K_n)$ is also ramified over $\Q_n(\sqrt{p})$ at primes above 2 and $r$, each with ramification index 2.  Therefore, $L(K_n)$ must be contained in the maximal abelian extension of $\Q_n(\sqrt{p})$ unramified outside primes above 2 and $r$. The field $L(k_n) = K_n(\sqrt{p_1}) = \Q_n(\sqrt{p}, \sqrt{r}, \sqrt{p_1})$ is a biquadratic extension of $\Q_n(\sqrt{p})$. Now, ${\rm{Gal}}(L(K_n)/\Q_n(\sqrt{p}))$ contains a biquadratic quotient namely, ${\rm{Gal}}(L(k_n)/\Q_n(\sqrt{p}))$. Thus we obtain that ${\rm{Gal}}(L(K_n)/\Q_n(\sqrt{p}))$ must have rank at least 2, and hence the result.     
\end{proof}

\section{Order of $A(K_1)$}
\noindent  We first identify the behaviour of the ideal $r\mathcal{O}_{\Q_1}$ in certain extensions of $\Q_1$ which will be essential in the proof of Theorem \ref{Thm3}.

\begin{lemma}\label{r is inert}
Let $r$ and $p$ be prime numbers satisfying Condition 1 and $p_1, p_2$ be totally positive prime elements in $\Q_1$ such that $p = p_1 \cdot p_2$ in $\Q_1$. Then, the following hold:
\begin{enumerate}
    \item If $r \equiv 3 \Mod{8}$, then the prime ideal $r\mathcal{O}_{\Q_1}$ is inert in the extension $\Q_1(\sqrt{p_i})/\Q_1$, for $i = 1,2$.
    \item If $r \equiv 7 \Mod{8}$ and $r\mathcal{O}_{\Q_1} = \langle r_1 \rangle \cdot \langle r_2 \rangle$ in $\Q_1$, then exactly one of $\langle r_1 \rangle$ and $\langle r_2 \rangle$ is inert in $\Q(\sqrt{p_1})/\Q_1$ (and similarly in $\Q(\sqrt{p_2})/\Q_1$). Also, if $\langle r_1 \rangle$ is inert in $\Q_1(\sqrt{p_1})$, then it must split in $\Q_1(\sqrt{p_2})$.
\end{enumerate}
\end{lemma}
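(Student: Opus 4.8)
The plan is to convert each statement about the decomposition of $r\mathcal{O}_{\Q_1}$ in $\Q_1(\sqrt{p_i})/\Q_1$ into a statement about quadratic residue symbols over $\Q_1=\Q(\sqrt2)$, and then to exploit that $\Q_1/\Q$ is Galois with group $\langle\gamma\rangle$ of order $2$ together with the relation $\gamma(p_1)=p_2$, which is \emph{forced}: since $p_1\cdot\gamma(p_1)=N_{\Q_1/\Q}(p_1)=p=p_1p_2$, we must have $\gamma(p_1)=p_2$. First I would record the elementary dictionary: a prime $\q$ of $\Q_1$ coprime to $2p$ is unramified in $\Q_1(\sqrt{p_i})/\Q_1$ (as $\q\nmid 2p_i$), and it is inert, resp.\ split, there according as $\left(\frac{p_i}{\q}\right)=-1$, resp.\ $+1$. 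I would also note that, among $r\equiv3\Mod4$, the congruence $r\equiv3\Mod8$ is exactly the case $\left(\frac2r\right)=-1$, hence $r$ inert in $\Q_1$, while $r\equiv7\Mod8$ is exactly the split case, so the two parts together exhaust all $r\equiv3\Mod4$.

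For part (1) ($r\equiv3\Mod8$), the ideal $r\mathcal{O}_{\Q_1}$ is prime with residue field $\mathbb{F}_{r^2}$, and the arithmetic Frobenius $x\mapsto x^{r}$ on $\mathbb{F}_{r^2}$ is the reduction of $\gamma$ (the decomposition group at an inert prime is all of $\mathrm{Gal}(\Q_1/\Q)$). Hence $p_1^{\,r}\equiv\gamma(p_1)=p_2\pmod{r\mathcal{O}_{\Q_1}}$, so $p_1^{\,r+1}\equiv p_1p_2=p\pmod{r\mathcal{O}_{\Q_1}}$. Since $r\equiv3\Mod4$, $(r-1)/2$ is an integer and $\frac{r^2-1}{2}=(r+1)\cdot\frac{r-1}{2}$, so
\[
\left(\frac{p_1}{r\mathcal{O}_{\Q_1}}\right)\equiv p_1^{(r^2-1)/2}=\bigl(p_1^{\,r+1}\bigr)^{(r-1)/2}\equiv p^{(r-1)/2}\equiv\left(\frac{p}{r}\right)=-1\pmod{r\mathcal{O}_{\Q_1}},
\]
the last equality being part of Condition (\ref{Cond}). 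Thus $r\mathcal{O}_{\Q_1}$ is inert in $\Q_1(\sqrt{p_1})/\Q_1$; applying $\gamma$ (which fixes $r\mathcal{O}_{\Q_1}$ and $\pm1$ and interchanges $p_1,p_2$) gives the same for $p_2$.

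For part (2) ($r\equiv7\Mod8$) we have $r\mathcal{O}_{\Q_1}=\langle r_1\rangle\langle r_2\rangle$ with residue degree $1$ at each factor, and since $\mathrm{Gal}(\Q_1/\Q)$ permutes $\{\langle r_1\rangle,\langle r_2\rangle\}$ transitively, $\gamma$ must swap them. Functoriality of the quadratic symbol under the ring automorphism $\gamma$, together with $\gamma$ fixing $\pm1$, gives $\left(\frac{p_1}{r_1}\right)=\left(\frac{p_2}{r_2}\right)=:\epsilon_1$ and $\left(\frac{p_1}{r_2}\right)=\left(\frac{p_2}{r_1}\right)=:\epsilon_2$; and multiplicativity in the numerator, with $p_1p_2=p\in\Z$ and residue field $\mathbb{F}_r$ at $\langle r_1\rangle$, gives $\epsilon_1\epsilon_2=\left(\frac{p_1p_2}{r_1}\right)=\left(\frac{p}{r}\right)=-1$. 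Hence exactly one of $\epsilon_1,\epsilon_2$ equals $-1$, i.e.\ exactly one of $\langle r_1\rangle,\langle r_2\rangle$ is inert in $\Q_1(\sqrt{p_1})/\Q_1$ (and likewise, via the same $\epsilon$'s, in $\Q_1(\sqrt{p_2})/\Q_1$). Finally, if $\langle r_1\rangle$ is inert in $\Q_1(\sqrt{p_1})/\Q_1$, i.e.\ $\epsilon_1=-1$, then $\left(\frac{p_2}{r_1}\right)=\epsilon_2=+1$, so $\langle r_1\rangle$ splits in $\Q_1(\sqrt{p_2})/\Q_1$, as claimed.

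The argument becomes essentially formal once the dictionary is in place; the only points needing care are bookkeeping ones — applying the residue-symbol/splitting criterion only at primes where it is valid (coprime to $2$ and $p$), and, in part (2), correctly tracking the $\mathrm{Gal}(\Q_1/\Q)$-action, in particular the forced identity $\gamma(p_1)=p_2$ and the fact that $\gamma$ interchanges $\langle r_1\rangle$ and $\langle r_2\rangle$. One could instead reach part (1) by flipping $\left(\frac{p_i}{r\mathcal{O}_{\Q_1}}\right)$ to $\left(\frac{r}{p_i}\right)=\left(\frac{r}{p}\right)=\left(\frac{p}{r}\right)=-1$ via Hecke's reciprocity law (Theorem \ref{Reciprocity}), using total positivity of $p_i$ to kill the archimedean signs, but that route also demands control of $p_i\bmod 4$, which the Frobenius computation above sidesteps.
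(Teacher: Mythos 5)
Your proof is correct, but it takes a genuinely different route from the paper's. The paper works entirely with Hecke's quadratic reciprocity in $\Q_1$ (Theorem \ref{Reciprocity}): for part (1) it first gets $\left(\frac{r}{p_i}\right)=-1$ from $\left(\frac{p}{r}\right)=-1$ via the residue-field isomorphism $\mathcal{O}_{\Q_1}/\langle p_i\rangle\cong\Z/p\Z$ and rational reciprocity, and then flips the symbol using total positivity of the $p_i$ \emph{and} Nishino's computation that $p_i\equiv 1$ or $(1+\sqrt2)^2 \Mod 4$ (needed for the hypothesis of Hecke's law); for part (2) it applies Hecke's law twice in the same way. You instead exploit the Galois action of $\gamma\in{\rm Gal}(\Q_1/\Q)$: in part (1) the Frobenius identity $p_1^{r}\equiv\gamma(p_1)=p_2$ at the inert prime $r\mathcal{O}_{\Q_1}$ reduces everything to Euler's criterion and $\left(\frac{p}{r}\right)=-1$, and in part (2) conjugation-invariance of the residue symbol plus multiplicativity ($\epsilon_1\epsilon_2=\left(\frac{p}{r_1}\right)=-1$) gives all claims at once — and, as a small bonus, your conclusion is phrased exactly as in the statement ($\langle r_1\rangle$ splits in $\Q_1(\sqrt{p_2})$), whereas the paper derives the $\gamma$-conjugate assertion ($\langle r_2\rangle$ splits in $\Q_1(\sqrt{p_1})$). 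What each buys: your argument is more elementary and self-contained, avoiding both Hecke's reciprocity and the mod-$4$ congruences of the $p_i$, exactly as you note at the end; the paper's route keeps the computation inside the symbol-reciprocity toolkit it needs elsewhere anyway (e.g.\ for $\left(\frac{p_1}{p_2}\right)=\left(\frac{p_2}{p_1}\right)$ before Lemma \ref{rank T2}). The only step you state a bit quickly is $N_{\Q_1/\Q}(p_1)=p$: strictly, $N(p_1)N(p_2)=p^2$ with both factors of absolute value $>1$ forces $|N(p_1)|=p$, and total positivity fixes the sign, whence $\gamma(p_1)=p_2$; this is a one-line justification, not a gap.
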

\begin{proof}
(1). Let $r \equiv 3 \Mod{8}$. Since $p \equiv 1 \Mod{4}$, from Condition 1, $\left( \dfrac{r}{p} \right) = -1$. This indicates that $r$ is not a square modulo $p$. Since $p \equiv 1 \Mod{8}$, the ideal $p\mathcal{O}_{\Q_1}$ can be expressed as $\langle p_1 \rangle \cdot\langle p_2 \rangle$, with $\mathcal{O}_{\Q_1}/\langle p_i \rangle \cong \Z/p\Z$ (for $i = 1,2$). Therefore, $r$ is not a square modulo $p_i$ in $\mathcal{O}_{\Q_1}$. Using the generalization of Legendre symbols in number fields, we obtain $\left( \dfrac{r}{p_i} \right) = -1$. The extension $\Q_1(\sqrt{p_i})/\Q_1$ is quadratic, with the minimal polynomial of $\sqrt{p_i}$ being $f(x) = x^2 - p_i$. Since $r \equiv 3 \Mod{8}$, the ideal $r\mathcal{O}_{\Q_1}$ is prime in $\Q_1$. Now, whether $r\mathcal{O}_{\Q_1}$ splits or remains inert in $\Q_1(\sqrt{p_i})$ depends on the factorization of $f$ modulo $r\mathcal{O}_{\Q_1}$ in $\mathcal{O}_{\Q_1}$. If $p_i$ is not a square modulo $r\mathcal{O}_{\Q_1}$, then $r\mathcal{O}_{\Q_1}$ is inert in $\Q_1(\sqrt{p_i})/\Q_1$. Thus, we need to find the value of $\left( \dfrac{p_i}{r} \right)$.

\noindent We produce a proof for $p_1$, as a similar proof would hold for $p_2$ as well. Our aim is to use Hecke's quadratic reciprocity stated in Theorem \ref{Reciprocity}. Since $\Q_1/\Q$ is a real quadratic extension, $R =2$, $x = p_1, y = r$, $(x_1,y_1) = (p_1, r)$, and $(x_2, y_2) = (p_2,r)$. As $p_i$'s are totally positive and $r >0$, $s(x_1,y_1) = s(x_2, y_2) =0$. Also, in the proof of Theorem 2 stated in \cite{nishino}, it has been proved that modulo 4, $p_i$'s are congruent to 1, or $3 + 2\sqrt{2} = (1 + \sqrt{2})^2$.
Therefore, from Theorem \ref{Reciprocity}, we have $\left( \dfrac{p_1}{r} \right) = (-1)^0 \cdot (-1)^{0} \cdot (-1) = -1$, and $p_1$ is not a square modulo $r\mathcal{O}_{\Q_1}$ in $\Q_1$. Hence, $r\mathcal{O}_{\Q_1}$ is inert in $\Q_1(\sqrt{p_1})/\Q_1$.

\smallskip

\noindent (2). Let $r \equiv 7 \Mod{8}$. Then, there exits $r_1, r_2 \in \Q_1$ such that $r\mathcal{O}_{\Q_1}$ can be factorized as $r\mathcal{O}_{\Q_1} = \langle r_1 \rangle \cdot \langle r_2 \rangle$. In this case, $\Z/r\Z \cong \mathcal{O}_{\Q_1}/\langle r_i \rangle$ for $i = 1,2$ and $\left( \dfrac{p}{r} \right) = \left( \dfrac{p}{r_i} \right) = \left( \dfrac{p_1p_2}{r_i} \right) = -1$. Hence, for each $i$, exactly one of $\left( \dfrac{p_1}{r_i} \right)$ and $\left(\dfrac{p_2}{r_i} \right)$ must be equal to $-1$, and the other symbol must be equal to 1. We furnish a proof for $i = 1$ as the other case can be dealt with similarly. Without loss of generality, let $\left( \dfrac{p_1}{r_1} \right) = -1$ (this implies that $\langle r_1 \rangle$ is inert in $\Q_1(\sqrt{p_1})/\Q_1$). The conjugates of $r_1$ are $r_1$ and $r_2$, and the conjugates of $p_1$ are $p_1, p_2$. Applying Theorem \ref{Reciprocity} on $p_1$ and $r_1$, we obtain that $\left( \dfrac{r_1}{p_1} \right) = -1$. Further, $\left( \dfrac{r}{p} \right) = \left( \dfrac{r_1r_2}{p_1} \right) = -1$, $\left( \dfrac{r_1}{p_1} \right) = -1$, and Theorem \ref{Reciprocity} all together imply that $\left( \dfrac{r_2}{p_1} \right) = \left( \dfrac{p_1}{r_2} \right) = 1$. This means that if $\langle r_1 \rangle$ is inert in $\Q_1(\sqrt{p_1})$, then $\langle r_2 \rangle$ must be a split prime in $\Q_1(\sqrt{p_1})$. Also, $\left( \dfrac{p}{r} \right) = -1$ implies that definitely, one of the ideals $\langle r_1 \rangle$ and $\langle r_2 \rangle$ must be inert in $\Q_1(\sqrt{p_1})$.
\end{proof}

The field $\Q_1( \sqrt{p}, \sqrt{p_1})$ is the 2-Hilbert class field of $\Q_1(\sqrt{p})$, with subfields $\Q_1(\sqrt{p_1})$ and $\Q_1(\sqrt{p_2})$ above $\Q_1$. As $p_1$ and $p_2$ are totally positive in $\Q_1$, from Theorem \ref{Reciprocity}, $\left( \dfrac{p_1}{p_2} \right) = \left( \dfrac{p_2}{p_1} \right)$. Therefore, the prime ideal $\langle p_1 \rangle$ remains inert in $\Q_1(\sqrt{p_2})/\Q_1$ if and only if $\langle p_2 \rangle$ remains inert in $\Q_1(\sqrt{p_1})/\Q_1$. The next lemma is a consequence of this assumption.

\smallskip

\begin{lemma}\label{rank T2}
Let $r \equiv 3 \Mod{8}$, and suppose that the prime ideal $\langle p_1 \rangle$ of $\Q_1$ is inert in $\Q_1(\sqrt{p_2})/\Q_1$. If $T_2:= \Q_1(\sqrt{rp_1}, \sqrt{p_2})$, then ${\rm{rank}}_2(A(T_2)) \leq 1$.    
\end{lemma}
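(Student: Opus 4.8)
I would run the ambiguous class number formula (Theorem~\ref{genusfor}) on a carefully chosen quadratic subextension of $T_2$. Note that $T_2=\Q_1(\sqrt{rp_1},\sqrt{p_2})$ is biquadratic over $\Q_1$ with the three quadratic subfields $\Q_1(\sqrt{p_2})$, $\Q_1(\sqrt{rp_1})$, and $\Q_1(\sqrt{rp_1p_2})=\Q_1(\sqrt{rp})=k_1$; in particular $T_2\supset k_1$, for which $A(k_1)\cong(2,2)$ by Theorem~\ref{f-k}. The first step is to show $M:=\Q_1(\sqrt{p_2})$ has trivial $2$-class group: the extension $M/\Q_1$ is ramified only at the degree-one prime $\langle p_2\rangle$ among the finite primes (the prime above $2$ is unramified, exactly as in the proof of Proposition~\ref{A(Q_n(p)}, and no infinite place ramifies since $p_2$ is totally positive), so with $t=1$ the genus formula over $\Q_1$ gives $\#A(M)^{{\rm Gal}(M/\Q_1)}\le\#A(\Q_1)=1$, and since a nontrivial finite $2$-group always has a nontrivial fixed subgroup under an action of a $2$-group, $A(M)=\{id\}$.

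Because $A(M)$ is trivial, the lifting map $A(M)\to A(T_2)$ has trivial image, so Proposition~\ref{rmk to genus} applies to $T_2/M$ and gives $2^{\,{\rm rank}_2A(T_2)}=\#A(T_2)^{{\rm Gal}(T_2/M)}$; the genus formula for $T_2=M(\sqrt{rp_1})$ then yields
$$2^{\,{\rm rank}_2A(T_2)}=\frac{2^{\,t-1}}{\big[E(M):E(M)\cap N_{T_2/M}(T_2^{\times})\big]},$$
where $t$ is the number of places of $M$ ramified in $T_2$. Next I would pin down $t$. The hypothesis that $\langle p_1\rangle$ is inert in $M/\Q_1$ makes $\langle p_1\rangle$ the unique prime of $M$ above $p_1$, and it ramifies in $T_2/M$ since $v_{\langle p_1\rangle}(rp_1)=1$; by Lemma~\ref{r is inert}(1), which is where $r\equiv3\Mod 8$ is used, $r\mathcal{O}_{\Q_1}$ is inert in $M/\Q_1$, so there is a unique prime of $M$ above $r$, again ramified in $T_2/M$. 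For the prime(s) above $2$: the behaviour of $\langle\sqrt2\rangle$ in the three quadratic subfields (it is unramified in $M/\Q_1$ and in $k_1/\Q_1$) together with multiplicativity of ramification indices in $T_2/M\supset k_1\supset\Q_1$ forces each prime of $M$ above $2$ either all to ramify or all to be unramified in $T_2/M$; writing $s\in\{0,1,2\}$ for the number of ramifying ones, we obtain $t=2+s$ and hence ${\rm rank}_2A(T_2)=(s+1)-v$, with $v:=v_2\big([E(M):E(M)\cap N_{T_2/M}(T_2^{\times})]\big)$. If $s=0$ the lemma is immediate; in general it reduces to proving $v\ge s$.

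The unit-index estimate $v\ge s$ is the heart of the matter and the step I expect to be the main obstacle. By Hasse's norm theorem for the cyclic extension $T_2/M$ together with the product formula for the norm-residue symbols, $[E(M):E(M)\cap N_{T_2/M}(T_2^{\times})]$ is the order of the image of the homomorphism $E(M)\to\prod_{\mathfrak q}\{\pm1\}$, $u\mapsto(\langle u,T_2/M\rangle_{\mathfrak q})_{\mathfrak q}$ over the ramified places $\mathfrak q$; this image lies in the codimension-one subgroup $\prod_{\mathfrak q}\varepsilon_{\mathfrak q}=1$, and one must show it is the whole of that subgroup, i.e. exhibit $t-1=s+1$ units realizing independent local obstructions. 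At the odd ramified primes $\mathfrak q\mid p_1r$ the completion has odd residue characteristic, so a unit is a local norm precisely when it is a square there, a condition I would read off from a quadratic residue symbol evaluated via Hecke's reciprocity law (Theorem~\ref{Reciprocity}) using the congruences $p\equiv9\Mod{16}$, $p_i\equiv1$ or $3+2\sqrt2\Mod4$, and $r\equiv3\Mod8$; for instance the unit $1+\sqrt2\in\Q_1\subset M$ reduces to a square in the (even-degree) residue fields at $\langle p_1\rangle$ and at $r\mathcal{O}_{\Q_1}$, so its obstruction is concentrated at the primes above $2$ and is then forced by the product formula. Deciding whether $\langle\sqrt2\rangle$ splits or is inert in $M/\Q_1$ (the value of $s$) and, when $s=2$, producing a second unit that separates the two primes above $2$, is the delicate $2$-adic part, handled through the explicit description $p=a^2-2b^2$, $p_2=a-b\sqrt2$. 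I would also note that the conclusion can alternatively be approached through the extension $T_2/k_1$, which is unramified since $T_2\subset L(k_1)=\Q_1(\sqrt p,\sqrt r,\sqrt{p_1})$ by Theorem~\ref{f-k}: here the genus formula (with $t=0$) gives $\#A(T_2)^{{\rm Gal}(T_2/k_1)}=2/[E(k_1):E(k_1)\cap N_{T_2/k_1}(T_2^{\times})]$, which (as $A(T_2)\neq\{id\}$) forces this index to be $1$ and the fixed group to have order $2$, and then ${\rm rank}_2A(T_2)\le1$ follows once one shows the lifting map $A(k_1)\to A(T_2)$ is trivial, equivalently $[E(k_1):N_{T_2/k_1}(E(T_2))]=2$; either route comes down to a unit index of exactly the kind just described.
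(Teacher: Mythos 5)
Your setup coincides with the paper's: both arguments work with $F=M=\Q_1(\sqrt{p_2})$, show $A(F)$ is trivial, and then read ${\rm rank}_2A(T_2)$ off the genus formula for $T_2/F$, so everything hinges on a lower bound for the unit index $\left[E(F):E(F)\cap N_{T_2/F}(T_2^{\times})\right]$. But that lower bound is precisely what you do not prove: you yourself flag it as ``the heart of the matter'' and ``the delicate $2$-adic part'', and the local-symbol plan you sketch is unlikely to close it. In particular, the unit you put forward, $1+\sqrt{2}\in E(\Q_1)\subset E(M)$, is (as you note) a local norm at the odd ramified primes, so when there is a single ramified prime above $2$ the product formula forces it to be a local norm there as well --- i.e.\ $1+\sqrt{2}$ itself carries \emph{no} obstruction and cannot certify that the index is $\geq 2$. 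You would still have to manufacture units of $E(M)$ that are genuinely not in $E(M)\cap N_{T_2/M}(T_2^{\times})$, and nothing in the proposal does this; also, the ramification at $2$ (your $s$) is left undetermined, whereas the paper pins it down (a unique prime of $F$ above $2$, ramified in $T_2$, so $t=3$) using the cited results.

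The paper's way around this is a short global argument you are missing. Since $A(F)=1$ and $F/\Q_1$ is ramified only at $\langle p_2\rangle$, the \emph{second} form of Chevalley's formula (Theorem~\ref{genusfor}) applied to $F/\Q_1$ gives $\left[E(\Q_1):N_{F/\Q_1}(E(F))\right]=1$, so there is a unit $u\in E(F)$ with $N_{F/\Q_1}(u)=1+\sqrt{2}$. If the index $\left[E(F):E(F)\cap N_{T_2/F}(T_2^{\times})\right]$ were $1$, then $u=N_{T_2/F}(\alpha)$ for some $\alpha\in T_2^{\times}$, whence $1+\sqrt{2}=N_{T_2/\Q_1}(\alpha)=N_{k_1/\Q_1}\bigl(N_{T_2/k_1}(\alpha)\bigr)$ because $T_2=k_1(\sqrt{p_2})$; but $1+\sqrt{2}$ is not a norm from $k_1=\Q_1(\sqrt{rp})$ since $rp$ has the prime factor $r\equiv 3\Mod 4$ (\cite[Proposition 3.3]{L-S_arx}). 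Hence the index is at least $2$, and with $t=3$ the genus formula yields ${\rm rank}_2A(T_2)\leq 1$. The key idea --- using norm-surjectivity on units for $F/\Q_1$ to lift $1+\sqrt{2}$ to a unit of $F$ and then transferring the non-norm statement through $k_1$ --- is absent from your proposal, so as it stands the proof is incomplete.
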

\begin{proof}
For simplicity, we shall use $F$ to denote the field $\Q_1(\sqrt{p_2})$ in this proof. Since $\#A(\Q_1(\sqrt{p})) = 2$, by Burnside's basis theorem, $\#A(\Q_1( \sqrt{p}, \sqrt{p_1})) = 1$. 
Now, $p_1\mathcal{O}_F$ is ramified in $\Q_1( \sqrt{p}, \sqrt{p_1})/F$. Hence, the norm map between class groups of these fields must be surjective and thus $\#A(F) = 1$. We note that the only prime ramified in $F/\Q_1$ is $\langle p_2 \rangle$. So,
if we apply the second form of the genus formula for this extension, then we obtain $1 = \#A(\Q_1) \times \dfrac{2^{1-1}}{\left[ E(\Q_1): N_{F/\Q_1}(E(F)) \right]}$. Therefore, every unit of $\Z[\sqrt{2}] = \mathcal{O}_{\Q_1}$ is the norm of some unit of $\mathcal{O}_{F}$. In particular, there exists $u \in \mathcal{O}_{F}$ such that its norm over $\Q_1$ is $1 + \sqrt{2}$.\\

\noindent It follows from our assumption, part 1 of Lemma \ref{r is inert}, and \cite[Proposition 4.5]{L-S_arx} that the primes of $F$ ramified in $T_2$ are $p_1\mathcal{O}_F$, $r\mathcal{O}_F$, and $\ell$, respectively, where $\ell$ is the unique prime ideal above 2. Consequently, from genus formula for $T_2/F$, $\ 2^{\ {\rm{rank}}_2A(T_2)} = \dfrac{ 2^{3-1}}{\left[E(F):E(F)\cap N_{T_2/F}(T_2^{\times})\right]} \leq 2^2$. Suppose ${\left[E(F):E(F)\cap N_{T_2/F}(T_2^{\times})\right]} = 1$. Then, there exists $\alpha \in T_2^{\times}$, such that its norm over $F$ is equal to $u$, where $u$ is defined in the previous paragraph. In that case, $N_{T_2/\Q_1}(\alpha) = N_{F/\Q_1}(u) = 1+\sqrt{2}$. Moreover, $N_{T_2/\Q_1}(\alpha) = N_{k_1/\Q_1}(N_{T_2/k_1}(\alpha)) = 1+\sqrt{2}$, as $T_2 = \Q_1(\sqrt{rp_1}, \sqrt{p_2}) = \Q(\sqrt{rp}, \sqrt{p_2}) = k_1(\sqrt{p_2})$. But since $r \equiv 3 \Mod{8}$, this is a contradiction to Proposition 3.3 of \cite{L-S_arx} which states that $1+\sqrt{2}$ is not a norm in the extension $\Q_1(\sqrt{d})/\Q_1$ if $d$ has a prime factor congruent to $3 \Mod{4}$. Thus, ${\left[E(F):E(F)\cap N_{T_2/F}(T_2^{\times})\right]} \geq 2$, and ${\rm{rank}}_2(A(T_2)) \leq 1$. 
\end{proof}

Let $a,b \in \Z_{\geq 0}$ be such that $p_1 = a + b\sqrt{2}$ and $p_2 = a - b\sqrt{2}$, so that $p = a^2 - 2b^2$. This means that $p_1 \equiv 2a \Mod{p_2}$. As $\mathcal{O}_{\Q_1}/\langle p_i \rangle \cong \Z/p\Z$, 2 is a square modulo $p$ implies that 2 is a square modulo $p_2$ in $\Q_1$. Therefore, $\left( \dfrac{p_1}{p_2} \right) = -1$ if and only if $\left( \dfrac{a}{p_2} \right) = -1$, which is equivalent to $\left( \dfrac{a}{p} \right) = -1$. Also, $\left( \dfrac{2}{p} \right)_4 = -1$ means that 2 is not a fourth power modulo $p$. Hence, the square root of 2 in modulo $p$ is not a square itself. Now, $a^2 \equiv 2b^2 \Mod{p}$ implies $a \equiv \pm tb \Mod{p}$, where $t^2 \equiv 2 \Mod{p}$. Since $-1$ is also a square modulo $p$, $\left( \dfrac{a}{p} \right) = -1$ is equivalent to $\left( \dfrac{b}{p} \right) = 1$. 

\subsection*{Proof of Theorem \ref{Thm3}}

\begin{proof}
 From the discussion in the previous paragraph, it is evident that if $p = a^2 - 2b^2$, then, $\left( \dfrac{a}{p} \right) = -1$ if and only if the prime ideal $\langle p_1 \rangle$ of $\Q_1$ is inert in $\Q_1(\sqrt{p_2})/\Q_1$. From Lemma \ref{bound of A(K_n)}, it is direct that $2 \leq \#A(K_1) \leq 4$. We suppose that $\#A(K_1) = 4$. From Theorem \ref{Thm2}, $L(K_1)$ is an abelian extension of $\Q_1(\sqrt{p})$ of degree 8. We observe that $\Q_1(\sqrt{p}) \subset K_1 \subset L(K_1)$, where $L(K_1)/K_1$ is cyclic of degree 4. On the other hand, $\Q_1(\sqrt{p},\sqrt{r}, \sqrt{p_1})/\Q_1(\sqrt{p}) \cong (2,2)$ is a subextension of $L(K_1)/\Q_1(\sqrt{p})$. Putting these together, we infer that $L(K_1)/\Q_1(\sqrt{p}) \cong (2,4)$.

\noindent A consequence of $\#A(K_1) = 4$ is that $L(k_1) \subsetneq L(K_1)$. We have the field extension $k_1 \subset K_1 \subset L(k_1) \subset L(K_1) \subset L(L(k_1)$, and by Burnside's basis theorem, $L(L(k_1)) \supset L(K_1) = \tL(K_1) = \tL(k_1) \supset L(L(k_1))$ $L(K_1)$. Therefore, $L(K_1)$ can also be viewed as the 2-Hilbert class field of $L(k_1)$. We now refer to the extension $\Q_1(\sqrt{p}) \subset \Q_1(\sqrt{p}, \sqrt{p_1}) \subset L(k_1) \subset L(K_1)$. Since $L(K_1)$ is the 2-Hilbert class field of $L(k_1)$, it must be a Galois extension of degree 4 over $\Q_1(\sqrt{p_1}, \sqrt{p})$. Then, $L(k_1)/\Q_1(\sqrt{p_1}, \sqrt{p})$ is a subextension of $L(K_1)/\Q_1(\sqrt{p}, \sqrt{p_1})$ ramified at primes above 2 and $r$, and $L(K_1)/L(k_1)$ is unramified. Summing these reasons up, by class field theory, $L(K_1)/\Q_1(\sqrt{p}, \sqrt{p_1})$ cannot be a cyclic extension of degree 4. Hence, $L(K_1)/\Q_1(\sqrt{p_1}, \sqrt{p})$ has to be an extension of type $(2,2)$. Thus, there exist fields $F_1, F_2$ different from $L(k_1)$ such that $\Q_1(\sqrt{p_1}, \sqrt{p}) \subset F_1, F_2 \subset L(K_1)$. Likewise, as $L(k_1)/\Q_1(\sqrt{r}, \sqrt{p_1})$ is a quadratic extension ramified at the prime(s) above $p_2$, $L(K_1)/ \Q_1(\sqrt{r}, \sqrt{p_1})$ is of the form $(2,2)$. Therefore, there exist intermediate fields $H_1$ and $H_2$ distinct from $L(k_1)$ such that $ \Q_1(\sqrt{r}, \sqrt{p_1})\subset H_1, H_2 \subset L(K_1)$. Throughout this proof, the fields $F_1, F_2, H_1$, and $H_2$ will be fixed.  

\noindent Our assumption $\#A(K_1) = 4$ also yields that $L(K_1)/k_1$ is an unramified, non-abelian extension of degree 8. As a result, $L(K_1)/\Q_1$ is a Galois, non-abelian extension of degree 16. From the previous paragraphs, we infer that ${\rm{Gal}}(L(K_1)/\Q_1)$ has at least two subgroups of type $(2,2)$, (namely, ${\rm{Gal}}(L(K_1)/\Q_1(\sqrt{p}, \sqrt{p_1}))$ and ${\rm{Gal}}(L(K_1)/\Q_1(\sqrt{r}, \sqrt{p_1}))$), 
a subgroup of type $(2,4)$, (corresponding to ${\rm{Gal}}(L(K_1)/\Q_1(\sqrt{p}))$, 
and a quotient of the form $(2,2,2)$ (with respect to the subextension $L(k_1) = \Q_1(\sqrt{p}, \sqrt{r}, \sqrt{p_1})/\Q_1$). Moreover, we have at least 7 subextensions of degree 2 over $\Q_1$ contained in $L(K_1)$. These are $\Q_1(\sqrt{p})$, $\Q_1(\sqrt{r})$, $k_1= \Q_1(\sqrt{pr})$, $\Q_1(\sqrt{p_1})$, $\Q_1(\sqrt{p_2})$, $\Q_1(\sqrt{rp_1})$, and $\Q_1(\sqrt{rp_2})$. Therefore, ${\rm{Gal}}(L(K_1)/\Q_1)$ has at least 7 quotients of order 2. None of the groups mentioned in Remark \ref{groups 16} can be isomorphic to ${\rm{Gal}}(L(K_1)/\Q_1)$ as their properties stated therein do not match with our current observations. 
The groups of order 16 that are remaining to be pondered upon are $Q_8 \oplus \Z/2\Z, D_8 \curlyvee \Z/4\Z$ (the central product of $D_8$ and $\Z/4\Z$), and $D_8 \oplus \Z/2\Z$. Let $\mG = {\rm{Gal}}(L(K_1))/\Q_1$. We will deal with the possibility of $\mG$ being equal to each of the aforementioned groups. 

\noindent{\bf Case 1}. $\mG = Q_8 \oplus \Z/2\Z$: In this case, $\mG$ has exactly three subgroups of order 2, all of which are normal. Thus, by Galois correspondence, there exist fields $M_i$ ($i =1,2,3$) such that for each $i$, $\Q_1 \subset M_i \subset L(K_1)$, $[L(K_1):M_i] = 2$, and $M_i/\Q_1$ is Galois. One of these is $L(k_1)$, and we fix $M_3 = L(k_1)$. We already have five fields, namely $F_1, F_2, H_1, H_2$, and $L(k_1)$ above which $L(K_1)$ is quadratic. Since $\mG$ has only three subgroups of order 2, some these fields must be equal. Clearly, $F_1 \neq F_2 \neq L(k_1)$, and $H_1 \neq H_2 \neq L(k_1)$. Therefore, without loss of generality, $M_1 = F_1 = H_1$ and  $M_2 = F_2 = H_2$. By definition, $\Q_1(\sqrt{p_1}, \sqrt{p}) \subset F_1$ and $\Q_1(\sqrt{r}, \sqrt{p_1})\subset H_1$. That way, $M_1$ must contain $\Q_1(\sqrt{p_1}, \sqrt{p}, \sqrt{r}) = M_3$ which is not possible. A similar contradiction occurs with $M_2$ as well. Therefore, $\mG \neq Q_8 \oplus \Z/2\Z$.  

\noindent{\bf Case 2}. $\mG =  D_8 \curlyvee \Z/4\Z$: The group $\mG$ has only one normal subgroup of order 2. Since $L(k_1)/\Q_1$ is Galois, we note that $F_1$ and $F_2$ cannot be Galois over $\Q_1$. Thus, $F_1$ and $F_2$ are isomorphic field extensions over $\Q_1$ (as their compositum $L(K_1)$ is Galois over $\Q_1$) with isomorphism $\theta$ such that $\theta$ is also an automorphism of $\Q_1(\sqrt{p}, \sqrt{p_1})$ that fixes $\Q_1$. We shall break this case into two subcases depending on modulo 8 conditions on $r$.

\noindent (1). When $r \equiv 3 \Mod{8}$: Since $p \equiv 1 \Mod{8}$ and $\left( \dfrac{p}{r} \right) = -1$, the prime ideals $\sqrt{2}\mathcal{O}_{\Q_1}$ and $r\mathcal{O}_{\Q_1}$ split in $\Q_1(\sqrt{p})$, where they can be factorized as $\ell_1\cdot\ell_2 $ and $\mr_1\cdot \mr_2$, respectively. Next, from \cite[Proposition 4.5]{L-S_arx} and Lemma \ref{r is inert}, we gather that for $i=1,2$, $\ell_i$ and $\mr_i$ remain inert in $\Q_1(\sqrt{p_1}, \sqrt{p})$. We denote these primes ideals in $\Q_1(\sqrt{p_1}, \sqrt{p})$ as $\overline{\ell_i}$ and $\overline{\mr_i}$ for $i = 1,2$. Given that $\#A(\Q_1(\sqrt{p}, \sqrt{p_1})) = 1$, the extensions $F_i/\Q_1(\sqrt{p}, \sqrt{p_1})$ must be ramified at some prime. Since the primes above 2 and $r$ are the only ones that are ramified, without loss of generality, we may assume that $\overline{\mr_1}$ is ramified in $F_1/\Q_1(\sqrt{p}, \sqrt{p_1})$. Let $\widetilde{\mr_1}$ be the prime above $\overline{\mr_1}$ in $F_1/\Q_1(\sqrt{p}, \sqrt{p_1})$. Then, $\theta(\widetilde{\mr_1})$ will be the prime above $\theta(\overline{\mr_1})$ in $F_2/\Q_1(\sqrt{p}, \sqrt{p_1})$. Here, $\theta(\overline{\mr_1})$ is the conjugate of $\overline{\mr_1}$ in $\Q_1(\sqrt{p}, \sqrt{p_1})$. But, $\mr_1$ is inert implies $\theta(\overline{\mr_1}) = \overline{\mr_1}$ in $\Q_1(\sqrt{p}, \sqrt{p_1})$. Therefore, $\theta(\widetilde{\mr_1})$ will be the prime above $\overline{\mr_1}$ in $F_2/\Q_1(\sqrt{p}, \sqrt{p_1})$. Since $F_1$ and $F_2$ are isomorphic, this implies that $\overline{\mr_1}$ is ramified in $F_2$ as well. Therefore, $\overline{\mr_1}$ is ramified in $F_1,F_2$, and $L(k_1)$, and consequently, it must be totally ramified in $L(K_1)/\Q_1(\sqrt{p_1}, \sqrt{p})$. This is not possible as $L(K_1)/L(k_1)$ is an unramified extension. On the similar lines, we can also show that the primes $\overline{\mr_2}, \overline{\ell_1}$, and $\overline{\ell_2}$ cannot be ramified in $F_1/\Q_1(\sqrt{p}, \sqrt{p_1})$, and this is a contradiction. 

\noindent(2). When $r \equiv 7 \Mod{8}$: For $i = 1,2$, $L(K_1)/\Q_1(\sqrt{p_i})$ is a Galois extension of degree 8. The subgroups of order 8 of $\mG$ can be $D_8, Q_8$, or $(2,4)$. We have $\Q_1(\sqrt{p_i}) \subseteq \Q_1(\sqrt{p_i}, \sqrt{r}), ~\Q_1(\sqrt{p_i}, \sqrt{p}) \subset L(K_1)$, where both ${\rm{Gal}}(L(K_1)/\Q_1(\sqrt{p_i}, \sqrt{r}))$ and ${\rm{Gal}}(L(K_1)/\Q_1(\sqrt{p_i}, \sqrt{p}))$ are of the type $(2,2)$. Consequently, ${\rm{Gal}}(L(K_1)/ \Q_1(\sqrt{p_i})) \cong D_8$ for $i = 1,2$, with $F_1$ and $F_2$ being isomorphic, non-Galois extensions of $\Q_1(\sqrt{p_i})$. 
We recall $r_1$ and $r_2$ defined in part (2) of Lemma \ref{r is inert} and fix $\left( \dfrac{p_1}{r_1} \right) = -1$ without loss of generality. Then, from part (2) of Lemma \ref{r is inert}, $r_1\mathcal{O}_{\Q_1}$ is inert in $\Q_1(\sqrt{p_1})/\Q_1$, and $r_2\mathcal{O}_{\Q_1}$ splits in $\Q_1(\sqrt{p_1})/\Q_1$. Let $\widetilde{r_1}$ be the prime ideal above $r_1\mathcal{O}_{\Q_1}$, and $r_2\mathcal{O}_{\Q_1(\sqrt{p_1})} = r_{21}\cdot r_{22}$ in $\Q_1(\sqrt{p_1})/\Q_1$. As $\Q_1(\sqrt{p_1}, \sqrt{p})/\Q_1$ is a biquadratic extension, we deduce that in the extension $\Q_1(\sqrt{p_1}, \sqrt{p})/\Q_1(\sqrt{p_1})$, we have the factorizations: $\widetilde{r_1} = \overline{r_{11}}\cdot \overline{r_{12}}$, $r_{21} = \overline{r_{21}}$, and $r_{22} = \overline{r_{22}}$. For each $i, j \in \{ 1,2\}$, $\overline{r_{ij}}$ is ramified in the extension $L(k_1)/\Q_1(\sqrt{p_1}, \sqrt{p})$. Since $L(K_1)/L(k_1)$ is an unramified extension, each $\overline{r_{ij}}$ has ramification index 2 in the biquadratic extension $L(K_1)/\Q_1(\sqrt{p_1}, \sqrt{p})$. If $F_2$ is the inertia field of $\overline{r_{21}}$, then $\overline{r_{21}}$ must be ramified in $F_1/\Q_1(\sqrt{p_1}, \sqrt{p})$. Following the arguments presented in the last part of the subcase $r \equiv 3 \Mod{8}$, we obtain that $\overline{r_{21}}$ is ramified in $F_2/\Q_1(\sqrt{p_1}, \sqrt{p})$, which is a contradiction. Therefore, $\mG \neq D_8 \curlyvee \Z/4\Z$.

\noindent{\bf Case 3}. $\mG =  D_8 \oplus \Z/2\Z$: Again, we break this case into subcases depending on the modulo 8 conditions on $r$.

\noindent (1). When $r \equiv 3 \Mod{8}$: The subgroups of order 8 of $\mG$ are of the type $(2,2,2), (2,4)$, and $D_8$. As $\#A(K_1) = 4$, in this case, $L(K_1)/k_1$ must be nonabelian and the Galois group must be isomorphic to $D_8$. We consider the extension $L(K_1)/k_1$. The group $D_8$ has exactly one cyclic subgroup of order 4, and here it corresponds to the extension $L(K_1)/K_1$. The quadratic subextensions of $k_1$ other than $K_1$ are $T_1= \Q_1(\sqrt{pr}, \sqrt{p_1})$, and $T_2 = \Q_1(\sqrt{pr}, \sqrt{p_2}) = \Q_1(\sqrt{p_1r}, \sqrt{p_2})$. From the structure of subgroups of $D_8$, and due to Galois correspondence, $L(K_1)/T_i$ ($i = 1,2$) is an unramified extension of type $(2,2)$. This implies ${\rm{rank}}_2A(T_i) \geq 2$. But this is a contradiction to Lemma \ref{rank T2} due to our initial assumption that $p = a^2 - 2b^2$ with $\left( \dfrac{a}{p} \right) = -1$. 

\noindent (2). When $r \equiv 7 \Mod{8}$: The group $D_8 \oplus \Z/2\Z$ has four subgroups isomorphic to $D_8$, two of the form $(2,2,2)$ and one subgroup of the type $(2,4)$. We have already shown that ${\rm{Gal}}(L(K_1)/\Q_1(\sqrt{p})) \cong (2,4)$ and from subcase (2) of the case $\mG = D_8 \curlyvee \Z/4\Z$, we note that ${\rm{Gal}}(L(K_1)/\Q_1(\sqrt{p_i})) \cong (2,2,2)$. Thus, ${\rm{Gal}}(L(K_1)/\Q_1(\sqrt{rp_i})) \cong D_8$ for $i = 1,2$. From Condition (\ref{Cond}), the ideal $p_2\mathcal{O}_{\Q_1}$ is inert in $\Q_1(\sqrt{r})/\Q_1$. Due to the discussion before Lemma \ref{rank T2}, $\left(\dfrac{a}{p}\right) = -1$ implies that $p_2\mathcal{O}_{\Q_1}$ is inert in $\Q_1(\sqrt{p_1})/\Q_1$ as well. As $\Q_1(\sqrt{r}, \sqrt{p_1})/\Q_1$ is a biquadratic extension, $p_2\mathcal{O}_{\Q_1}$ splits as the product $p_{21}\cdot p_{22}$ in $\Q_1(\sqrt{rp_1})/\Q_1$, and the ideals $p_{21}$ and $p_{22}$ remain inert in $\Q_1(\sqrt{r}, \sqrt{p_1})/\Q_1(\sqrt{rp_1})$. Let $\overline{p_{21}}$ and $\overline{p_{22}}$ be the prime ideals of $\Q_1(\sqrt{r}, \sqrt{p_1})$ above $p_{21}$ and $p_{22}$, respectively. As ${\rm{Gal}}(L(K_1)/\Q_1(\sqrt{rp_1})) \cong D_8$, the fields $H_1$ and $H_2$ defined earlier in this proof must be isomorphic, non-Galois extensions over $\Q_1(\sqrt{rp_1})$. Suppose that $H_2$ is the inertia field of $\overline{p_{21}}$ in the extension $L(K_1)/\Q_1(\sqrt{r}, \sqrt{p_1})$. With reasons similar to those discussed in both the subcases of Case 2, we obtain a contradiction and that way, $\mG \neq D_8 \oplus \Z/2\Z$.

\noindent We find that $\mG$ can be none of $Q_8 \oplus \Z/2\Z, D_8 \curlyvee \Z/4\Z$, and $D_8 \oplus \Z/2\Z$. Hence, we conclude that our assumption was wrong and $\#A(K_1) = 2$ if $p = a^2 - 2b^2$ with $\left( \dfrac{a}{p} \right) = -1$.
\end{proof}

\section{Concluding Remarks}
\noindent In Lemma \ref{A_(k0) = 2}, we saw how the decomposition of certain prime ideals has an effect on the order of the 2-class group. Based on a similar idea, we propose the following result:
\begin{propn}\label{2 is principal}
    Suppose $\ell_1$ and $\ell_2$ are prime ideals in $\Q(\sqrt{p})$ such that $2\mathcal{O}_{\Q(\sqrt{p})} = \ell_1\cdot \ell_2$. If $\ell_1$ and $\ell_2$ are principal, then $\#A(K_1) = 2$.
\end{propn}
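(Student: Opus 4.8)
The plan is to argue by contradiction. By Lemma~\ref{bound of A(K_n)} we already have $2 \le \#A(K_1) \le 4$, so it suffices to exclude $\#A(K_1)=4$; once this is done, $\#A(K_1)=2$ follows (equivalently, by Theorem~\ref{Thm2}, a non-trivial class of $A(\Q_1(\sqrt p))$ then capitulates in $K_1$). The hypothesis that $\ell_1$ and $\ell_2$ are principal in $\Q(\sqrt p)$ is to be read in the same spirit as Lemma~\ref{A_(k0) = 2}: it is a statement about how the rational prime $2$ decomposes (it splits into two \emph{principal} primes), and the idea is to feed this decomposition into the structure of $L(K_1)$.

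First I would recover the set-up from the proof of Theorem~\ref{Thm3}: assuming $\#A(K_1)=4$, the group $\mG := {\rm{Gal}}(L(K_1)/\Q_1)$ is a non-abelian group of order $16$ having at least seven quotients of order $2$, a subgroup of type $(2,4)$ (corresponding to ${\rm{Gal}}(L(K_1)/\Q_1(\sqrt p))$), and two subgroups of type $(2,2)$; by Remark~\ref{groups 16} the only possibilities left are $\mG \cong Q_8\oplus\Z/2\Z$, $\mG\cong D_8\curlyvee\Z/4\Z$, and $\mG\cong D_8\oplus\Z/2\Z$. Cases~1 and~2 of that proof exclude the first two using only how the primes above $2$ and $r$ decompose in $\Q_1(\sqrt p)$ together with Lemma~\ref{r is inert}, and not the arithmetic hypothesis on $p$; so the whole problem reduces, exactly as there, to excluding $\mG\cong D_8\oplus\Z/2\Z$, and this is where the principality of $\ell_1,\ell_2$ must enter. (When $\left(\frac{a}{p}\right)=-1$ this is already Theorem~\ref{Thm3}; the genuinely new content is the case $\left(\frac{a}{p}\right)=1$, where $\langle p_1\rangle$ splits in $\Q_1(\sqrt{p_2})$ and Lemma~\ref{rank T2} is unavailable.)

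To exclude $\mG\cong D_8\oplus\Z/2\Z$ I would reformulate the hypothesis: since the fundamental unit of $\Q(\sqrt p)$ has norm $-1$, ``$\ell_1,\ell_2$ principal'' is equivalent to ``$2=N_{\Q(\sqrt p)/\Q}(\alpha)$ for some $\alpha\in\Q(\sqrt p)$''. Inside $\Q_1(\sqrt p)=\Q(\sqrt 2,\sqrt p)$, where $2\,\mathcal O_{\Q_1(\sqrt p)}=\mathfrak l_1^2\mathfrak l_2^2$ and $(\sqrt 2)=\mathfrak l_1\mathfrak l_2$, this says $(\alpha)=\mathfrak l_1^{2}$ with $N_{\Q_1(\sqrt p)/\Q_1}(\alpha)=2$; from this one reads off whether $\mathfrak l_1$ is principal and, more to the point, how $\sqrt 2\,\mathcal O_{\Q_1}$ decomposes in the genus field $\Q_1(\sqrt p,\sqrt{p_1})$ of $\Q_1(\sqrt p)$ and in the fields $\Q_1(\sqrt{p_i})$ and $\Q_1(\sqrt{rp_i})$ occurring in the proof of Theorem~\ref{Thm3}. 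Treating $r\equiv 3\Mod 8$ and $r\equiv 7\Mod 8$ separately as there, I would then produce a contradiction by the same mechanism: either (i) exhibit a prime above $2$ which, by the reformulated hypothesis, is forced to ramify in two of the intermediate fields strictly between $\Q_1(\sqrt p,\sqrt{p_1})$ (resp.\ $\Q_1(\sqrt r,\sqrt{p_1})$) and $L(K_1)$, hence is totally ramified in $L(K_1)$ over that base, which is impossible since $L(K_1)/L(k_1)$ is unramified; or (ii) run a genus-formula computation (Theorem~\ref{genusfor}), in the spirit of Lemma~\ref{rank T2}, bounding ${\rm{rank}}_2 A(T_i)$ for one of the fields $T_i=\Q_1(\sqrt{pr},\sqrt{p_i})$, contradicting the unramified $(2,2)$-extension $L(K_1)/T_i$ imposed by the copy of $D_8$ inside $\mG$.

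The step I expect to be the main obstacle is precisely this translation. In Theorem~\ref{Thm3} the hypothesis directly asserts that a prime above $p$ is inert in a quadratic layer, whereas here it is a norm/principality condition on the primes above $2$; so the real work is converting ``$2$ is a norm from $\Q(\sqrt p)$'' into the exact decomposition behaviour of $\mathfrak l_1,\mathfrak l_2$ in the relevant quadratic and biquadratic subfields of $L(K_1)$ (and, for route (ii), into the exact value of the pertinent unit--norm index in the genus formula), and then checking that the resulting ramification or rank contradiction goes through uniformly in both residue classes of $r$ modulo $8$.
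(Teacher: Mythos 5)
Your reduction is set up sensibly, but the proof has a genuine gap exactly at the point where the hypothesis must do its work. Having assumed $\#A(K_1)=4$ and pared the possibilities for $\mG={\rm{Gal}}(L(K_1)/\Q_1)$ down to $D_8\oplus\Z/2\Z$ (the other two exclusions indeed do not use the arithmetic hypothesis), you never actually exclude it: you only announce that the principality of $\ell_1,\ell_2$ (equivalently, that $2$ is a norm from $\Q(\sqrt p)$) should be ``translated'' into decomposition data for the primes above $2$ in the subfields of $L(K_1)$, and that then either a ramification collision or a genus-formula bound replacing Lemma \ref{rank T2} will produce a contradiction. Neither route is exhibited, and in the genuinely new situation $\left(\frac{a}{p}\right)=1$, where $\langle p_1\rangle$ splits in $\Q_1(\sqrt{p_2})$ and Lemma \ref{rank T2} is unavailable, it is far from clear that the primes above $2$ can play the role that the inert primes above $r$ (resp.\ $p_2$) play in Case 3 of the proof of Theorem \ref{Thm3}: that mechanism rests on an inert prime of the base whose extensions are permuted by the isomorphism of the two non-normal intermediate fields, a configuration you have not verified for the (ramified, in $k_1/\Q_1$) primes above $2$, nor have you computed the unit-norm index needed for route (ii). As written, the proposal establishes nothing beyond what Theorem \ref{Thm3} already gives.

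For comparison, the paper's proof is direct and much more elementary; it never assumes $\#A(K_1)=4$ and makes no use of the order-16 classification. It tracks the primes above $2$ up the tower: the prime $\mathfrak{l}$ of $k$ above $2$ splits in $K/k$, hence is principal in $k$; the hypothesis that $\ell_1,\ell_2$ are principal in $\Q(\sqrt p)$, together with $A(K)=\{id\}$ (Lemma \ref{rank A(K1)}), forces the primes $L_1,L_2$ of $K$ above $2$ to be principal; their square roots $\widetilde{L_i}$ in the ramified extension $K_1/K$ therefore have class of order at most $2$; and by the Fukuda--Komatsu/Nishino computations the prime of $k_1$ above $\mathfrak{l}$ is inert in $k_1(\sqrt{p_1})/k_1$, so $\widetilde{L_i}$ is inert in the unramified quadratic extension $K_1(\sqrt{p_1})/K_1$, hence non-principal and outside the principal genus. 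The Nakayama-type argument of Lemma \ref{A_(k0) = 2}, applied to $K_1(\sqrt{p_1})/K_1$ with $A(K_1)$ cyclic, then yields $\#A(K_1)=2$ outright. In other words, the missing ``translation'' in your plan is precisely what the paper does, but it is used affirmatively to pin down a generator of $A(K_1)$ of order $2$, not defensively inside a case analysis of ${\rm{Gal}}(L(K_1)/\Q_1)$; if you want to rescue your approach you would need to supply, in both residue classes of $r$ modulo $8$, the explicit splitting/ramification behaviour of the primes above $2$ in the fields $F_i$, $H_i$, $T_i$, which is substantially more work than the paper's argument.
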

\begin{proof}
    Since $r \equiv 3 \Mod{4}$ and $p \equiv 1 \Mod{8}$, $2\Z$ is ramified in $k/\Q$ and splits in $\Q(\sqrt{p})/\Q$. Let $\mathfrak{l}$ be the prime ideal in $k$ above $2\Z$. Then, $\mathfrak{l}$ splits in $K/k$, and hence must be principal in $k$ as $K$ is the 2-Hilbert class field of $k$. Thus, there exists $\ell \in k$ such that $\mathfrak{l} = \ell\mathcal{O}_k$, and $(\ell\mathcal{O}_k)^2 = 2\Z$. Let $\ell_1$ and $\ell_2$ be the prime ideals of $\Q(\sqrt{p})$ such that $2\mathcal{O}_{\Q(\sqrt{p})} = \ell_1\cdot \ell_2$. In this scenario, there exist prime ideals $L_1, L_2$ in $K$ such that $L_i ^2 = \ell_i\mathcal{O}_K$ for $i = 1,2$, and $L_1\cdot L_2 = \ell\mathcal{O}_K$. From our assumption, $[L_1]^2 = [L_2]^2 = id$. Therefore, by Lemma \ref{rank A(K1)}, $[L_i] \in A(K) = \{ id \}$, which further implies that $L_1$ and $L_2$ are principal in $K$. Since $K_1/K$ is ramified at primes above 2, there exist ideals $\widetilde{L_i}$ such that $(\widetilde{L_i})^2 = L_i\mathcal{O}_{K_1}$ for $i = 1,2$. Furthermore, the order of $[\widetilde{L_i}]$ is at most 2 as $L_i$ is principal for each $i$. If $\tilde{\mathfrak{l}}$ is the prime above $\mathfrak{l}$ in $k_1/k$, then from the proofs of Theorem 2.2 of \cite{fukuda-komatsu} and Theorem 2 of \cite{nishino}, $\tilde{\mathfrak{l}}$ remains inert in $k_1(\sqrt{p_1})/k_1$. Thus, the ideals $\widetilde{L_1}$ and $\widetilde{L_2}$ remain inert in the unramified extension $K_1(\sqrt{p_1})/K_1$, and thus are not principal in $K_1$. Applying the arguments presented in Lemma \ref{A_(k0) = 2} on the extension $K_1(\sqrt{p_1})/K_1$, we conclude $\#A(K_1) = 2$. 
\end{proof}

Theorem \ref{Thm3} and Proposition \ref{2 is principal} highlight the importance of finding and studying solutions to certain diophantine equations. The prime ideal $\ell_1$ is principal in $\Q(\sqrt{p})$ if and only if there exists a solution in integers to the equation $x^2 - py^2 = \pm 8$. For primes $p \equiv 9 \Mod{16}$ and $\left(\dfrac{2}{p}\right)_4 = -1$ such that $p \leq 10,000$, using PariGP, it can be seen that the prime ideals $\ell_i$ are principal in $\Q(\sqrt{p})$ except for the values $p =$ 761, 1129, 2153, 2713, 2777, 4441, 4649, 4729, 4889, 5273, 5417, 7673, 9049, 9833.  

\noindent It can also be verified that for all $p \leq 10,000$ of the form $p \equiv  9 \Mod{16}$ and $\left( \dfrac{2}{p} \right)_4 = -1$, if $a$ and $b$ are integers such that $a^2 - 2b^2 = p$, then indeed $\left( \dfrac{a}{p} \right) = -1$. This observation leads to the following question:

\noindent{\bf Question}: Let $p$ be a prime such that $p \equiv 9 \Mod{16}$ and $\left(\dfrac{2}{p}\right)_4 = -1$. If $a$ and $b$ are integers such that $a^2 - 2b^2 = p$, is it always true that $\left( \dfrac{a}{p} \right) = -1$?

\noindent An affirmative answer to this question will enable us to prove $\#A(K_1) = 2$ unconditionally (from Theorem \ref{Thm3}). Another consequence would be the capitulation of a nontrivial ideal class in the extension $K_1/\Q_1(\sqrt{p})$ (due to Theorem \ref{Thm2}). Although this condition is sufficient to ensure that $\#A(K_1) = \#A(\Q_1(\sqrt{p})) = 2$, there can still be an increase in the order of $A(K_2)$. To illustrate this, we enlist the orders of $A(K_2)$ and $A(\Q_2(\sqrt{p}))$ for small values of $r$ and $p$ satisfying Condition (\ref{Cond}) in the following tables. 

\begin{tabular}{c c}
Table 1: $r = 3$ & \hspace{4cm}  Table 2: $r=7$\\   
\aboverulesep=0ex 
\belowrulesep=0ex 
\begin{tabular}[t]{c|c|c}
\toprule
\rule{0pt}{1.1EM}
$p$ &  $\#A\left(\Q_2(\sqrt{p})\right)$ & $\#A(K_2)$ \\  
\midrule
\rule{0pt}{1.1EM}
41       &  4    & 4   \\
137      &  4    & 4   \\
521      &  2    & 4   \\
569      &  2    & 4   \\
761      &  4    & 4   \\
809      &  2    & 4   \\
857      &  2    & 4   \\
953      &  2    & 4   \\
\bottomrule
\end{tabular} &

\hspace{4cm}

\aboverulesep=0ex 
\belowrulesep=0ex 
\begin{tabular}[t]{c|c|c}
\toprule
\rule{0pt}{1.1EM}
$p$ &  $\#A\left(\Q_2(\sqrt{p})\right)$ & $\#A(K_2)$ \\  
\midrule
\rule{0pt}{1.1EM}
41       &  4    & 4   \\
313      &  4    & 4   \\
409      &  2    & 4   \\
521      &  2    & 4   \\
761      &  4    & 4   \\
857      &  2    & 4   \\
\bottomrule
\end{tabular}\\
\end{tabular}

%

We note that both the possibilities $\#A(K_2) = \#A(\Q_2(\sqrt{p}))$ and $\#A(K_2) = 2\cdot\#A(\Q_2(\sqrt{p}))$ can occur. Even for $r= 11, 19$, and $23$, it can be seen that $\#A(K_2) = 4$ for every $p \leq 1000$ that satisfies Condition (\ref{Cond}). Therefore, it would also be interesting to see what factors govern the growth of $\#A(K_2)$. 

{\bf Acknowledgements.} The authors warmly thank Prof. L. C. Washington for his guidance on the computational part of the article. The authors are grateful to Indian Institute of Technology Guwahati for providing excellent facilities to carry out this research.

\end{document}